\newcommand*{\halfway}{0.5*\pgfdecoratedpathlength+.5*3pt}
\tikzset{every loop/.style={min distance=10mm,looseness=10}}
\theoremstyle{plain}
\newtheorem{thm}{Theorem} 
\newtheorem{prop}[thm]{Proposition}
\newtheorem{lem}[thm]{Lemma}
\numberwithin{thm}{section} 
\theoremstyle{definition}
\newtheorem{defi}[thm]{Definition}
\newtheorem{conj}{Conjecture}
\newtheorem{ex}[equation]{Example}
\newtheorem{rem}[thm]{Remark}
\newtheorem{assump}{Assumption}
\let\Im\relax
\DeclareMathOperator{\Im}{Im}
\let\epsilon\relax
\DeclareMathOperator{\epsilon}{\varepsilon}
\newcommand{\MyMathOperators}[1]{\@for\op:=#1\do{
	\expandafter\edef\csname\op\endcsname{\noexpand\mathop{\noexpand\mathrm{\op}}\nolimits}
}}
\renewcommand{\bar}[1]{\overline{#1}}
\renewcommand{\tilde}[1]{\widetilde{#1}}
\renewcommand{\hat}[1]{\widehat{#1}}
\DeclareSymbolFont{cyrletters}{OT2}{wncyr}{m}{n}
\DeclareMathSymbol{\Sha}{\mathalpha}{cyrletters}{"58}
\DeclareMathOperator{\inc}{inc}
\DeclareMathOperator{\gal}{Gal}
\DeclareMathOperator{\indu}{Ind}
\DeclareMathOperator{\cl}{Cl}
\DeclareMathOperator{\auto}{Aut}
\DeclareMathOperator{\tra}{Tr}
\DeclareMathOperator{\autom}{Aut}
\DeclareMathOperator{\spec}{Spec}
\newcommand{\Z}{\mathbb{Z}}
\newcommand{\Q}{\mathbb{Q}}
\newcommand{\F}{\mathbb{F}}
\newcommand{\E}{\mathbb{E}}
\title{Iwasawa theory for weighted graphs}
\author{Taiga Adachi}
\address{Taiga Adachi \newline JOINT GRADUATE SCHOOL OF MATHEMATICS FOR INNOVATION, KYUSHU UNIVERSITY, MOTOOKA 744, NISHI-KU FUKUOKA 819-0395, JAPAN}
\email{t.adachi1729@gmail.com}
\author{Kosuke Mizuno}
\address{Kosuke Mizuno \newline GRADUATE SCHOOL OF MATHEMATICS, NAGOYA UNIVERSITY, FURO-CHO, CHIKUSA-KU, NAGOYA, 464-8602, JAPAN}
\email{kosuke.mizuno.c1@math.nagoya-u.ac.jp}
\author{Sohei Tateno}
\address{Sohei Tateno \newline GRADUATE SCHOOL OF MATHEMATICS, NAGOYA UNIVERSITY, FURO-CHO, CHIKUSA-KU, NAGOYA, 464-8602, JAPAN}
\email{inu.kaimashita@gmail.com}
\date{}
\keywords{Iwasawa theory, graph theory, Iwasawa's class number formula, Kida's formula, weighted graphs, quantum walks}
\subjclass[2020]{Primary 11R23; Secondary 05C22, 05C25, 81P68}
\begin{document}

\begin{abstract}
   Let $p$ be a prime number and let $d$ be a positive integer. In this paper, we generalize Iwasawa theory for graphs initiated by Gonet and Valli\`{e}res to weighted graphs. In particular, we prove an analogue of Iwasawa's class number formula and that of Kida's formula for compatible systems of $(\Z/p^n\Z)^d$-covers of weighted graphs. We also provide numerical examples of characteristic elements and Iwasawa invariants. At the end of this paper, we give an application of the ideas of Iwasawa theory to the theory of discrete-time quantum walks in graphs.
\end{abstract}

\maketitle
\tableofcontents
\section{Introduction}
\noindent Let $p$ be a fixed prime number. We denote the completion of $\Q$ equipped with the $p$-adic absolute value by $\Q_p$ and its valuation ring by $\Z_p$. Fix an algebraic closure $\bar{\Q}_p$ of $\Q_p$. Let $v_p$ be the $p$-adic valuation of $\Q_p$ normalized so that $v_p(p)=1$, and we extend this to the valuation of $\bar{\Q}_p$ uniquely. For an integer $a$, $v_p(a)$ is the number of times that $a$ is divisible by $p$. In number theory, for a positive integer $d$ and a number field $K$, celebrated Iwasawa's class number formula and its generalization by Cuoco--Monsky \cite{CM81} describe the asymptotic behavior of class numbers in a tower of subfields of a $\Z_p^d$-extension $K_{\infty}\slash K$
\begin{align}
K=K_0\subset K_1\subset K_2\subset \cdots \subset K_n\subset \cdots \subset K_{\infty}\label{seq}
\end{align}
such that $K_n\slash K$ is the subextension corresponding to the subgroup $(p^n\Z)^d$  for every $n\in\Z_{\geq 0}$. Greenberg conjectured the following (See \cite[\S7]{CM81}, \cite[\S1]{DV23}).
\begin{conj}[Greenberg's class number conjecture]\label{conj:Greenberg}
For a tower \eqref{seq}, there exists a polynomial $f(U,V)\in \Q[U,V]$ with total degree $\deg f\leq d$ and degree in $V$ $\deg_Vf\leq 1$ such that, for all $n\gg0$,
\begin{align}\label{eq:cnf}
v_p(\#\cl(K_n))=f(p^n,n),
\end{align}
where $\cl(K_n)$ is the ideal class group of $K_n$. Equivalently, there exist non-negative integers $\mu,\lambda$ and $\mu_1,\ldots,\mu_{d-1},\lambda_1,\ldots,\lambda_{d-1},\nu\in\Q$ such that, for all $n\gg 0$,
\begin{align}\label{eq:the_equality}
v_p(\#\cl(K_n))=(\mu p^n+\lambda n)p^{(d-1)n}+\left(\sum_{
i=1}^{d-1}(\mu_ip^n+\lambda_in)p^{(d-i-1)n}\right)+\nu
\end{align}
holds.

\end{conj}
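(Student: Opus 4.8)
The plan is to translate the question about class numbers into the specialization theory of a single Iwasawa module. Write $\Gamma=\gal(K_\infty/K)\cong\Z_p^d$, fix topological generators to identify $\Lambda:=\Z_p[[\Gamma]]$ with $\Z_p[[T_1,\dots,T_d]]$, let $A_n$ be the $p$-part of $\cl(K_n)$, and form $X_\infty=\varprojlim_n A_n$ with respect to the norm maps. First I would equip $X_\infty$ with its $\Lambda$-module structure and record that it is finitely generated over $\Lambda$, the crucial additional input being that it is $\Lambda$-\emph{torsion}; this rests on the finiteness of each $\#A_n$ together with the fact that only finitely many primes ramify in $K_\infty/K$, which prevents $X_\infty$ from carrying a free $\Lambda$-summand. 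Setting $\omega_n^{(i)}=(1+T_i)^{p^n}-1$ and $I_n=(\omega_n^{(1)},\dots,\omega_n^{(d)})$, the group ring of $\Gamma/\Gamma^{p^n}\cong(\Z/p^n\Z)^d$ is $\Lambda/I_n$, and an Iwasawa-style descent (control) argument identifies $A_n$ with the coinvariants $X_\infty/I_nX_\infty$ up to an error that is bounded independently of $n$.

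The second step is to compute the $p$-adic length of these coinvariants through the structure theory of $\Lambda$. By the structure theorem there is a pseudo-isomorphism $X_\infty\sim E:=\bigoplus_j\Lambda/(g_j)$ whose kernel and cokernel are pseudo-null, i.e.\ annihilated by an ideal of height at least two. For the elementary module $E$ one computes the length $\ell_{\Z_p}(E/I_nE)$ by the Cuoco--Monsky method: each cyclic piece $\Lambda/(g_j)$ is analyzed through a Weierstrass-type factorization of $g_j$, and counting both the $\Z_p$-corank and the $p$-power torsion of $E/I_nE$ produces a leading term $(\mu p^n+\lambda n)p^{(d-1)n}$, with $\mu,\lambda$ determined by the characteristic ideal $(\prod_jg_j)$, together with the subordinate terms indexed by $i$ in \eqref{eq:the_equality}. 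This recovers the predicted shape as an asymptotic statement.

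The hard part, and precisely the point at which the number-field statement remains conjectural, is the passage from this asymptotic to an \emph{exact} polynomial identity valid for all $n\gg0$. When $d=1$ a pseudo-null module is finite, so it contributes only a bounded constant and Iwasawa's original argument closes the gap on the nose; but for $d\ge2$ a pseudo-null $\Lambda$-module can be infinite, and there is no general reason that its coinvariants $M/I_nM$ are eventually given by a polynomial in $p^n$ and $n$ of the stated bidegree. Thus the obstacle is to control the pseudo-null contribution to $X_\infty/I_nX_\infty$ well enough to convert the Cuoco--Monsky asymptotic into the exact formula \eqref{eq:cnf}; absent such control, for instance a vanishing or a precise stabilization of the pseudo-null part, the polynomiality cannot be deduced from the structure theorem alone. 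The remaining clause of the statement, the equivalence of \eqref{eq:cnf} and \eqref{eq:the_equality}, is then immediate by expanding $f(p^n,n)$ in descending powers of $p^n$ and matching coefficients.
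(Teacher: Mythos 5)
This statement is labeled a \emph{conjecture} in the paper, and the paper offers no proof of it: for $d=1$ it is Iwasawa's class number formula, for $d\geq 2$ only the weaker asymptotic $v_p(\#\cl(K_n))=(\mu p^n+\lambda n)p^{(d-1)n}+O(p^{(d-1)n})$ of Cuoco--Monsky \cite{CM81} is known, and the exact polynomial identity \eqref{eq:cnf} is open. So there is no paper proof to compare against, and your writeup is, appropriately, not a proof either: you outline the standard strategy (inverse limit $X_\infty$ of the $p$-parts $A_n$, torsionness over $\Lambda=\Z_p[[T_1,\ldots,T_d]]$, descent to coinvariants, structure theorem, Cuoco--Monsky counting) and then correctly identify the obstruction, namely that for $d\geq 2$ a pseudo-null $\Lambda$-module need not be finite, so the structure theorem determines $X_\infty$ only up to an error whose coinvariants are not known to be eventually polynomial in $(p^n,n)$ of the required bidegree. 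Your closing remark, that the equivalence of \eqref{eq:cnf} with the displayed expansion is immediate by writing $f(p^n,n)$ in descending powers of $p^n$, is correct and is the only part of the statement that admits a short argument.

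One step in your sketch is asserted too strongly, and it hides a second, independent gap: you claim the descent identifies $A_n$ with $X_\infty/I_nX_\infty$ ``up to an error that is bounded independently of $n$.'' For $d=1$ (with suitable ramification hypotheses) boundedness holds and Iwasawa's argument closes; but for $d\geq 2$ this is not known and is false in general --- Cuoco--Monsky control the discrepancy only to $O(p^{(d-1)n})$, which is of exactly the same order as the $\lambda$-term you are trying to extract. So even granting perfect control of the pseudo-null part, your descent step would still not yield \eqref{eq:cnf}; both the control error and the pseudo-null contribution are unbounded obstructions, and your proposal concedes only the latter. It is instructive to contrast this with what the paper actually proves: in the graph analogue (Theorem \ref{thm:Iwasawa_Type}), the quantity $v_p(\kappa_n^W)$ is expressed \emph{exactly} as a finite sum of valuations of special values of one explicit power series $Q^W_{X,\alpha}$ via the recurrence of Theorem \ref{thm:recurrence_formula}, with no control theorem and no structure theorem intervening; exact polynomiality for all $n\gg 0$ then follows from Monsky's theorem as packaged in Theorem \ref{thm:multi_class_number}. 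That absence of the two error sources you would need to tame is precisely why the graph statement is a theorem while the number-field statement you attempted remains a conjecture.
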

\noindent Conjecture \ref{conj:Greenberg} coincides with Iwasawa's class number formula in the case of $d=1$ and is a refinement of \cite[Theorem 1]{CM81} in the case of $d\geq 2$. These (unique) $\mu,\lambda\in\Z_{\geq0}$ (and $\mu_1,\cdots,\mu_d,\lambda_1,\cdots,\lambda_{d-1}$) are called (generalized) Iwasawa invariants. The analogue of Conjecture \ref{conj:Greenberg} for $\Z_p^d$-towers of function fields was proved by Wang \cite{Wan19} and that for $\Z_p^d$-towers branched along a link in a homology $3$-sphere was proved by Ueki and the third author \cite{TU24}.

 Recently, Gonet and Valli\`{e}res initiated Iwasawa theory of graphs independently in \cite{Gon22} and \cite{Val21}. DuBose--Valli\`{e}res \cite{DV23} and Kleine--M\"{u}ller \cite{KM24} independently generalized it to $\Z_p^d$-situation. In Iwasawa theory of graphs, we consider a (unramified) $\Z_p^d$-tower over a graph $X$ as a graph-theoretic analogue of $\Z_p^d$-tower over a number field \eqref{seq}
\begin{align}
X=X_0\leftarrow X_1\leftarrow X_2\leftarrow \cdots \leftarrow X_n\leftarrow \cdots \label{graph_seq}
\end{align}
such that, for every $n\geq 0$, $X_n\slash X$ is the Galois covering with $\gal(X_n\slash X)\cong (\Z\slash p^n\Z)^d$ (We remark that, only recently, Gambheera--Valli\`{e}res initiated Iwasawa theory for $``$ramified$"$ $\Z_p$-towers in \cite{GV24}). For a graph $X$, the asymptotic behavior of the number of the spanning trees of $X_n$ in a $\Z_p^d$-tower \eqref{graph_seq} is controlled by Iwasawa invariants, which can be regarded as a graph-theoretic analogue of the class number of $K_n$ in a $\Z_p^d$-tower \eqref{seq}. In particular, DuBose--Valli\`{e}res \cite[Theorem 6.2]{DV23} and Kleine--M\"{u}ller \cite[Theorem 4.3]{KM24} showed that the graph-analogue of Conjecture \ref{conj:Greenberg} holds.

Gonet's construction in \cite{Gon22} is given in a parallel manner to the classical Iwasawa theory based on the theory of Iwasawa algebra and its modules, and the method of McGown--Valli\`{e}res (see \cite{Val21}, \cite{MV23}, and \cite{MV24}) is strongly based on the three-term determinant expressions of the Artin--Ihara $L$-functions of graphs. Since its study has been commenced, a lot of mathematicians including Dion, DuBose, Gambheera, Kataoka, Kleine, Lei, McGown, M\"{u}ller, and Ray have developed the theory, mostly following the development of classical Iwasawa theory.

On the other hand, Ishikawa, Hashimoto, Konno, Mizuno, Morita, Sato, and others have been making efforts to generalize the theory of Artin--Ihara $L$-functions of graphs, studied by Terras and Stark,  to that of weighted graphs. One of their motivation to do so is to utilize the theory of Artin--Ihara $L$-function for the study on quantum walks and random walks.

Since the proofs in the Iwasawa theory of graphs developed by Valli\`{e}res and others are profoundly based on the three-term determinant expressions of the Artin--Ihara $L$-functions, it is natural to deem that, if there exists a theory for weighted graphs including the three-term determinant formula, then we can generalize the Iwasawa theory of graphs to weighted graphs.

In this paper, by combining Iwasawa theory of graphs and the theory of Artin--Ihara $L$-function for weighted graphs, we construct the Iwasawa theory of weighted graphs. Our main theorems are the followings. First, we estimate the growth of \textit{weighted complexities}, which are generalizations of the numbers of spanning trees for weighted graphs.

\begin{thm}[Theorem \ref{thm:Iwasawa_Type}]\label{MainThm1}
Let $\{X_n\}_{n\geq 0}$ be a $\Z_p^d$-tower \eqref{graph_seq} over a weighted symmetric digraph $X$. Assume that all $X_n$ are connected. Then there exists $f(U,V)\in \Q[U,V]$ with total degree $\deg f\leq d$ and $\deg_Vf\leq 1$ such that, for all $n\gg0$,
\begin{align}
v_p(\kappa^W_n)=f(p^n,n),
\end{align}
where $\kappa_n^W$ is the weighted complexity of $X_n$. Equivalently, there exist $\mu\in \Q,\ \lambda\in \Z_{\geq 0}$ and $\mu_1,\ldots,\mu_{d-1},\lambda_1,\ldots,\lambda_{d-1},\nu\in\Q$ such that, for all $n\gg 0$,
\begin{align}\label{eq:the_equality}
v_p(\kappa_n^W)=(\mu p^n+\lambda n)p^{(d-1)n}+\left(\sum_{
i=1}^{d-1}(\mu_ip^n+\lambda_in)p^{(d-i-1)n}\right)+\nu
\end{align}
holds.
\end{thm}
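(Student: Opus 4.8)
The plan is to reduce the statement to an analysis of special values of a multivariable power series over the Iwasawa algebra $\Lambda=\Z_p[[T_1,\dots,T_d]]$, following the strategy used by DuBose--Valli\`eres \cite{DV23} and Kleine--M\"uller \cite{KM24} for the number of spanning trees, but carrying the weighted complexity $\kappa_n^W$ through in its place. First I would invoke a weighted matrix-tree theorem to express $\kappa_n^W$, up to an explicit elementary factor, as a determinant of a reduced weighted Laplacian whose entries lie in the group ring $\Z[\gal(X_n\slash X)]\cong\Z[(\Z\slash p^n\Z)^d]$, so that the entire tower is governed by a single equivariant operator on the cover $X_\infty$.

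Next, using the three-term determinant formula for the Artin--Ihara $L$-function of weighted graphs, I would rewrite this determinant as the value at $u=1$ of a weighted Ihara-type determinant $\det(I-uA_W+u^2Q_W)$, where $A_W$ and $Q_W$ are the weighted adjacency and valency operators acting equivariantly on $X_\infty$. Decomposing the group ring over the characters $\chi$ of $(\Z\slash p^n\Z)^d$ then produces a factorization $\kappa_n^W=(\text{explicit factor})\cdot\prod_{\chi}h_\chi$, in which each $h_\chi$ is the value at $u=1$ of the $\chi$-twisted three-term determinant (with a leading-coefficient correction at the trivial character, where the untwisted determinant vanishes).

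I would then package these twisted determinants into a single element $g(T_1,\dots,T_d)\in\Lambda$ by sending the topological generators $\gamma_i$ of $\gal(X_\infty\slash X)$ to $1+T_i$, so that $h_\chi=g(\zeta_1-1,\dots,\zeta_d-1)$ for the $p^n$-th roots of unity $\zeta_i=\chi(\gamma_i)$. Hence $v_p(\kappa_n^W)$ equals, up to the contribution of the explicit prefactor, the sum of $v_p\bigl(g(\zeta-1)\bigr)$ over all characters $\chi$ of order dividing $p^n$. Applying the Cuoco--Monsky analysis of the growth of such character sums as in \cite{CM81}, in the graph-theoretic form of \cite[Theorem~6.2]{DV23} and \cite[Theorem~4.3]{KM24}, then yields the asserted polynomial $f(p^n,n)$, with $\lambda$ recording the relevant order of vanishing of $g$ and $\mu$ measuring its $p$-adic size.

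The main obstacle is the second step: one must verify that the weighted three-term determinant formula genuinely yields a well-defined element of $\Lambda$ whose special value at $u=1$ recovers $\kappa_n^W$, and one must control the arithmetic of the weights. Since the weights need not be $p$-adic units, the leading invariant can be a rational number rather than a non-negative integer; confirming that this non-integrality is confined to $\mu\in\Q$ and to the explicit prefactor---while the vanishing order $\lambda\in\Z_{\geq0}$ continues to count zeros of $g$ just as in the classical case---is the delicate point that separates the weighted setting from the unweighted one treated in \cite{DV23} and \cite{KM24}.
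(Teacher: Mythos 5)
Your proposal follows essentially the same route as the paper: the weighted matrix-tree theorem and a character decomposition of the equivariant weighted Laplacian give the factorization $\kappa_n^W=\frac{\kappa_0^W}{p^{dn}}\prod_{\psi\neq 1}h_X(\psi,1,\alpha_n)$ (the paper's Theorem \ref{thm:recurrence_formula}), the twisted three-term determinants are interpolated by a single series $Q_{X,\alpha}^W\in\Lambda_d\otimes_{\mathcal{O}}K$ via $\gamma_i\mapsto 1+T_i$, and the asymptotics come from a Cuoco--Monsky argument, exactly as you describe. The two delicate points you flag at the end are precisely where the paper does its real work: the factorization has to be reproved without the tacit hypothesis (total weight $\neq$ number of vertices) under which the Mizuno--Sato version was stated, and the non-unit weights force extending the Cuoco--Monsky theory from $\Z_p\llbracket T_1,\ldots,T_d\rrbracket$ to $\mathcal{O}\llbracket T_1,\ldots,T_d\rrbracket\otimes_{\mathcal{O}}K$, with $\mu\in\Q$ normalized by the ramification index while $\lambda\in\Z_{\geq 0}$ remains a count of zeros along the height-one primes $(\overline{\sigma-1})$.
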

We can regard the works of Gonet \cite[Theorem 1.1]{Gon22}, McGown--Valli\`{e}res \cite[Theorem 6.1]{MV24}, DuBose--Valli\`{e}res \cite[Theorem 6.2]{DV23}, and Kleine--M\"{u}ller \cite[Theorem 4.3]{KM24} as Theorem \ref{MainThm1} in the case of trivial weights. 

The first application of our main theorems is to extract information on signed graphs. By definition, the weighted complexity of a signed graph indicates
\[
(\mbox{the number of positive spanning trees})-(\mbox{the number of negative spanning trees}).
\]

Also, there is an analogue of Riemann--Hurwitz formula in Iwasawa theory of graphs \cite[Theorem 4.1]{RV22}, \cite[Theorem 1.1]{Kat24-2}. Riemann--Hurwitz formula describes the balance among genera, ramification indices, and extension degrees in a cover of Riemann surfaces. On the other hand, Kida's formula in classical Iwasawa theory describes the balance among $\lambda$-invariants, ramification indices, and extension degrees between a pair of $\Z_p$-extensions over number fields. The analogue of Kida's formula for graphs by \cite{RV22} describes the balance between $\lambda$-invariants in Theorem \ref{MainThm1} and covering degrees. Kataoka \cite[Theorem 1.1]{Kat24-2} generalized the result of Ray--Valli\`{e}res to ramified $\Z_p$-towers.

In this paper, we also prove an analogue of Kida's formula for weighted graphs. While Kida's formula is basically a formula for $\Z_p$-towers, we consider $\Z_p^d$-towers instead.

\begin{thm}[Theorem \ref{thm:weighted_kida_formula}]\label{MainThm2}
Let $\{X_n\}_{n\geq 0}$ be a $\Z_p^d$-tower over a weighted symmetric digraph $X$. Let $Y\slash X$ be a Galois cover of weighted symmetric digraphs of degree $p$-power. Let $\{Y_n\}_{n\geq 0}$ be a $\Z_p^d$-tower over $Y$ compatible with $\{X_n\}_{n\geq 0}$. Let $\mu(X), \mu(Y)$ denote the Iwasawa $\mu$-invariants of $\{X_n\}, \{Y_n\}$ and let $\lambda(X), \lambda(Y)$ denote the Iwasawa $\lambda$-invariants of $\{X_n\}, \{Y_n\}$ respectively. Under the proper assumptions (see Theorem \ref{thm:weighted_kida_formula} for details), we have that $\mu(X)=0$ if and only if $\mu(Y)=0$.
 Moreover, if this equivalent condition is satisfied, then
\begin{align} 
\lambda(Y)=\begin{cases}
[Y:X]\lambda(X)&(\text{if }d\geq 2),\\
[Y:X](\lambda(X)+1)-1&(\text{if }d=1),
\end{cases}
\end{align}
where $[Y:X]$ is the degree of the cover $Y/X$.
\end{thm}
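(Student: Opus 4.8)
The plan is to reduce the weighted Kida formula to an Iwasawa-theoretic statement about the behavior of $\lambda$-invariants under a degree-$p$ cover, mirroring the classical strategy of Kida and Iwasawa and the graph-theoretic argument of Ray--Valli\`{e}res. Because Theorem~\ref{MainThm1} identifies $\lambda(X)$ and $\mu(X)$ as the Iwasawa invariants governing the $p$-adic valuation of the weighted complexity $\kappa_n^W$, the core object of study is the characteristic element (the weighted analogue of the Artin--Ihara $L$-function, or equivalently the determinant of the deformed weighted Laplacian/edge operator) whose Newton polygon encodes $\mu$ and $\lambda$. The first step is therefore to express both $\lambda(X)$ and $\lambda(Y)$ in terms of these characteristic elements over the relevant Iwasawa algebra $\Z_p[[T_1,\ldots,T_d]]$ (or its fraction field), so that the comparison becomes a statement about how the characteristic element transforms under the finite cover $Y/X$.

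\medskip

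Next I would exploit the compatibility of the two towers $\{Y_n\}$ and $\{X_n\}$ together with the Galois structure of $Y/X$. Since $Y/X$ has $p$-power degree and the towers are compatible, there is a tower $\{Y_n\}$ sitting over $\{X_n\}$ in a way that is $\gal(Y/X)$-equivariant, and the weighted complexity of $Y_n$ should factor through a product (or norm-type) relation over the characters of $\gal(Y/X)$. Concretely, I would decompose the relevant determinant according to the representations of the finite $p$-group $G=\gal(Y/X)$, writing $\kappa_n^W(Y)$ as a product over isotypic components, exactly as the three-term determinant formula for Artin--Ihara $L$-functions is used in the unweighted case. The $\mu=0$ equivalence then follows because the total $\mu$-invariant is additive over these components and a $p$-power cover cannot create or destroy $\mu=0$ on any single component when the weights are $p$-adic units (this is where the "proper assumptions" on the weights enter); I expect the hypotheses to guarantee that the weighted Laplacian stays integral and its reduction mod $p$ is controlled.

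\medskip

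With $\mu(X)=\mu(Y)=0$ in hand, the $\lambda$-comparison becomes a computation of the sum of the $\lambda$-invariants of the isotypic pieces. Here the two cases $d\geq 2$ and $d=1$ diverge exactly as in the graph-theoretic Kida formula of Ray--Valli\`{e}res: for $d\geq 2$ the "extra" contribution coming from the trivial character (the term responsible for the $+1$) is absorbed into lower-order terms of the Cuoco--Monsky-type expansion and does not affect the leading $\lambda$, so one gets the clean multiplicativity $\lambda(Y)=[Y:X]\lambda(X)$; whereas for $d=1$ the trivial-character component contributes a genuine $+1$ to each $\lambda$, producing $\lambda(Y)=[Y:X](\lambda(X)+1)-1$. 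I would make this precise by tracking the order of vanishing of each component's characteristic element at the augmentation ideal and summing, using that $\sum_{\chi}\dim\chi\cdot 1=[Y:X]$ over the $p$-power group.

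\medskip

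The main obstacle, I expect, is establishing the clean factorization of the weighted complexity of $Y_n$ over the characters of $\gal(Y/X)$ in the \emph{weighted} setting, and verifying that the weighted deformed Laplacian behaves well with respect to the induced representation / norm map so that the Newton polygon (hence $\mu$ and $\lambda$) of $Y$ really is the "sum" of those of the twisted $X$-data. In the unweighted case this rests squarely on the three-term determinant formula for the Artin--Ihara $L$-function; so the technical heart will be checking that the weighted analogue of that determinant formula (assumed available from the theory of Artin--Ihara $L$-functions for weighted graphs) is compatible with the tower structure and the finite cover simultaneously, and that the weight assumptions keep everything $p$-integral so the valuation-theoretic bookkeeping goes through.
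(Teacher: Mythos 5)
Your overall strategy is the same as the paper's: reduce to $G=\gal(Y/X)\cong\Z/p\Z$, use the induction property and the three-term determinant formula for the weighted (Mizuno--Sato) $L$-functions to factor the characteristic data of $Y$ as a product over $\phi\in\widehat{G}$ of twisted characteristic elements $Q^W_\phi=\det(\bm{D}^W(X)-\bm{W}_{\phi,\tau,\alpha})$, and then add up $\mu$- and $\lambda$-invariants over the factors. (One small subtlety you skip, which the paper handles via Theorem \ref{thm:multi_class_number}: the factorization is only established at the points $\bm{\zeta_{\psi_n}}-\bm{1}$, not as an identity of power series, but that suffices to equate the invariants.)

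However, there is a genuine gap at the central step. Additivity alone gives $\mu(Q^W_{Y,\alpha\circ\pi})=\sum_\phi\mu(Q^W_\phi)$ and $\lambda(Q^W_{Y,\alpha\circ\pi})=\sum_\phi\lambda(Q^W_\phi)$; to get Kida's formula you still must show that \emph{each} twisted factor satisfies $\mu(Q^W_\phi)=0\iff\mu(Q^W_{X,\alpha})=0$ and, in that case, $\lambda(Q^W_\phi)=\lambda(Q^W_{X,\alpha})$ exactly. You assert the $\mu$-part by appealing to ``weights being $p$-adic units'' and never address the $\lambda$-part at all, yet without it the count $\sum_\phi\lambda(Q^W_\phi)=[Y:X]\lambda(Q^W_{X,\alpha})$ does not follow. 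The actual mechanism has nothing to do with integrality of the weights (which are arbitrary elements of $\bar{\Q}_p$ here, handled by working over $K=\Q_p(\zeta_p,w_1,\ldots,w_r)$ with the normalized $\mu$-invariant $N(F)/e(K/\Q_p)$): it is that $\phi$ has $p$-power order, so its values are $p$-power roots of unity satisfying $\zeta_p-1\in\mathfrak{m}$, whence $\overline{\bm{W}}_{\phi,\tau,\alpha}=\overline{\bm{W}}_{\tau,\alpha}$ and $\overline{Q^W_\phi}=\overline{Q^W_{X,\alpha}}$ in $\F_q\llbracket T_1,\ldots,T_d\rrbracket$. Since $\mu$ vanishes exactly when this reduction is nonzero (after clearing denominators), and $\lambda$ is read off from the reduction once $\mu=0$, the congruence is what forces all the twisted factors to share the invariants of the untwisted one. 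This is the weighted analogue of the classical fact that twisting by characters of $p$-power order does not change $\mu$ or $\lambda$ when $\mu=0$, and it is the step your proposal is missing. Your account of the $d=1$ versus $d\geq 2$ dichotomy is also slightly off: the $\pm1$ discrepancy comes from the normalization $\lambda(X,\alpha)=\lambda(Q^W_{X,\alpha})-1$ when $d=1$ (the trivial zero of $Q^W$ at $T=0$), not from a separate contribution of the trivial character of $G$ to the product decomposition.
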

\noindent The work of Ray--Valli\`{e}res \cite[Theorem 4.1]{RV22} can be regarded as Theorem \ref{MainThm2} in the case of trivial weights and $d=1$.

In \cite{MV24}, McGown and Valli\`{e}res established an algorithm for computing the Iwasawa $\mu$ and $\lambda$-invariants for $\Z_p$-towers of graphs numerically. Combining with the calculation method of $\mu$ and $\lambda$-invariants for $\Z_p^d$-towers of number fields constructed by Cuoco--Monsky in \cite{CM81}, we also generalize their algorithm to those of $\Z_p^d$-towers of weighted graphs and provide several examples. In particular, we succeeded in giving an example of Theorem \ref{MainThm2} in the case where $d=2$, $\mu(X)=0$, $[Y:X]=8$, $\lambda(X)=2$, and $\lambda(Y)=16$ in Example \ref{ex:kida}. 

Since the theory of Artin--Ihara $L$-functions of weighted graphs has applications to applied mathematics including quantum walks, we are expecting that Iwasawa theory constructed in this paper will also have some applications. As for quantum walks, see \cite{Por18}, for example. We are wishing that the method of Iwasawa theory developed in number theory in this half a century will have some contribution to applied mathematics. As a first step to such an application, we would like to consider the theory of discrete-time quantum walks in graphs. It is known that the transition matrices of discrete-time quantum walks in graphs are closely related to the Ihara zeta functions of weighted graphs. Making use of the relation, Konno and Sato proved the celebrated Konno--Sato theorem, which has an application to calculate the eigenvalues of the transition matrices {\cite[Corollary 4.2]{KS12}}.

While we have such an application for the eigenvalues of transition matrices, can we say anything about the values of the characteristic polynomial when a non-eigenvalue $a$ is substituted? We have the following our main theorem. 

\begin{thm}[Theorem \ref{thm:quantumwalk}]\label{MainThm3}
Let $\{X_n\}_{n\geq 0}$ be a $\Z_p^d$-tower over a symmetric digraph $X$ and let $\bm{U}_n$ be the transition matrix of a discrete-time quantum walk in $X_n$. Let $a$ be an element of $\bar{\Q}$ (or more generally $\bar{\Q}_p$) such that, for every $n\geq 0$, $a$ is not an eigenvalue of $\bm{U}_n$. Then there exist $\mu\in\Q, \lambda\in\Z_{\geq 0}$, and $\mu_1,\ldots,\mu_{d-1},\lambda_1,\ldots,\lambda_{d-1},\nu\in\Q$ such that, for all $n\gg0$, we have
\[
v_p(\det(a\bm{I}_{2l_n}-\bm{U}_n))=(\mu p^n+\lambda n) p^{(d-1)n}+\left(\sum_{i=1}^{d-1}(\mu_ip^n+\lambda_i n)p^{(d-i-1)n}\right)+\nu,
\]
where $2l_n$ is the number of the directed edges of $X_n$.
\end{thm}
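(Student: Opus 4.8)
The plan is to use the Konno--Sato theorem to convert the characteristic polynomial of the quantum-walk transition matrix into a value of a (reciprocal) weighted Ihara zeta function, and then to run the same Iwasawa-theoretic analysis that proves Theorem \ref{MainThm1}. Although the base $X$ is an unweighted symmetric digraph, the quantum walk endows each $X_n$ with weights (the transition amplitudes), and this is exactly what places the problem in the weighted setting of Theorem \ref{MainThm1}. Concretely, I would first apply the Konno--Sato theorem \cite{KS12} to each $X_n$ to obtain a factorization
\[
\det(a\bm{I}_{2l_n}-\bm{U}_n)=(a^2-1)^{l_n-v_n}\det\bigl((a^2+1)\bm{I}_{v_n}-a\bm{W}_n\bigr),
\]
where $v_n$ is the number of vertices of $X_n$ and $\bm{W}_n$ is the weighted transition (adjacency-type) matrix attached to the walk on $X_n$; the second factor is, up to the explicit prefactor, the reciprocal weighted Ihara zeta function of $X_n$ evaluated at the point determined by $a$.

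The decisive point is that, since $X_n\slash X$ is Galois with $\gal(X_n\slash X)\cong(\Z\slash p^n\Z)^d$ and the walk data are pulled back from the base (degrees are preserved along the unramified tower, so the weights are Galois-compatible), the matrix $\bm{W}_n$ is equivariant. I would decompose the $v_n\times v_n$ determinant into a product over the characters $\rho$ of $\gal(X_n\slash X)$ of $\rho$-twisted determinants on $X$, following the Artin--Ihara $L$-function formalism for weighted graphs. Packaging the twists yields a single matrix $\bm{W}_X(T_1,\dots,T_d)\in M_{v_0}(\Lambda)$ over the Iwasawa algebra $\Lambda=\mathcal{O}[[T_1,\dots,T_d]]$, with $\mathcal{O}$ the ring of integers of a finite extension of $\Q_p$ containing $a$ and the relevant roots of unity, and hence an element
\[
h_a(T_1,\dots,T_d)=\det\bigl((a^2+1)\bm{I}_{v_0}-a\bm{W}_X(T_1,\dots,T_d)\bigr)\in\Lambda,
\]
independent of $n$, whose specialization at $\rho$ recovers the corresponding twisted factor. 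The hypothesis that $a$ is never an eigenvalue of $\bm{U}_n$ is exactly what guarantees that $h_a$ does not vanish at any $p^n$-torsion point, so every valuation below is finite.

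The argument then proceeds through the same engine as Theorem \ref{MainThm1}: the Cuoco--Monsky/Weierstrass-preparation analysis of $\sum_{\rho}v_p(h_a(\rho))$, summed over the $p^{dn}$ characters of $(\Z\slash p^n\Z)^d$, produces an asymptotic of the form $(\mu_0p^n+\lambda n)p^{(d-1)n}+\sum_{i=1}^{d-1}(\mu_ip^n+\lambda_in)p^{(d-i-1)n}+\nu$ with $\mu_0\in\Q$, $\lambda\in\Z_{\geq0}$, and the remaining invariants rational. It remains to incorporate the prefactor. Since $X_n\slash X$ has degree $p^{dn}$, both edge and vertex counts scale by the degree, so $l_n-v_n=(l_0-v_0)p^{dn}$ and
\[
v_p\bigl((a^2-1)^{l_n-v_n}\bigr)=(l_0-v_0)\,v_p(a^2-1)\,p^{dn},
\]
a clean degree-$d$ term that merely shifts $\mu_0$ to the final $\mu=\mu_0+(l_0-v_0)v_p(a^2-1)\in\Q$. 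Adding the two contributions gives the asserted formula; note that, as in Theorem \ref{MainThm1}, $\mu$ is only rational (its sign depending on $v_p(a^2-1)$) while $\lambda$ remains a non-negative integer.

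I expect the main obstacle to be the equivariant Konno--Sato step: establishing the factorization of $\det((a^2+1)\bm{I}_{v_n}-a\bm{W}_n)$ over the characters of $\gal(X_n\slash X)$ with $n$-independent base data $\bm{W}_X(T_1,\dots,T_d)$, and checking that the weights produced by the quantum walk are genuinely pulled back through the tower so that the machinery behind Theorem \ref{MainThm1} applies. A secondary technical point is the passage from $\bar{\Q}_p$ to a finite extension of $\Q_p$: one must verify that enlarging $\mathcal{O}$ leaves the rational Iwasawa invariants unchanged, which holds because $v_p$ is normalized by $v_p(p)=1$ and these invariants are intrinsic to the characteristic ideal of the $\Lambda$-module cut out by $h_a$.
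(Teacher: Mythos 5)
Your proposal follows essentially the same route as the paper's proof: the Konno--Sato identity $\det(a\bm{I}_{2l_n}-\bm{U}_n)=(a^2-1)^{-p^{dn}\chi(X)}\prod_{\psi_n}\det((a^2+1)\bm{I}_m-a\bm{W}_{\psi_n})$ (the paper reaches it via $\zeta(X_n,t)^{-1}=\det(\bm{I}_{2l_n}-t\bm{U}_n)$, the three-term determinant formula, and the substitution $u=1/t$, and notes $\bm{D}^W(X)=2\bm{I}_m$ for quantum-walk weights so the diagonal term is $(a^2+1)\bm{I}_m$), then packaging the character twists into a single power series $Q^W_a(T)=\det(a^2\bm{I}_m-a\bm{W}_{\tau,\alpha}+(\bm{D}^W(X)-\bm{I}_m))$ over $\Lambda_d\otimes_{\mathcal{O}}K$ with $K=\Q_p(a)$, and applying the Cuoco--Monsky asymptotics (Theorem \ref{thm:multi_class_number}) together with the observation that the prefactor contributes $-\chi(X)v_p(a^2-1)p^{dn}$ to $\mu$. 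The argument is correct and matches the paper's in all essentials.
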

This can be regarded as a graph discrete-time quantum walk analogue of Conjecture \ref{conj:Greenberg}.

Finally, we will prove the following useful theorem on comparison between the weighted complexities in a cover of graphs, on the way to prove Theorem \ref{MainThm1} and Theorem \ref{MainThm2}.
\begin{thm}[Theorem \ref{thm:recurrence_formula}]\label{MainThm4}
Let $X$ be a weighted symmetric digraph. Let $G$ be a finite abelian group and let $X(G,\alpha)$ be a voltage cover defined in Definition \ref{def:voltagecover}. Let $h_X(\psi,1,\alpha)$ be elements in $\bar{\Q}_p$ defined in Definition \ref{def:h}. Let $\widehat{G}$ denote the character group ${\rm Hom}(G, \bar{\Q}^*)$ of $G$. Then it holds that
\begin{align}
\kappa^W(X(G,{\alpha}))=\frac{\kappa^W(X)}{|G|}\prod_{\psi\in\widehat{G}\backslash\{1\}}h_X(\psi,1,\alpha).
\end{align}
\end{thm}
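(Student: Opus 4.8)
The plan is to reduce the statement to the weighted matrix-tree theorem together with the character decomposition of the covering Laplacian. Recall that the weighted complexity $\kappa^W(X)$ is computed by the weighted version of Kirchhoff's theorem as any cofactor of the weighted Laplacian $\mathbf{L}_X=\mathbf{D}_X-\mathbf{A}_X$. Since $\mathbf{L}_X$ annihilates the all-ones vector $\mathbf{1}$, an elementary comparison of the coefficient of the linear term of $\det(u\mathbf{I}-\mathbf{L}_X)$ with the sum of its principal $(|V_X|-1)$-minors shows that, whenever $X$ is connected (so that $0$ is a simple eigenvalue), one has $\kappa^W(X)=\frac{1}{|V_X|}\prod_{i\geq 2}\mu_i$, where $\mu_2,\dots,\mu_{|V_X|}$ are the nonzero eigenvalues of $\mathbf{L}_X$. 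This identity is valid over $\bar{\Q}_p$ and does not require positive-definiteness, which is precisely the feature needed in the signed and weighted setting.

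Next I would set up the covering combinatorially. The voltage cover $X(G,\alpha)$ of Definition \ref{def:voltagecover} has vertex set $V_X\times G$, and $G$ acts by deck transformations. The weighted Laplacian $\mathbf{L}$ of $X(G,\alpha)$ commutes with this $G$-action, so after applying the discrete Fourier transform on the abelian group $G$ it block-diagonalizes into $|G|$ blocks $\mathbf{L}_\psi$, one for each character $\psi\in\widehat{G}$, each of size $|V_X|\times|V_X|$; this block decomposition is the value at $u=1$ of the Artin--Ihara factorization of the zeta function of the cover into twisted $L$-functions. The trivial-character block is exactly the base Laplacian, $\mathbf{L}_1=\mathbf{L}_X$, while for $\psi\neq 1$ the block $\mathbf{L}_\psi$ is the $\psi$-twisted weighted Laplacian. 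The point of contact with the hypotheses is the identification, furnished by Definition \ref{def:h} as the value at $u=1$ of the three-term determinant, of this twisted Laplacian with the quantity in the statement: $\det(\mathbf{L}_\psi)=h_X(\psi,1,\alpha)$.

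Then I would assemble the eigenvalues. The spectrum of $\mathbf{L}$ is the disjoint union over $\psi\in\widehat{G}$ of the spectra of the blocks $\mathbf{L}_\psi$, and connectedness of $X(G,\alpha)$ forces $0$ to be a simple eigenvalue of $\mathbf{L}$, necessarily lying in the $\psi=1$ block; hence every block $\mathbf{L}_\psi$ with $\psi\neq 1$ is nonsingular. Multiplying the nonzero eigenvalues yields
\[
\prod\bigl(\text{nonzero eigenvalues of }\mathbf{L}\bigr)=\Bigl(\prod_{i\geq 2}\mu_i\Bigr)\prod_{\psi\neq 1}\det(\mathbf{L}_\psi)=|V_X|\,\kappa^W(X)\prod_{\psi\neq 1}h_X(\psi,1,\alpha).
\]
Applying the eigenvalue form of the weighted matrix-tree theorem to $X(G,\alpha)$, whose vertex set has cardinality $|G|\,|V_X|$, and dividing gives
\[
\kappa^W(X(G,\alpha))=\frac{1}{|G|\,|V_X|}\cdot|V_X|\,\kappa^W(X)\prod_{\psi\neq 1}h_X(\psi,1,\alpha)=\frac{\kappa^W(X)}{|G|}\prod_{\psi\in\widehat{G}\backslash\{1\}}h_X(\psi,1,\alpha),
\]
which is the asserted formula.

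The main obstacle I anticipate is matching conventions at the two delicate junctures rather than any deep difficulty. First, I must verify that the Fourier-transformed block of the covering Laplacian is literally the matrix whose determinant Definition \ref{def:h} records as $h_X(\psi,1,\alpha)$; this amounts to checking that the voltage assignment $\alpha$ enters the off-diagonal weighted entries through the scalar $\psi(\alpha(e))$ in exactly the way the three-term determinant at $u=1$ prescribes, and that the diagonal (weighted degree) part is left untwisted. Second, in the signed and weighted generality the Laplacian need not be positive semidefinite, so I must ensure that both the cofactor-to-eigenvalue-product identity and the simplicity of the zero eigenvalue survive; each follows from connectedness, and this is where the standing hypothesis that all the relevant graphs are connected is genuinely used. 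Once these identifications are pinned down, the computation above is forced.
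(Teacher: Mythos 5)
There is a genuine gap, and it sits exactly at the point this theorem is designed to handle. You claim that connectedness of $X(G,\alpha)$ forces $0$ to be a simple eigenvalue of its weighted Laplacian, hence that every twisted block $\bm{L}_\psi$ with $\psi\neq 1$ is nonsingular and that the reduced Laplacian of the cover is nonsingular. Over $\bar{\Q}_p$ with arbitrary (e.g.\ signed) weights this is false: the weighted Laplacian of a connected weighted graph need not have one-dimensional kernel. The paper's Example \ref{sec:examples} (the two-loop bouquet with weights $a$, $b$ and $a+b=0$) produces a connected double cover whose weighted Laplacian is the zero $2\times 2$ matrix, so $0$ has multiplicity $2$ and $\kappa^W=0$; correspondingly $h_X(\psi,1,\alpha)=0$ for the nontrivial character. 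Since the whole point of Theorem \ref{thm:recurrence_formula}, as opposed to Mizuno--Sato's \cite[Corollary 1]{MS03}, is to remove precisely such a tacit non-degeneracy hypothesis, an argument that reinstates it through "connectedness $\Rightarrow$ simple zero eigenvalue" does not prove the stated theorem. The same objection applies to your phrase "product of the nonzero eigenvalues": when $0$ is a multiple eigenvalue that product is not the quantity the matrix-tree theorem computes.

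The good news is that your route can be repaired without new ideas, and it then gives a genuinely different (and arguably slicker) proof than the paper's. For an $M\times M$ matrix $\bm{L}$ with $\bm{L}\bm{1}=0$ and all cofactors equal to $\kappa^W$ (Theorem \ref{thm:weighted_matrix_tree}), the coefficient of $u$ in $\det(u\bm{I}_M-\bm{L})$ gives $M\kappa^W=e_{M-1}(\lambda_1,\dots,\lambda_M)$, and once one designates a single zero eigenvalue $\lambda_1=0$ the elementary symmetric polynomial collapses to $\prod_{i\geq 2}\lambda_i$ \emph{regardless of whether further $\lambda_i$ vanish} (every other summand of $e_{M-1}$ contains the factor $\lambda_1$). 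Applying this to the cover, placing the designated zero in the $\psi=1$ block (legitimate since $\bm{L}_X\bm{1}=0$), and splitting the remaining product over the Fourier blocks yields $M|G|\,\kappa^W(X(G,\alpha))=\bigl(\prod_{i\geq2}\mu_i(\bm{L}_X)\bigr)\prod_{\psi\neq1}\det(\bm{L}_\psi)=M\kappa^W(X)\prod_{\psi\neq1}h_X(\psi,1,\alpha)$, with both sides equal to $0$ in the degenerate cases. You should also verify, as you note, that the $\psi$-block really is $\bm{D}^W(X)-\bm{W}_\psi$, which is $h_X(\psi,1,\alpha)$ by Definition \ref{def:h}; that part of your proposal is correct. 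By contrast, the paper avoids eigenvalues entirely: it conjugates the covering Laplacian by $\bm{I}_m\otimes\bm{P}$ with $\bm{P}$ the character table, and then uses the Cauchy--Binet theorem twice to track how the $(1,1)$-cofactor transforms under this (non-orthogonal) conjugation, which is where the factor $1/|G|$ emerges. Your spectral argument, once stated in terms of $e_{M-1}$ rather than "nonzero eigenvalues", replaces that cofactor bookkeeping with a one-line symmetric-function identity.
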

This statement was shown by Mizuno--Sato for (simple) weighted symmetric digraphs tacitly assuming that the total weight does not coincide with the number of vertices in \cite[Corollary 1]{MS03}. The total weight coincides with the number of edges when we consider a graph with trivial weights, and this assumption has always been imposed whenever one makes use of this type of statements. While this assumption was insignificant when we consider a graph with trivial weights since it means the genus of the graph is not $1$, it is not the case for weighted graphs. We succeeded in removing the assumption.

\noindent
\textbf{Organization of the paper.}
In \S\ref{sec:pre}, we prepare basic notions for our proofs. We recall the definitions regarding graphs in \S\ref{sec:graph} and that of voltage covers in \S\ref{sec:voltage}. In \S\ref{sec:Sato_L}, we introduce Mizuno--Sato $L$-function that is a key tool of our proofs. Weighted complexities are explained in \S\ref{sec:weighted_complexities}, which are the main objects studied in this paper.

In \S\ref{sec:ICNF}, we study asymptotic behaviors of weighted complexities in a $\Z_p^d$-tower via Theorem \ref{MainThm4}. We prove a weighted graph-analogue of Iwasawa's class number formula in \S\ref{sec:Iwasawa_type}. 

We prove an analogue of Kida's formula in \S\ref{sec:Kida}, which measures the variation of Iwasawa invariants defined in \S\ref{sec:ICNF} in a Galois cover of $p$-power degree. 

In \S\ref{sec:calculation}, we introduce an algorithm to calculate the characteristic elements of $\Z_p^d$-towers of graphs via the method of \cite[\S5-\S6]{MV24}. 

In \S\ref{sec:examples}, we give examples of our main results proved in \S\ref{sec:ICNF} and \S\ref{sec:Kida} by using the algorithm of calculation introduced in \S\ref{sec:calculation}. In particular, we calculate characteristic elements, Iwasawa $\mu$-invariants, and Iwasawa $\lambda$-invariants.

Finally, in \S\ref{sec:quantum_walks}, we apply the ideas of Iwasawa theory and the theory of weighted graphs to study transition matrices of discrete-time quantum walks in graphs.\\

\noindent
\textbf{Acknowledgements.}
The authors would like to thank Ryosuke Murooka for helping us to consider the proof of Theorem \ref{MainThm4} together. The authors are grateful to Takenori Kataoka and Iwao Sato for reading an earlier version of this paper carefully and pointing out mathematical mistakes. The authors would also like to express their gratitude to Akihiro Goto, Takashi Hara, Ayaka Ishikawa, Yu Katagiri, Shinichi Kobayashi, Norio Konno, Antonio Lei, Sudipta Mallik, Masanori Morishita, Yusuke Nakamura, Anwesh Ray, Jun Ueki, Daniel Valli\`{e}res, and the anonymous referee for their helpful comments and encouragement. The first author was supported by WISE program (MEXT) at Kyushu University. 

\section{Preliminaries}\label{sec:pre}
\subsection{Graph theory}\label{sec:graph}
We recall basic notions of graph theory in this section. Our references are \cite[\S2.1]{Ser03}, \cite{Nor98}, \cite{GR01}, \cite{Bel98}, and \cite[\S1]{MS03}. 
\begin{defi} 
First, we give the definitions of the following terms.
\begin{itemize}
\item An \emph{undirected graph} $X$ is a data $(V(X),E(X),\Phi)$, where $V(X)$ is a set of vertices of $X$, $E(X)$ is a set of undirected edges of $X$, and $\Phi:E(X)\to V(X)^{(2)}$ is a map. Here, $V(X)^{(2)}$ is the set of unordered pairs of $V(X)$. For $e\in E(X)$, we call $v_1$ and $v_2$ the \emph{ends} of $e$ if $\Phi(e)=\{v_1,v_2\}$. 
\item A \emph{directed graph (or digraph)} $Y$ is a data $(V(Y),\E(Y),\inc:\E(Y)\ni e\mapsto (o(e),t(e))\in V(Y)\times V(Y), \E_{\rm{inv}}(Y), \overline{\cdot}\in\auto(\E_{\rm{inv}}(Y))$, where $V(Y)$ is a set of vertices of $Y$, $\E(Y)$ is a set of directed edges of $Y$, and $\E_{\rm{inv}}(Y)$ is a subset of $\E(Y)$ satisfying $\bar{e}\neq e, \bar{\bar{e}}=e$, and $o(e)=t(\bar{e})$ for all $e\in \E_{\rm{inv}}(Y)$. The map $\inc:\E(Y)\to V(Y)\times V(Y)$ is called the \emph{incident map}, $o(e)$(resp. $t(e)$) is called an \emph{original point} (resp. a \emph{terminal point}) \emph{of} $e$. An element $e\in\E(X)$ is called a \emph{directed edge} from $o(e)$ to $t(e)$. For $e\in \E_{\rm{inv}}(Y)$, $\bar{e}$ is called the \emph{inverse edge of} $e$.
\item An undirected (resp. A directed) graph $X$ is said to be \textit{finite} if $V(X)$ and $E(X)$ (resp. $\E(X)$) are finite sets. 
\end{itemize}
\end{defi}
In this paper, we allow graphs to have multi-loops and multi-edges, and unless otherwise mentioned, we assume that all graphs are finite. 
\begin{defi}
Let $X$ be an undirected graph. A sequence $C=e_1e_2\cdots e_{n-1}$ consisting of distinct edges $e_i\in E(X)$ is called a \emph{trail} in $X$ if there exists a sequence of vertices $\{v_i\}_{1\leq i\leq n}$ such that $\Phi(e_i)=\{v_i,v_{i+1}\}$ for every $1\leq i\leq n-1$. A trail $C=e_1e_2\cdots e_{n-1}$ is a \emph{cycle} if $v_1=v_n$ and $v_i\neq v_j$ for all $i,j$ such that $i\neq j$ and $\{i,j\}\neq\{1,n\}$.
\end{defi}
We give some definitions needed when constructing Mizuno--Sato $L$-functions in \S\ref{sec:Sato_L}.
\begin{defi}
Let $X$ be a directed graph.
\begin{itemize}
\item A \textit{path} in $X$ is a sequence $C=e_1e_2\cdots e_n$, $e_i\in\E(X)$ such that $t(e_i)=o(e_{i+1})$ for all $1\leq i\leq n-1$. We define $|C|:=n$ and call it the \emph{length} of $C$.
\item A path $C=e_1e_2\cdots e_n$ has a \textit{backtrack} if $e_{i+1}=\bar{e}_{i}$ for some $1\leq i \leq n-1$.
\item  A path $C=e_1e_2\cdots e_n$ has a \textit{tail} if $e_{1}=\bar{e}_{n}$.
\item A path $C=e_1e_2\cdots e_n$ is a \textit{closed path} if $o(e_1)=t(e_n)$.
\item A closed path $C=e_1e_2\cdots e_n$ is a \textit{cycle} if it satisfies the following conditions: \begin{enumerate}
    \item It has no backtracks,
    \item It has no tails,
    \item $t(e_i)\neq t(e_j)$ for all $1\leq i,j \leq n$ with $i\neq j$.
\end{enumerate}
\item Denote the closed path obtained by going $r$-times around $C$ by $C^r$ ($r\in\Z_{>0}$). 
\item A closed path $C$ is \textit{prime} if $C\neq D^r$ for any $r\geq 2$ and any closed path $D$ in $X$. (We adopt this definition to make use of \cite[Theorem 9]{MS14}.)
\item Two closed paths $C_1=e_1e_2\cdots e_n$ and $C_2=e'_1e'_2\cdots e'_n$ are \textit{equivalent} if there exists an integer $k$ such that, for all $1\leq j\leq n$,
\begin{align}
e'_j=\begin{cases}
e_{j+k}&(j+k\leq n),\\
e_{j+k-n}&(j+k>n).
\end{cases}
\end{align}
Denote the equivalence class of a closed path $C$ by $[C]$.
\item $X$ is \emph{connected} if there exists a path $C=e_1e_2\cdots e_r$ such that $o(e_1)=v_1$ and $t(e_r)=v_2$ for arbitrary $v_1,v_2\in V(X)$. 
\end{itemize}
\end{defi}
We recall the definition of the \textit{spanning trees} of graphs that have been studied in previous studies on Iwasawa theory of graphs.
\begin{defi} 
Let $X$ be an undirected graph and $Y$ a directed graph.
\begin{enumerate}[label=$\bullet$]
\item An undirected graph $X'$ is called a \emph{subgraph of }$X$ if $V(X')\subseteq V(X)$, $E(X')\subseteq E(X)$, and $\Phi_{X'}=\Phi_X|_{E(X')}$. Similarly, a directed graph $Y'$ is called a \textit{subgraph of }$Y$ if $V(Y')\subseteq V(Y)$, $\E(Y')\subseteq \E(Y)$, and $\inc_{Y'}=\inc_Y|_{\E(Y)}$.
\item A subgraph $X'$ of $X$ is called a \emph{spanning} subgraph if $V(X')=V(X)$ Similarly, A subgraph $Y'$ of $Y$ is called a \textit{spanning} subgraph if $V(Y')=V(Y)$.
\item A \textit{spanning tree} of $X$ is a connected spanning subgraph of $X$ without cycles.
\item A \emph{spanning arborescence} of $Y$ \emph{rooted at} $v_0\in V(Y)$ is a spanning subgraph without cycles such that, for each vertex $v\in V(Y)$ other than $v_0$, there uniquely exists a path from $v_0$ to $v$.
\end{enumerate}
\end{defi}
\begin{defi}
Let $\Q$ (resp. $\Q_p$) be the field of rationals (resp. $p$-adic numbers). Fix two algebraic closures $\bar{\Q}$ (resp. $\bar{\Q}_p$) of $\Q$ (resp. $\Q_p$) and an embedding $\bar{\Q}\hookrightarrow \bar{\Q}_p$. Also, fix an embedding $\bar{\Q}\hookrightarrow \mathbb{C}$. A \emph{weighted undirected graph} $X$ is a graph $X$ with a function $w:E(X)\to\bar{\Q}_p$ called a \emph{weight function}. A \emph{weighted digraph} $X$ is a directed graph $X$ with a function $w:\E(X)\to\bar{\Q}_p$ called a \emph{weight function}. We remark that, although a lot of our references consider the situation where the codomain of the weight function is $\mathbb{C}$ instead of $\bar{\Q}_p$, their arguments also work for $\bar{\Q}_p$.
\end{defi}
\begin{defi}[{\cite[\S2.1]{Ser03}}]
A \emph{symmetric digraph} $X$ is a digraph satisfying $\E(X)=\E_{\rm{inv}}(X)$.
We define the \textit{Euler characteristic} of $X$ by
\begin{align}
\chi(X):=\#V(X)-\frac{1}{2}\#\E(X).
\end{align}
\end{defi}
\begin{rem}
In Serre's book \cite{Ser03} and previous studies on Iwasawa theory of graphs, the term ``graphs'' means symmetric digraphs.
\end{rem}
For an arbitrary undirected graph $X$, by giving a direction of $e\in E(X)$ and the inverse edge $\bar{e}$ of $e\in E(X)$, we can construct the \emph{symmetric digraph} $D_X$ \emph{of} $X$. In this sense, considering an undirected graph is essentially equivalent to considering a symmetric digraph. 

It is known that an undirected graph has at least one spanning tree if and only if it is connected (cf. \cite[Theorem 4.3.2]{BR12}). In \S\ref{sec:weighted_complexities}, we introduce the \textit{weighted complexities} of weighted symmetric digraphs that are generalization of spanning trees.
\begin{defi}
A \emph{weighted symmetric digraph} is a symmetric digraph $X$ equipped with a weight function $w=w_X:\E(X)\to\bar{\Q}_p$.

Let $X$ be a weighted symmetric digraph with $V(X)=\{v_1,v_2,\ldots,v_m\}$. We define the \emph{weighted matrix} $\bm{W}(X):=(w_{ij})_{i,j}$ by
\begin{align}
 w_{ij}:=\sum_{\substack{e\in\E(X)\\ o(e)=v_i, t(e)=v_j}}w(e). 
\end{align}
A weighted matrix $\bm{W}(X)$ is \textit{strongly-symmetric} if $w(e)=w(\bar{e})$ for every edge $e\in\E(X)$.

A \emph{weighted symmetric digraph $D_X$ derived from a weighted undirected graph $X$} is a weighted symmetric digraph equipped with a weight function $w:\E(D_X)\to\bar{\Q}_p$ with strongly-symmetric weighted matrix naturally derived from the weight function of $X$.
\end{defi}
Hence considering a weighted undirected graph is essentially equivalent to considering a weighted symmetric digraph with strongly-symmetric weighted matrix. 
\begin{rem}
In \cite{MS14}, the term ``\textit{pseudo-symmetric}'' is adopted instead of strongly-symmetric.
\end{rem}
We recall the definitions of morphisms and coverings of weighted graphs. For more details, we refer to \cite[\S2.3]{DV23} and \cite[\S2]{Kat24}.
\begin{defi}
Let $X=(V(X),\E(X),w_X)$ and $Y=(V(Y),\E(Y),w_Y)$ be weighted symmetric digraphs. A \textit{morphism} of weighted graphs $f:X\to Y$ is a pair $(f_{V}:V(X)\to V(Y),f_{\E}:\E(X)\to \E(Y))$ of maps such that
\begin{itemize}
\item $f_{V}(o(e))=o(f_{\E}(e))$ and $f_{V}(t(e))=t(f_{\E}(e))$ for every $e\in \E(X)$,
\item $\overline{f_{\E}(e)}=f_{\E}(\bar{e})$ for every $e\in \E(X)$,
\item $w_Y(f_{\E}(e))=w_X(e)$ for every $e\in \E(X)$.
\end{itemize}
\end{defi}
\begin{defi}\label{def:cover}
Let $X$ and $Y$ be connected and weighted symmetric digraphs. 
\begin{itemize}
\item A morphism $\pi:Y\to X$ is called a (\emph{unramified}) \emph{covering} or merely a \emph{cover} if $\pi$ satisfies the following conditions:
\begin{enumerate}
\item $\pi_V$ and $\pi_{\E}$ are surjective,
\item For each $v\in V(Y)$, the restriction of $\pi_{\E}$ to $\E_{Y,v}$
\begin{align}
\pi_{\E}|_{\E_{Y,v}}:\E_{Y,v}\to \E_{X,\pi_V(v)}
\end{align}
is bijective, where $\E_{Y,v}:=\{e\in \E_Y\ |\ o(e)=v\}$ for $v\in V(Y)$. 
\end{enumerate}
\item An \textit{automorphism} of $Y\slash X$ is an automorphism of $Y$ as a morphism of weighted graphs that is compatible with the covering map $\pi:Y\to X$. 
\item A covering $Y\slash X$ is said to be \textit{Galois} (or \textit{normal}) if the order of the automorphism group $\autom(Y\slash X)$ of $Y\slash X$ is $\#\pi^{-1}(v)$ for any choice of $v\in V(X)$, and in this case, put the Galois group $\gal(Y\slash X):=\autom(Y\slash X)$.
\end{itemize}
\end{defi}
\subsection{Weighted voltage covers}\label{sec:voltage}
\indent In this subsection, we recall the notion of weighted voltage covers. See also \cite[\S3]{Sat07}. Let $X$ be a connected graph with $m=\#V(X)$, $l=\frac{1}{2}\#\E(X)$, and a weighted matrix $\bm{W}(X)$.
\begin{defi}\label{def:voltageassignment}
For a (possibly infinite) weighted symmetric digraph $X$ and a (possibly infinite) group $G$,  a map $\alpha:\E(X)\to G$ is called a \emph{voltage assignment} if $\alpha(\bar{e})=\alpha(e)^{-1}$ for each $e\in \E(X)$.
\end{defi}
We construct the derived graph $X(G,\alpha)$ for a pair $(X,\alpha:\E(X)\to G)$ where $X$ is a weighted symmetric digraph, $G$ is a finite group, and $\alpha$ is a voltage assignment.
\begin{defi}\label{def:voltagecover}
Set $V(X(G,{\alpha})):=V(X)\times G$ and $\E(X(G,{\alpha})):=\E(X)\times G$. For $(e,\sigma)\in \E(X(G,{\alpha}))$, we define the original point $o((e,\sigma))$ as $(o(e),\sigma)$ and the terminal point $t((e,\sigma))$ as $(t(e), \sigma\alpha(e))$. We also define the inverse edge of $(e,\sigma)$ by $\bar{(e,\sigma)}:=(\bar{e},\sigma\alpha(e))$. We equip a weight function $w:\E(X(G,{\alpha}))\to \bar{\Q}_p$ with $X(G,{\alpha})$ by $w((e,\sigma)):=w(e)$. It is easy to check that $X(G,\alpha)$ is a weighted symmetric digraph.
\end{defi}
 The following facts are known (cf. \cite[Theorem 8]{Gon21}). The natural covering $\pi:X(G,{\alpha})\to X$ defined by a pair of maps $\pi_V(v,\sigma):=v$ and $\pi_{\E}(e,\sigma):=e$ for $v\in V(X)$, $e\in\E(X)$ and $\sigma\in G$ becomes a Galois covering such that its automorphism group coincides with $G$ when $X(G,\alpha)$ is connected. Conversely,  every finite Galois cover $Y\slash X$ is isomorphic to a voltage cover whose Galois group is equal to $\gal(Y\slash X)$. We can also prove a similar result for weighted symmetric digraphs by making slight modifications to the argument of \cite[Theorem 8]{Gon21}. The weighted analogue of \cite[Theorem 4]{Gon21} also gives a necessary and sufficient condition for $X(G,{\alpha})$ to be connected.
\subsection{Weighted Artin--Ihara $L$-functions}\label{sec:Sato_L}
In this subsection, we explain weighted Artin--Ihara $L$-functions, which will be used in the proof of our main results. There are various types of weighted Artin--Ihara $L$-functions. We will utilize the following $L$-functions first discussed by Mizuno--Sato in \cite{MS04}. Let $X$ be a weighted symmetric digraph with $\mathbb{E}(X)=\{e_1,e_2,\ldots,e_l,e_{l+1}=\overline{e}_1,\ldots, e_{2l}=\overline{e}_l\}$. Let $G$ be a finite group and let $\alpha:\mathbb{E}(X)\to G$ be a voltage assignment. To construct weighted $L$-functions, for each $\sigma\in G$, we define two matrices $\bm{B}_{\sigma}:=(b_{ij}^{(\sigma)})_{i,j}$ and $\bm{C}_{\sigma}:=(c_{ij}^{(\sigma)})_{i,j}$ by
\begin{align}
b_{ij}^{(\sigma)}:=\begin{cases}
w(e_j)&(\text{if }\alpha(e_i)=\sigma\text{ and }t(e_i)=o(e_j)),\\
0&(\text{otherwise}),
\end{cases}
\ c_{ij}^{(\sigma)}:=\begin{cases}
1&(\text{if }\alpha(e_i)=\sigma\text{ and }\bar{e_i}=e_j),\\
0&(\text{otherwise}).
\end{cases}
\end{align}
In this paper, if we merely say a representation, then we mean that it is a representation on a $\bar{\Q}_p$-vector space.

Let $\bar{\Q}_p\llbracket t \rrbracket$ denote the formal power series ring.
\begin{defi}[{\cite{Sat07}}, {\cite{MS04}} with $\mathbb{C}$ replaced by $\bar{\Q}_p$]\label{def:Sato'L}
Let $\rho$ be a representation of $G$ with degree $d_{\rho}\ (\geq1)$. \emph{Mizuno--Sato $L$-function} associated with $\rho$ and $\alpha$ is defined by
\begin{align}
L(X,t,\rho,\alpha):=\det\left(\bm{I}_{2ld_{\rho}}-t\sum_{\sigma\in G}(\bm{B}_{\sigma}-\bm{C}_{\sigma})\bigotimes \rho(\sigma)\right)^{-1}\in \bar{\Q}_p\llbracket t \rrbracket .
\end{align}
Here $\bigotimes$ is the Kronecker product of matrices and $\bm{I}_{2ld_{\rho}}$ is the $2ld_{\rho}\times2ld_{\rho}$ identity matrix. In particular, for the trivial representation $1_{\rm{rep}}$ of $G$ and the trivial voltage assignment $1_{\rm{vol}}:\E(X^{\alpha})\to G$, we denote $L(X,t,1_{\rm{rep}},1_{\rm{vol}})$ by $\zeta(X,t)$ and call it a \emph{zeta function} of $X$. Note that the constant term of the formal power series $\det\left(\bm{I}_{2ld_{\rho}}-t\sum_{\sigma\in G}(\bm{B}_{\sigma}-\bm{C}_{\sigma})\bigotimes \rho(\sigma)\right)$ is $1$, which implies it is invertible in $\bar{\Q}_p\llbracket t\rrbracket$.
\end{defi}
Now, we observe some properties of Mizuno--Sato $L$-functions, which play crucial roles in our proofs. 
\begin{thm}[{\cite[Corollary 1]{Sat07}} with $\mathbb{C}$ replaced by $\bar{\Q}_p$]\label{thm:Sato_prod}
Under the same setting as Definition \ref{def:Sato'L}, we have
\begin{align}\label{eq:Sato_prod}
\zeta(X(G,{\alpha}),t)=\prod_{\rho}L(X,t,\rho,\alpha)^{d_{\rho}},
\end{align}
where $\rho$ runs over all isomorphism classes of irreducible representations of $G$.
\end{thm}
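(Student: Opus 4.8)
The plan is to recognize $\zeta(X(G,\alpha),t)$ as a single Mizuno--Sato $L$-function attached to the \emph{regular} representation of $G$, and then to split that $L$-function according to the decomposition of the regular representation into irreducibles. First I would record the defining observation that for \emph{any} weighted symmetric digraph $Y$ one has $\zeta(Y,t)=\det(\bm{I}-t(\bm{B}_Y-\bm{C}_Y))^{-1}$, where $\bm{B}_Y,\bm{C}_Y$ are the edge matrices of Definition \ref{def:Sato'L} formed with the trivial voltage assignment to the trivial group and the trivial representation. Applying this with $Y=X(G,\alpha)$, whose directed edge set is $\E(X)\times G$ and whose weight function satisfies $w((e,\sigma))=w(e)$, reduces the theorem to an identity between the $2l|G|\times 2l|G|$ edge matrices of the cover and the Kronecker sums appearing in $L(X,t,\rho_{\mathrm{reg}},\alpha)$, where $\rho_{\mathrm{reg}}$ denotes the right regular representation of $G$ (of degree $d_{\rho_{\mathrm{reg}}}=|G|$).

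The key computational step is this matrix identity. I would order the edges of $X(G,\alpha)$ so that the $\E(X)$-coordinate varies outside the $G$-coordinate. Using the incidence rule $o((e_i,\sigma))=(o(e_i),\sigma)$ and $t((e_i,\sigma))=(t(e_i),\sigma\alpha(e_i))$, together with the inverse-edge rule $\overline{(e_i,\sigma)}=(\bar{e_i},\sigma\alpha(e_i))$ — which I would verify directly from Definition \ref{def:voltagecover} — I would show that the edge matrices $\bm{B}'$ and $\bm{C}'$ of the cover satisfy $\bm{B}'=\sum_{g\in G}\bm{B}_g\bigotimes\rho_{\mathrm{reg}}(g)$ and $\bm{C}'=\sum_{g\in G}\bm{C}_g\bigotimes\rho_{\mathrm{reg}}(g)$. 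Here $\rho_{\mathrm{reg}}(g)$ is the permutation matrix whose $(\sigma,\tau)$-entry is $1$ precisely when $\tau=\sigma g$; the point is that each directed edge $e_i$ carries a unique voltage $\alpha(e_i)$, so for each $e_i$ exactly one term of the sum over $g$ is active, and its $G$-coordinate acts by right translation $\sigma\mapsto\sigma\alpha(e_i)$. Consequently $\bm{B}'-\bm{C}'=\sum_{g\in G}(\bm{B}_g-\bm{C}_g)\bigotimes\rho_{\mathrm{reg}}(g)$, and comparing with the two boxed displays above gives $\zeta(X(G,\alpha),t)=L(X,t,\rho_{\mathrm{reg}},\alpha)$.

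Finally I would pass from the regular representation to the irreducibles using two elementary properties of $L(X,t,\,\cdot\,,\alpha)$. It is invariant under equivalence of representations: conjugating $\rho$ by an invertible $P$ conjugates $\sum_{g}(\bm{B}_g-\bm{C}_g)\bigotimes\rho(g)$ by $\bm{I}\bigotimes P$, leaving the determinant unchanged. It is also multiplicative under direct sums: if $\rho=\rho_1\oplus\rho_2$, then each $\rho(g)$ is block diagonal, so after reordering the Kronecker factor the matrix inside the determinant becomes block diagonal and its determinant factors as a product. Combining these two facts with the standard decomposition $\rho_{\mathrm{reg}}\cong\bigoplus_{\rho}\rho^{\oplus d_\rho}$, where $\rho$ runs over the inequivalent irreducible representations of $G$, yields $L(X,t,\rho_{\mathrm{reg}},\alpha)=\prod_{\rho}L(X,t,\rho,\alpha)^{d_\rho}$, which is the desired formula.

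I expect the main obstacle to be the bookkeeping in the matrix identity of the middle step: one must fix consistent conventions for the ordering of the Kronecker product and for left- versus right-translation in $\rho_{\mathrm{reg}}$, and verify the inverse-edge formula on the cover carefully so that the $\bm{C}'$-part matches $\sum_{g}\bm{C}_g\bigotimes\rho_{\mathrm{reg}}(g)$ rather than its transpose. Once these conventions are pinned down the verification is routine, and the representation-theoretic splitting in the last step is purely formal.
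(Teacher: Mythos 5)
Your proof is correct: the inverse-edge formula $\overline{(e,\sigma)}=(\bar{e},\sigma\alpha(e))$ on the voltage cover, the block identity $\bm{B}'-\bm{C}'=\sum_{g\in G}(\bm{B}_g-\bm{C}_g)\bigotimes\rho_{\mathrm{reg}}(g)$ (which holds because each $e_i$ carries the unique voltage $\alpha(e_i)$, so exactly one summand is active per row block), and the splitting via $\rho_{\mathrm{reg}}\cong\bigoplus_{\rho}\rho^{\oplus d_{\rho}}$ together with equivalence-invariance and direct-sum multiplicativity all check out, and the argument needs no commutativity of $G$, matching the generality of the statement. The paper gives no proof of this theorem --- it is imported by citation to Sato's Corollary 1 --- and Sato's proof is precisely this regular-representation route, so you have reconstructed the cited argument; note also that your direct block-diagonalization of $L(X,t,\rho_1\oplus\rho_2,\alpha)$ sidesteps the Euler-product proof the paper later gives for Theorem \ref{thm:property_SatoL}(i), and since everything is finite-dimensional linear algebra over a field containing the weights and character values, it simultaneously justifies the ``with $\mathbb{C}$ replaced by $\bar{\Q}_p$'' qualifier in the theorem's attribution.
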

Mizuno--Sato $L$-functions satisfy the three-term determinant formula as well. 
\begin{thm}[Three-term determinant formula, {\cite[Theorem 6]{Sat07}} with $\mathbb{C}$ replaced by $\bar{\Q}_p$]\label{thm:weighted_L_3term}
Under the same setting as Definition \ref{def:Sato'L}, we have
\begin{align}
&L(X,t,\rho,\alpha)^{-1}\\
&=(1-t^2)^{-\chi(X)d_{\rho}}\det\left(\bm{I}_{md_{\rho}}-t\sum_{\sigma\in G}\bm{W}(\sigma)\bigotimes\rho(\sigma)+t^2\left((\bm{D}^W(X)-\bm{I}_m)\bigotimes\bm{I}_{d_{\rho}} \right)\right),
\end{align}
where $\bm{D}^W(X):=(d_{ij}^W)_{ij}$ with
\begin{align}
d_{ij}^W:=\begin{cases}
\displaystyle\sum_{\substack{e\in \E(X)\\ o(e)=v_i}}w(e)&(\text{if }i=j),\\
0&(\text{otherwise}),
\end{cases}
\end{align}
and, for $\sigma\in G$, $\bm{W}(\sigma)=\bm{W}_{\alpha}(\sigma):=(w_{ij}^{(\sigma)})_{ij}$ with
\begin{align}
w_{ij}^{(\sigma)}:=\displaystyle\sum_{\substack{e\in\E(X),\alpha(e)=\sigma\\ o(e)=v_i,t(e)=v_j}} w(e).
\end{align}
\end{thm}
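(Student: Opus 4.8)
The plan is to reduce the assertion to a single polynomial identity among three‑term determinants and then read off the weighted complexities from the behaviour at $t=1$, dealing with the degenerate locus (total weight equal to the number of vertices) by a density argument at the very end. First, since $G$ is abelian, every irreducible representation is a character $\psi\in\widehat{G}$ of degree $d_\psi=1$, so Theorem \ref{thm:Sato_prod} gives $\zeta(X(G,\alpha),t)^{-1}=\prod_{\psi\in\widehat{G}}L(X,t,\psi,\alpha)^{-1}$. Applying the three‑term determinant formula (Theorem \ref{thm:weighted_L_3term}) to each factor, and separately to $\zeta(X(G,\alpha),t)^{-1}$ via the trivial representation, I would cancel the factors $(1-t^2)^{-\chi}$ using $\chi(X(G,\alpha))=|G|\chi(X)$. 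Writing $h_Z(1,t):=\det\bigl(\bm I-t\bm W(Z)+t^2(\bm D^W(Z)-\bm I)\bigr)$, this leaves the key identity of polynomials in $t$,
\[
h_{X(G,\alpha)}(1,t,1_{\mathrm{vol}})=\prod_{\psi\in\widehat{G}}h_X(\psi,t,\alpha)=h_X(1,t,\alpha)\!\!\prod_{\psi\in\widehat{G}\setminus\{1\}}\!\!h_X(\psi,t,\alpha).
\]

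Next I would extract $\kappa^W$ from the order‑one behaviour at $t=1$. Setting $\tau(Z):=\sum_{e\in\E(Z)}w(e)$, one has $h_Z(1,1)=\det(\bm D^W(Z)-\bm W(Z))=0$ because the weighted Laplacian has zero row sums, and the weighted Matrix–Tree theorem recalled in \S\ref{sec:weighted_complexities} (via the adjugate of the Laplacian) would give
\[
\frac{d}{dt}\Big|_{t=1}h_Z(1,t)=\pm\bigl(\tau(Z)-2\,\#V(Z)\bigr)\,\kappa^W(Z).
\]
Thus the zero at $t=1$ is simple precisely when $\tau(Z)\neq 2\,\#V(Z)$, i.e.\ when the total weight differs from the number of vertices.

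Assume now $\tau(X)\neq 2\,\#V(X)$. Then both $h_{X(G,\alpha)}(1,t)$ and its $\psi=1$ factor vanish to order one at $t=1$, so differentiating the boxed identity at $t=1$ makes the Leibniz rule collapse to
\[
\frac{d}{dt}\Big|_{t=1}h_{X(G,\alpha)}(1,t)=\Bigl(\frac{d}{dt}\Big|_{t=1}h_X(1,t,\alpha)\Bigr)\prod_{\psi\neq 1}h_X(\psi,1,\alpha).
\]
Substituting the two Matrix–Tree relations and using $\tau(X(G,\alpha))=|G|\,\tau(X)$ together with $\#V(X(G,\alpha))=|G|\,\#V(X)$, the common degeneracy factor $\tau(X)-2\,\#V(X)$ cancels on both sides, leaving exactly $\kappa^W(X(G,\alpha))=\frac{\kappa^W(X)}{|G|}\prod_{\psi\neq 1}h_X(\psi,1,\alpha)$.

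The main obstacle is the degenerate case $\tau(X)=2\,\#V(X)$, which is precisely the hypothesis Mizuno–Sato imposed: there the derivative identity above degenerates to $0=0$ and gives no information. I would remove it by a polynomial‑identity argument. With $X$, $G$ and $\alpha$ held fixed, both sides of the asserted formula are polynomial functions of the edge weights $\{w(e)\}_{e\in\E(X)}$, since $\kappa^W(X)$, $\kappa^W(X(G,\alpha))$ are cofactors of the respective weighted Laplacians and each $h_X(\psi,1,\alpha)=\det(\bm D^W-\sum_\sigma\bm W(\sigma)\psi(\sigma))$ is a determinant with entries linear in the weights; moreover connectedness of $X(G,\alpha)$ depends only on $\alpha$, not on the weights. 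The locus $\tau(X)\neq 2\,\#V(X)$ is a nonempty Zariski‑open subset (rescale all weights by a generic scalar), on which the identity has just been established; hence it holds identically, in particular when $\tau(X)=2\,\#V(X)$. The points that will require genuine care are the order‑of‑vanishing computation — that the weighted Laplacian of a connected graph has a simple kernel, so that its adjugate is the rank‑one matrix governed by $\kappa^W$ — and matching the sign and normalisation constant to the definition of $\kappa^W$ in \S\ref{sec:weighted_complexities}; the new input beyond Mizuno–Sato is exactly the density step.
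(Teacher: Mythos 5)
You have proved the wrong statement, and the argument is circular as a proof of the one under review. The theorem in question is the three-term determinant formula itself: by Definition \ref{def:Sato'L}, $L(X,t,\rho,\alpha)^{-1}$ is the $2ld_{\rho}\times 2ld_{\rho}$ determinant $\det\bigl(\bm{I}_{2ld_{\rho}}-t\sum_{\sigma\in G}(\bm{B}_{\sigma}-\bm{C}_{\sigma})\bigotimes\rho(\sigma)\bigr)$ built from the \emph{edge} matrices $\bm{B}_{\sigma},\bm{C}_{\sigma}$, and the content of the theorem is that this equals $(1-t^2)^{-\chi(X)d_{\rho}}$ times an $md_{\rho}\times md_{\rho}$ \emph{vertex} determinant, for an arbitrary finite group $G$ and an arbitrary unitary representation $\rho$. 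Your proposal never touches $\bm{B}_{\sigma}$ or $\bm{C}_{\sigma}$ and never relates a $2ld_{\rho}$-dimensional determinant to an $md_{\rho}$-dimensional one; indeed, in your first step you explicitly \emph{invoke} ``the three-term determinant formula (Theorem \ref{thm:weighted_L_3term})'' as a known tool, i.e.\ you assume the statement you were asked to prove. (You also restrict at the outset to abelian $G$ and characters $\psi$, which the statement does not.) What you have actually sketched is a proof of Theorem \ref{thm:recurrence_formula}, the recurrence $\kappa^W(X(G,\alpha))=\frac{\kappa^W(X)}{|G|}\prod_{\psi\in\widehat{G}\setminus\{1\}}h_X(\psi,1,\alpha)$ — a different result of the paper. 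A genuine proof of the present statement requires a Bass--Hashimoto-type block-matrix factorization passing from the edge-indexed matrix to the vertex-indexed one; this is how \cite[Theorem 6]{Sat07} proceeds, and the paper does not reprove it but cites that result, noting only that the purely algebraic determinant identity survives the replacement of the coefficient field $\mathbb{C}$ by $\bar{\Q}_p$. None of that machinery appears anywhere in your proposal, so as a proof of Theorem \ref{thm:weighted_L_3term} there is a complete gap.

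For what it is worth, read as an argument for Theorem \ref{thm:recurrence_formula} your sketch is plausible and genuinely different from the paper's: you differentiate $h_{X(G,\alpha)}(1_{\widehat{G}},t)=\prod_{\psi}h_X(\psi,t,\alpha)$ at $t=1$, use the adjugate form of the weighted matrix-tree theorem (Theorem \ref{thm:weighted_matrix_tree}) to evaluate $\frac{d}{dt}\big|_{t=1}h_Z$ as $(\tau(Z)-2\,\#V(Z))\,\kappa^W(Z)$ up to sign, and dispose of the degenerate locus $\tau(X)=2\,\#V(X)$ — exactly the hypothesis the paper removes from \cite[Corollary 1]{MS03} — by Zariski density in the space of symmetric weights; the paper instead block-diagonalizes the regular representation via Kronecker products of DFT matrices and computes the relevant cofactor with a commutation matrix, Lemma \ref{lem:permutationmatrix}, and the Cauchy--Binet theorem. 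But that comparison concerns a different theorem: as submitted, your proposal does not prove, or even engage with, the three-term determinant formula.
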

Note that we have $\bm{W}(X)=\sum_{\sigma\in G}\bm{W}(\sigma)$.

Also, Mizuno--Sato $L$-functions have the forms of Euler products.
\begin{thm}[{\cite[Theorem 9]{MS14}} with $\mathbb{C}$ replaced by $\bar{\Q}_p$]\label{thm:Sato_Euler}
Under the same setting as Definition \ref{def:Sato'L}, we have
\begin{align}\label{eq:Euler_prod}
L(X,t,\rho,\alpha)=\prod_{[C]}\det\left(\bm{I}_d-\rho(\alpha(C))b_w(C)t^{|C|}\right)^{-1},
\end{align}
where $[C]$ runs over all equivalence classes of prime paths of $X$, and
\begin{align}
C=e_1e_2\cdots e_r,\ \alpha(C):=\prod_{i=1}^r\alpha(e_i),\ b_w(C):=\prod_{i=1}^r(w(e_{i+1})-\delta_{\bar{e}_ie_{i+1}}).
\end{align}
Here $\delta$ stands for the Kronecker delta.
\end{thm}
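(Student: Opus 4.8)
The plan is to work entirely inside $\bar{\Q}_p\llbracket t\rrbracket$ and to compare the $t$-adic logarithms of the two sides. Writing $\bm{M}:=\sum_{\sigma\in G}(\bm{B}_{\sigma}-\bm{C}_{\sigma})\otimes\rho(\sigma)$, the definition gives $L(X,t,\rho,\alpha)^{-1}=\det(\bm{I}_{2ld_{\rho}}-t\bm{M})$, a power series with constant term $1$. First I would invoke the standard formal identity $\log\det(\bm{I}_{2ld_{\rho}}-t\bm{M})=-\sum_{m\geq1}\frac{t^m}{m}\tra(\bm{M}^m)$, which yields $\log L(X,t,\rho,\alpha)=\sum_{m\geq1}\frac{t^m}{m}\tra(\bm{M}^m)$. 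On the right-hand side of the theorem, each factor $\det(\bm{I}_{d_{\rho}}-\rho(\alpha(C))b_w(C)t^{|C|})^{-1}$ contributes only monomials $t^{k|C|}$ with $k\geq1$ and $|C|\geq1$, so the product is $t$-adically convergent and has a well-defined logarithm; the theorem then reduces to matching these two logarithms.

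Next I would evaluate $\tra(\bm{M}^m)$ by exploiting the block structure of $\bm{M}$. Indexing rows and columns by $\E(X)$ and regarding $\bm{M}$ as a matrix of $d_{\rho}\times d_{\rho}$ blocks, the $(e_i,e_j)$-block equals $[\,t(e_i)=o(e_j)\,]\,(w(e_j)-\delta_{\bar{e}_ie_j})\,\rho(\alpha(e_i))$: for each ordered pair only the single element $\sigma=\alpha(e_i)$ contributes to $\bm{B}_{\sigma}-\bm{C}_{\sigma}$, and the condition $\bar{e}_i=e_j$ already forces $t(e_i)=o(e_j)$ (here the bracket denotes the indicator). Expanding $\tra(\bm{M}^m)=\sum_{i_1,\dots,i_m}\tra(\bm{M}_{i_1i_2}\cdots\bm{M}_{i_mi_1})$, a tuple contributes a nonzero term precisely when $e_{i_1}\cdots e_{i_m}$ is a closed path $C$ of length $m$; since each block is a scalar times $\rho(\alpha(e_{i_s}))$, such a term factors as $b_w(C)\,\tra(\rho(\alpha(e_{i_1}))\cdots\rho(\alpha(e_{i_m})))=b_w(C)\,\tra(\rho(\alpha(C)))$. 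Therefore $\tra(\bm{M}^m)=\sum_{|C|=m}b_w(C)\,\tra(\rho(\alpha(C)))$, the sum ranging over all closed paths of length $m$.

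Then I would reorganize $\sum_{m}\frac{t^m}{m}\tra(\bm{M}^m)$ according to the primitive-root decomposition of closed paths. Both $b_w(C)$ and $\tra(\rho(\alpha(C)))$ are invariant under cyclic rotation of $C$, the latter because rotation replaces $\alpha(C)$ by a conjugate. Every closed path of length $m$ is uniquely the $k$-th power of a primitive closed path, and each equivalence class $[P]$ of length $|P|$ contributes exactly $|P|$ based representatives of $P^k$ among the closed paths of length $m=k|P|$; combined with $b_w(P^k)=b_w(P)^k$ and $\alpha(P^k)=\alpha(P)^k$, the factor $|P|$ cancels the denominator and gives
\begin{align}
\sum_{m\geq1}\frac{t^m}{m}\tra(\bm{M}^m)=\sum_{[P]}\sum_{k\geq1}\frac{1}{k}\bigl(b_w(P)t^{|P|}\bigr)^k\,\tra\bigl(\rho(\alpha(P)^k)\bigr),
\end{align}
where $[P]$ ranges over equivalence classes of prime paths. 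Applying the elementary power-series identity $\sum_{k\geq1}\frac{x^k}{k}\tra(\rho(g^k))=-\log\det(\bm{I}_{d_{\rho}}-x\rho(g))$ with $g=\alpha(P)$ and $x=b_w(P)t^{|P|}$ converts the inner sum into $-\log\det(\bm{I}_{d_{\rho}}-\rho(\alpha(P))b_w(P)t^{|P|})$; summing over $[P]$ and exponentiating produces the asserted Euler product.

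The main obstacle is the combinatorial bookkeeping in the third step. One must establish the bijection between cyclic-rotation orbits of based closed paths of length $m$ and pairs $([P],k)$ with $k|P|=m$, and check that primitivity of $P$ is exactly what makes its $|P|$ rotations distinct, so that the multiplicity is $|P|$ and the cancellation with $1/m$ is clean. One must also verify $b_w(P^k)=b_w(P)^k$: this hinges on the cyclic convention for the final factor of $b_w$ coinciding with the junction factor between successive copies of $P$, and it is precisely the Kronecker-delta corrections coming from $\bm{C}_{\sigma}$ that make this hold uniformly (and that, for trivial weights, suppress the backtracking contributions). The analytic steps—the $\log\det$ trace expansion and the scalar identity for $\det(\bm{I}_{d_{\rho}}-x\rho(g))$—are routine once the combinatorics is in place.
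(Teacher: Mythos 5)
The paper contains no internal proof of Theorem \ref{thm:Sato_Euler} for you to be measured against: the statement is imported from \cite[Theorem 9]{MS14} with $\mathbb{C}$ replaced by $\bar{\Q}_p$, the authors remarking only that the arguments carry over to the $p$-adic setting. What you have written is a correct, self-contained rendition of the standard argument, and in substance it is the same computation that underlies \cite{MS14} and that the paper itself runs in reverse when it derives Theorem \ref{thm:property_SatoL}(i) from the Euler product. Your individual steps all check out: the formal identity $\log\det(\bm{I}-t\bm{M})^{-1}=\sum_{m\geq 1}\frac{t^m}{m}\tra(\bm{M}^m)$ is a polynomial identity valid in $\bar{\Q}_p\llbracket t\rrbracket$; the block formula $\bm{M}_{ij}=[t(e_i)=o(e_j)]\,(w(e_j)-\delta_{\bar{e}_ie_j})\,\rho(\alpha(e_i))$ is right, including the reduction to the single contributing $\sigma=\alpha(e_i)$ and the implication $\bar{e}_i=e_j\Rightarrow t(e_i)=o(e_j)$, which follows from $o(e)=t(\bar{e})$ and $\bar{\bar{e}}=e$; the trace expansion $\tra(\bm{M}^m)=\sum_{|C|=m}b_w(C)\tra(\rho(\alpha(C)))$ holds with $b_w$ read cyclically (the convention $e_{r+1}:=e_1$, which is indeed the only sensible reading of the displayed formula); and the cyclic-word bookkeeping ($b_w(P^k)=b_w(P)^k$ via the junction factors, $\alpha(P^k)=\alpha(P)^k$, rotation-invariance of $\tra\rho(\alpha(C))$ by conjugacy, and exactly $|P|$ distinct based rotations of $P^k$ when $P$ is primitive) is the correct cancellation against $1/m$.

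One point in your write-up should be made explicit rather than left implicit. Your decomposition runs over equivalence classes of \emph{primitive} closed paths --- those that are not proper powers --- with backtracks and tails allowed, and that is the correct index set: the Kronecker-delta corrections in $b_w$ exist precisely because such classes contribute when the weights are nontrivial, as your final paragraph acknowledges. This does \emph{not} agree with the paper's own definition of a prime closed path in \S\ref{sec:graph}, which additionally requires no backtracks and no tails; under that stricter reading the identity is false in general. For instance, for two vertices joined by a single edge $e$ and trivial $\rho$, one computes $\det(\bm{I}_2-t\bm{M})=1-(w(e)-1)(w(\bar{e})-1)t^2$, while the graph has no prime paths in the strict sense, so the strict Euler product would be $1$; the discrepancy is accounted for exactly by the primitive class $[e\bar{e}]$ with $b_w(e\bar{e})=(w(e)-1)(w(\bar{e})-1)$. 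So where you assert that $[P]$ ranges over ``equivalence classes of prime paths,'' you should state that ``prime'' must here mean only condition (3) of that definition (not a proper power). This is a defect of the paper's statement as transcribed, not of your argument, whose combinatorics silently --- and correctly --- uses the primitive reading.
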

\begin{rem}
The right-hand side of the equation \eqref{eq:Euler_prod} is well-defined even when $G$ is not abelian. In this case, although $\alpha(C_1)$ and $\alpha(C_2)$ are not necessarily equal for two prime paths $C_1$ and $C_2$ such that $[C_1]=[C_2]$, we find $\alpha(C_1)$ and $\alpha(C_2)$ are conjugate to each other. Thus we see that the determinant in \eqref{eq:Euler_prod} does not depend on the choice of a representative in the equivalence class.
\end{rem}
We prove the following theorems by using the Euler product-form of Mizuno--Sato $L$-functions.
\begin{thm}\label{thm:property_SatoL}
Under the same setting as Definition \ref{def:Sato'L}, we have the followings.

(i) Let $\rho_i$ $(i=1,2)$ be representations of $G$. Then we have
\begin{align}\label{thm:sum_prod}
L(X,t,\rho_1\oplus\rho_2,\alpha)=L(X,t,\rho_1,\alpha)L(X,t,\rho_2,\alpha).
\end{align}

(ii) Let $H$ be a subgroup of $G$, $\tilde{X}\slash X$ the intermediate cover corresponding to $H$, and $\beta:\bm{E}(\tilde{X})\to H$ a voltage assignment corresponding to $X(G, \alpha)\slash \tilde{X}$ (See \cite[Theorem 8]{Gon21}). If $\rho$ is a representation of $H$, then it holds that
\begin{align}\label{eq:Sato_Induction}
L(X,t,\indu_H^G\rho,\alpha)=L(\tilde{X},t,\rho,\beta). 
\end{align}  
\end{thm}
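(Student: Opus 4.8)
The plan is to deduce both statements from the Euler product expansion of Theorem \ref{thm:Sato_Euler}, in which the local factor attached to a prime path $C$ depends on the representation only through the matrix $\rho(\alpha(C))$.

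For part (i), if $\rho = \rho_1 \oplus \rho_2$ then for every prime path $C$ the matrix $\rho(\alpha(C))$ is block-diagonal with diagonal blocks $\rho_1(\alpha(C))$ and $\rho_2(\alpha(C))$, so the local factor splits as the product of the corresponding local factors for $\rho_1$ and $\rho_2$. Multiplying over all equivalence classes $[C]$ then gives \eqref{thm:sum_prod} at once.

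For part (ii) I would adapt the classical Artin formalism to the weighted graph setting. Fixing coset representatives $G = \bigsqcup_{i=1}^{r} s_i H$ with $r = [G:H]$, and writing $g s_i = s_{\sigma(i)} h_i$ with $h_i \in H$ for $g \in G$, the decomposition of the coset permutation $\sigma$ into cycles yields, for any scalar $x$,
\[
\det\bigl(\bm{I}_{r d_\rho} - (\indu_H^G \rho)(g)\, x\bigr) = \prod_{(i_1 \cdots i_k)} \det\bigl(\bm{I}_{d_\rho} - \rho(h_{i_k}\cdots h_{i_1})\, x^{k}\bigr),
\]
the product running over the cycles of $\sigma$, with $h_{i_k}\cdots h_{i_1} \in H$ the holonomy element around the cycle. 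Applying this with $g = \alpha(C)$ and $x = b_w(C)\, t^{|C|}$ for a prime path $C$ of $X$, I would match each cycle of length $k$ with the unique prime path $\tilde C$ of $\tilde X$ lying over $C$ that winds $k$ times, whose voltage is $\beta(\tilde C) = h_{i_k}\cdots h_{i_1}$ and whose length is $|\tilde C| = k|C|$; because the weight function and the backtracking data on $\tilde X$ are pulled back from $X$, the weight satisfies $b_w(\tilde C) = b_w(C)^{k}$, so that $x^{k} = b_w(\tilde C)\, t^{|\tilde C|}$. This identifies the local factor of $L(X,t,\indu_H^G\rho,\alpha)^{-1}$ at $[C]$ with the product of the local factors of $L(\tilde X,t,\rho,\beta)^{-1}$ over the primes $[\tilde C]$ above $C$; taking the product over all $[C]$ and using that every prime path of $\tilde X$ lies over a unique prime path of $X$ yields \eqref{eq:Sato_Induction}.

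I expect the main obstacle to be the precise bookkeeping of prime splitting in the cover $\tilde X / X$: establishing the bijection between cycles of $\sigma$ and prime paths of $\tilde X$ above $C$, identifying the holonomy element with the induced voltage $\beta(\tilde C)$, and verifying the weight multiplicativity $b_w(\tilde C) = b_w(C)^{k}$. This rests on a careful description of how closed paths lift through the voltage cover $X(G,\alpha) \to \tilde X \to X$ and how the induced voltage assignment $\beta$ on $\tilde X$ arises from $\alpha$, which I would isolate as a preliminary lemma before assembling the Euler products.
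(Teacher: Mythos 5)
Your proposal is correct and follows essentially the same route as the paper: both arguments rest on the Euler product of Theorem \ref{thm:Sato_Euler}, with (i) reduced to the factor-by-factor behaviour of the local determinants (the paper phrases this via $\tra\log$ and additivity of traces under $\oplus$, you via block-diagonal determinant factorization, which are equivalent) and (ii) reduced to the classical Artin formalism for induced representations. The only difference is one of presentation: for (ii) the paper simply observes that $\alpha(C)$ is the normalized Frobenius and cites the proof of \cite[Theorem 19.4(2),(4)]{Ter11}, whereas you spell out that argument (the cycle decomposition of the coset permutation, the matching of cycles with primes of $\tilde{X}$ above $C$, and the multiplicativity $b_w(\tilde{C})=b_w(C)^k$), all of which is the correct content of the cited proof adapted to the weighted setting.
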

\begin{proof}
(i) By imitating the proof of \cite[Theorem 19.4]{Ter11}, we can easily prove (i). Indeed, by combining Theorem \ref{thm:Sato_Euler} with $\tra \log A=\log \det A$ for a matrix $A$ (cf. \cite[Exercise 4.1]{Ter11}), we have
\begin{align}
&\log L(X,t,\rho_1\oplus\rho_2,\alpha)\\
&=-\sum_{[C]}\tra\log\left(\bm{I}_{d_{\rho_1\oplus\rho_2}}-(\rho_1\oplus\rho_2)(\alpha(C)))b_w(C)t^{|C|}\right)\\
&=\sum_{C}\sum_{j\geq 1}\frac{1}{|C^j|}\tra((\rho_1\oplus\rho_2)(\alpha(C^j)))b_w(C^j)t^{|C^j|}\\
&=\sum_{C}\sum_{j\geq 1}\frac{1}{|C^j|}\tra(\rho_1(\alpha(C^j)))b_w(C^j)t^{|C^j|}+\sum_{C}\sum_{j\geq 1}\frac{1}{|C^j|}\tra(\rho_2(\alpha(C^j)))b_w(C^j)t^{|C^j|}\\
&=-\sum_{[C]}\tra\log\left(\bm{I}_{d_{\rho_1}}-(\rho_1(\alpha(C)))b_w(C)t^{|C|}\right)-\sum_{[C]}\tra\log\left(\bm{I}_{d_{\rho_2}}-(\rho_2((\alpha(C)))b_w(C)t^{|C|}\right)\\
&=\log L(X,t,\rho_1,\alpha)+\log L(X,t,\rho_2,\alpha)\\
&=\log (L(X,t,\rho_1,\alpha) L(X,t,\rho_2,\alpha)),
\end{align}
where $C$ runs over all prime paths (resp. equivalence classes of prime paths) of $X$ in $\sum_C$ (resp. $\sum_{[C]}$). Here, we used the equality $\tra(\rho_1\oplus\rho_2)=\tra\rho_1+\tra\rho_2$.

(ii) Note that $\alpha(C)$ in Theorem \ref{thm:Sato_Euler} is the normalized Frobenius element in the sense of \cite[Definition 16.1]{Ter11}. Since we have an Euler product-form for Mizuno--Sato $L$-functions by Theorem \ref{thm:Sato_Euler}, we can apply the proof of a similar proposition for edge Artin $L$-functions in \cite[Theorem 19.4(2),(4)]{Ter11}. 
\end{proof}
\subsection{Weighted complexities}\label{sec:weighted_complexities}
In this subsection, we recall the definition of \textit{weighted complexities}, which is a generalization of the number of spanning trees. Reference is \cite{MS03}. In subsequent sections, we investigate the asymptotic behaviors of the weighted complexities in $\Z_p^d$-towers of weighted graphs.
\begin{defi}
Let $X$ be a connected and weighted symmetric digraph with symmetric weighted matrix. We define the \emph{weighted comlexity} $\kappa^W(X)$ of $X$ by
\begin{align} 
\kappa^W(X):=\sum_T\left(\prod_{e\in \E(T)} w(e)\right),
\end{align}
where $T$ runs over the spanning arborescences of $X$ rooted at some fixed $v_0\in V(X)$ and $e$ runs over the edges of $T$ (See \cite[p.19]{MS03} for the cases of simple graphs). By the symmetry of the weighted matrix, we find that $\kappa^W(X)$ is independent of the choice of $v_0$ and is well-defined. If $X$ is a weighted undirected graph and $D_X$ is a weighted symmetric digraph derived from $X$, then define $\kappa^W(X):=\kappa^W(D_X)$. $\kappa^W(X)$ is also called the \emph{weighted complexity} of $X$. If $X$ is a weighted undirected graph with trivial weights, then, by definition, $\kappa^W(X)$ coincides with the number of the spanning trees of $X$.
\end{defi}
By the definition of weighted complexities, we have the following example immediately.
\begin{ex}
We fix an integer $R\geq2$ and put $\zeta_R:=\exp(2\pi\sqrt{-1}\slash R)\in\bar{\Q}$. Let $X$ be a weighted undirected graph with weight function $w$. We assume that the image of $w$ is in $\{\zeta_R^r\in \bar{\Q}\ |\ r=0,\ldots,R-1\}$. We put, for each spanning tree $T$ of $X$,
\begin{align}
w(T):=\prod_{e\in E(T)}w(e).
\end{align}
Then we have
\begin{align}
\kappa^W(X)=\sum_{r=0}^{R-1}\zeta_R^r\cdot\kappa^W(X,r),
\end{align}
where $\kappa^W(X,r):=\#\{T\text{ is a spanning tree of }X\ |\ w(T)=\zeta_R^r\}$. In particular, if $R=2$, then
\[
\kappa^W(X)=(\mbox{the number of positive spanning trees})-(\mbox{the number of negative spanning trees}).
\]
\end{ex}
For a matrix $\bm{A}$, Let $\bm{A}(k_1,k_2)$ denote the submatrix of $\bm{A}$ with row $k_1$ and column $k_2$ deleted. If 
$k_1$ and $k_2$ coincide, i.e., $k_1=k=k_2$, then we merely denote $\bm{A}(k_1,k_2)$ by $\bm{A}(k)$.
\begin{thm}[{\cite[p. 319]{Cha82}}, see also {\cite[\S5]{Orl78}}, \cite{Tut48}, and \cite{Mal24}]\label{thm:weighted_matrix_tree}
Let $X$ be a weighted symmetric digraph. Assume that the weighted matrix $\bm{W}(X)$ is symmetric. Then
\[
\det{((\bm{D}^W(X)-\bm{W}(X))(k_1,k_2))}=(-1)^{k_1+k_2}\kappa^W(X).
\]
for every $1\leq k_1,k_2\leq m$. In particular, we have
\[
\det{((\bm{D}^W(X)-\bm{W}(X))(k))}=\kappa^W(X)
\]
for every $1\leq k\leq m$.
\end{thm}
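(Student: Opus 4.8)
The plan is to pass to the weighted Laplacian $\bm{L}:=\bm{D}^W(X)-\bm{W}(X)$, establish the diagonal identity first, and then bootstrap to the general $(k_1,k_2)$ statement by linear algebra. Two structural facts drive everything. First, every row sum of $\bm{L}$ vanishes: the row sum of the diagonal matrix $\bm{D}^W(X)$ in row $i$ is $d^W_{ii}$, which equals the $i$-th row sum $\sum_j w_{ij}$ of $\bm{W}(X)$, since both are the total weight $\sum_{o(e)=v_i}w(e)$ of edges leaving $v_i$; hence $\bm{L}\mathbf{1}=\bm{0}$ and in particular $\det\bm{L}=0$. Second, because $\bm{W}(X)$ is assumed symmetric and $\bm{D}^W(X)$ is diagonal, $\bm{L}$ is symmetric, so its column sums vanish too, i.e. the left null space of $\bm{L}$ is also spanned by $\mathbf{1}$.

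For the diagonal case $\det(\bm{L}(k))=\kappa^W(X)$ I would invoke the classical weighted directed matrix--tree theorem. Both sides are polynomials in the edge weights with integer coefficients, so the identity is formal and valid over any commutative ring; the fact that the weights lie in $\bar{\Q}_p$ causes no difficulty whatsoever. Concretely, expanding $\det(\bm{L}(k))$ by the Leibniz formula and distributing each matrix entry over the edges it sums produces a signed sum indexed by the subgraphs in which every vertex other than $v_k$ selects one outgoing edge; a sign-reversing involution that reverses the edges of the lexicographically-first directed cycle cancels all configurations that contain a cycle, and the survivors are exactly the spanning arborescences rooted at $v_k$, each surviving with coefficient $+\prod_{e\in\E(T)}w(e)$. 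By symmetry of $\bm{W}(X)$ this weight sum is independent of $k$ and equals $\kappa^W(X)$, as already recorded in the definition of the weighted complexity. (Alternatively one simply cites Chaiken's all-minors matrix--tree theorem.)

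The general case then follows from the fact that $\bm{L}$ has both zero row sums and zero column sums. Write $C_{ij}=(-1)^{i+j}\det(\bm{L}(i,j))$ for the cofactors, so that the adjugate satisfies $(\mathrm{adj}\,\bm{L})_{ij}=C_{ji}$. The identities $\bm{L}\,\mathrm{adj}(\bm{L})=\det(\bm{L})\bm{I}_m=\bm{0}$ and $\mathrm{adj}(\bm{L})\,\bm{L}=\bm{0}$ say that every column of $\mathrm{adj}(\bm{L})$ lies in $\ker\bm{L}$ and every row lies in the left null space of $\bm{L}$. I would now split into two cases. If $\rank\bm{L}=m-1$, then $\ker\bm{L}=\bar{\Q}_p\mathbf{1}$ and the left null space is likewise spanned by $\mathbf{1}$, so each column of $\mathrm{adj}(\bm{L})$ is constant and each row is constant; a matrix that is constant along every row and every column is globally constant, hence all entries of $\mathrm{adj}(\bm{L})$ agree. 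If $\rank\bm{L}\le m-2$, then every $(m-1)\times(m-1)$ minor vanishes, so $\mathrm{adj}(\bm{L})=\bm{0}$ and, by the diagonal case, $\kappa^W(X)=\det(\bm{L}(k))=0$ too; again all entries of $\mathrm{adj}(\bm{L})$ agree, this time all being $0$. In either case $(-1)^{i+j}\det(\bm{L}(j,i))=C_{ji}=(\mathrm{adj}\,\bm{L})_{ij}$ is a single constant $c$; evaluating at $i=j=k$ gives $c=\det(\bm{L}(k))=\kappa^W(X)$, whence $\det(\bm{L}(k_1,k_2))=(-1)^{k_1+k_2}\kappa^W(X)$ for all $k_1,k_2$, which is the assertion.

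The step I expect to be the main obstacle is controlling the degeneracy of $\bm{L}$. Over $\bar{\Q}_p$ with arbitrary weights one may not assume that the nullity is exactly one, so the familiar ``all cofactors are equal'' lemma for Laplacians of connected graphs does not apply verbatim; it is precisely the case $\rank\bm{L}\le m-2$ that must be isolated, and there it is essential that the diagonal matrix--tree identity forces $\kappa^W(X)=0$ simultaneously, so that the claimed equality degenerates to $0=0$. A secondary point needing care is the orientation convention: the out-degree Laplacian naturally enumerates weighted arborescences for one orientation and one root, and it is exactly the symmetry hypothesis on $\bm{W}(X)$ that renders the orientation and the choice of root immaterial, reconciling the cofactor with $\kappa^W(X)$ as defined.
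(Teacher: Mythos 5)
Your argument is correct, but there is nothing in the paper to compare it against line by line: the paper does not prove Theorem \ref{thm:weighted_matrix_tree}, it imports it wholesale, the all-minors statement being precisely Chaiken's theorem (with Orlin, Tutte and Mallik cited for related versions). What you wrote is a legitimate self-contained substitute, and its structure is worth noting: you only need the classical principal-minor case of the weighted directed matrix--tree theorem, and you recover all off-diagonal minors from it by the adjugate argument --- $\bm{L}\,\mathrm{adj}(\bm{L})=\mathrm{adj}(\bm{L})\,\bm{L}=\det(\bm{L})\bm{I}_m=\bm{0}$ together with $\bm{L}\mathbf{1}=\bm{0}$ and the symmetry of $\bm{L}$ force $\mathrm{adj}(\bm{L})$ to be a constant matrix when $\rank\bm{L}=m-1$, and $\mathrm{adj}(\bm{L})=\bm{0}$ when $\rank\bm{L}\leq m-2$. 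Your explicit treatment of the degenerate case is genuinely needed: over $\bar{\Q}_p$ with arbitrary weights the nullity of $\bm{L}$ can exceed one even for connected $X$ (the first example of \S\ref{sec:examples} with $a+b=0$ gives $\bm{L}=\bm{0}$ on a connected two-vertex cover), so the usual ``all cofactors of a connected Laplacian are equal'' shortcut is unavailable, and your dichotomy also reproves, as a byproduct, the root-independence of $\kappa^W(X)$ that the paper's definition merely asserts. Two small imprecisions, neither fatal since you flag both and offer the citation as a fallback: the involution you sketch for the diagonal case (reversing the edges of a distinguished cycle) is not weight-preserving for a general digraph --- the standard cancellation is the signed sum over subsets of cycles of the functional digraph --- and one should state explicitly that the out-degree Laplacian minor $\det(\bm{L}(k))$ enumerates arborescences \emph{converging to} $v_k$, which matches the paper's diverging arborescences only after invoking $w_{ij}=w_{ji}$.
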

At the end of this section, we prepare a lemma for a proof in the subsequent section.
\begin{lem}\label{lem:permutationmatrix}
Let $K$ be a field. Let $\{\bm{e}_1,\ldots,\bm{e}_N\}$ be the standard basis of $K^N$ consisting of column vectors. Let $\bm{A}$ be an $N\times N$ matrix over $K$. Let $\bm{R}$ be an $N\times N$ permutation matrix over $K$ associated with a permutation $\varepsilon$ of $\{1,2,\ldots,N\}$, that is, $\bm{R}=(\bm{e}_{\varepsilon(1)},\ldots, \bm{e}_{\varepsilon(N)})^{\mathsf{T}}$. Put $i_0:=\varepsilon^{-1}(1)$. Then we have
\[
\det((\bm{R}^{\mathsf{T}}\bm{A}\bm{R})(1))=\det(\bm{A}(i_0)).
\]
\end{lem}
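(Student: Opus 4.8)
The plan is to compute the entries of the conjugate $\bm{R}^{\mathsf{T}}\bm{A}\bm{R}$ explicitly and then track what deleting its first row and column does to $\bm{A}$. First I would record that, with the convention $\bm{R}=(\bm{e}_{\varepsilon(1)},\ldots,\bm{e}_{\varepsilon(N)})^{\mathsf{T}}$, one has $(\bm{R})_{ij}=\delta_{\varepsilon(i),j}$ and hence $(\bm{R}^{\mathsf{T}})_{ij}=\delta_{\varepsilon(j),i}$. A direct two-step multiplication then yields
\[
(\bm{R}^{\mathsf{T}}\bm{A}\bm{R})_{ij}=A_{\varepsilon^{-1}(i),\,\varepsilon^{-1}(j)}\qquad(1\leq i,j\leq N),
\]
so that conjugation by $\bm{R}$ simply relabels both the rows and the columns of $\bm{A}$ by the permutation $\varepsilon^{-1}$.

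Writing $\bm{B}:=\bm{R}^{\mathsf{T}}\bm{A}\bm{R}$, the next step is to pass to $\bm{B}(1)$, whose rows and columns are indexed by $\{2,\ldots,N\}$ and whose entries are $(\bm{B}(1))_{ij}=A_{\varepsilon^{-1}(i),\varepsilon^{-1}(j)}$. As $i$ runs over $\{2,\ldots,N\}$, the index $\varepsilon^{-1}(i)$ runs over $\{1,\ldots,N\}\setminus\{\varepsilon^{-1}(1)\}=\{1,\ldots,N\}\setminus\{i_0\}$, which is exactly the index set surviving in $\bm{A}(i_0)$. Hence $\bm{B}(1)$ is obtained from $\bm{A}(i_0)$ by relabelling rows and columns through the bijection $\phi\colon\{2,\ldots,N\}\to\{1,\ldots,N\}\setminus\{i_0\}$, $\phi(i):=\varepsilon^{-1}(i)$, in the sense that $(\bm{B}(1))_{ij}=(\bm{A}(i_0))_{\phi(i),\phi(j)}$.

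To turn this into an equality of determinants, I would identify each of the two index sets $\{2,\ldots,N\}$ and $\{1,\ldots,N\}\setminus\{i_0\}$ with $\{1,\ldots,N-1\}$ via its unique order-preserving bijection. Under these identifications $\phi$ becomes a genuine permutation $\pi$ of $\{1,\ldots,N-1\}$, and the relation above reads $\bm{B}(1)=\bm{P}^{\mathsf{T}}\,\bm{A}(i_0)\,\bm{P}$, where $\bm{P}$ is the permutation matrix of $\pi$. Taking determinants gives $\det(\bm{B}(1))=\det(\bm{P})^2\det(\bm{A}(i_0))=\det(\bm{A}(i_0))$, since $\det(\bm{P})=\pm1$.

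The entry computation is routine, so the one step I would treat with care is this final index bookkeeping, which is also where a spurious sign could appear. The crucial point is that deleting row $1$ and column $1$ \emph{simultaneously} forces one and the same permutation $\pi$ on the rows and on the columns of the surviving submatrix; this is what makes the Leibniz sign occur squared and leaves the determinant genuinely unchanged. Had we deleted a row and a column carrying different indices, the two relabellings would be governed by distinct permutations and a nontrivial sign could survive, matching the factor $(-1)^{k_1+k_2}$ seen in Theorem \ref{thm:weighted_matrix_tree}.
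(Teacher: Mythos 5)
Your argument is correct, but it proceeds along a genuinely different route from the paper. You compute the entries of $\bm{R}^{\mathsf{T}}\bm{A}\bm{R}$ directly, observe that conjugation by $\bm{R}$ relabels rows and columns simultaneously by $\varepsilon^{-1}$, and conclude that deleting the first row and column of the conjugate is, up to one and the same permutation applied to rows and columns, the same as deleting row and column $i_0$ of $\bm{A}$; the determinant is then unchanged because the Leibniz sign enters squared. The paper instead applies the Cauchy--Binet theorem to $\det((\bm{R}^{\mathsf{T}}\bm{A}\bm{R})(1))$, expanding it as $\sum_{k_1,k_2}\det(\bm{R}^{\mathsf{T}}(1,k_1))\det(\bm{A}(k_1,k_2))\det(\bm{R}(k_2,1))$ and noting that the only surviving term is $k_1=k_2=i_0$, with $\det(\bm{R}^{\mathsf{T}}(1,i_0))=\det(\bm{R}(i_0,1))=\pm 1$ so that the product of the two signs is $+1$. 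Your version is more elementary and makes the absence of a sign completely transparent (your closing remark correctly isolates why a sign of the type $(-1)^{k_1+k_2}$ in Theorem \ref{thm:weighted_matrix_tree} cannot appear here); its only cost is the index bookkeeping in the identification of $\{2,\ldots,N\}$ with $\{1,\ldots,N\}\setminus\{i_0\}$, which you handle correctly. The paper's Cauchy--Binet route has the advantage of being the same expansion technique reused in the proof of Theorem \ref{thm:recurrence_formula}, where the conjugating matrices $\bm{Q}$, $\bm{Q}'$ are no longer permutation matrices and a direct entrywise relabelling argument would not apply.
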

\begin{proof}
Since $\bm{R}^{\mathsf{T}}=(\bm{e}_{\varepsilon(1)},\ldots,\bm{e}_{\varepsilon(N)})$, we find that, for $1\leq k_1\leq N$ with $\varepsilon(k_1)\neq 1$, the matrix $\bm{R}^{\mathsf{T}}$ with $k_1$-th column deleted has a row consisting of only $0$ entries. Therefore, we have
\[
\det(\bm{R}^{\mathsf{T}}(1,k_1))=
\begin{cases}
\pm 1&\mbox{if }\varepsilon(k_1)=1,\\
0&\mbox{if }\varepsilon(k_1)\neq 1.
\end{cases}
\]
Likewise, we have
\[
\det(\bm{R}(k_2,1))=
\begin{cases}
\pm 1&\mbox{if }\varepsilon(k_2)=1,\\
0&\mbox{if }\varepsilon(k_2)\neq 1.
\end{cases}
\]
Since $i_0=\varepsilon^{-1}(1)$, we have
\[
\det(\bm{R}^{\mathsf{T}}(1,i_0))=\pm 1=\det(\bm{R}(i_0,1)).
\]
By the Cauchy--Binet theorem, we obtain
\begin{align}
\det((\bm{R}^{\mathsf{T}}\bm{A}\bm{R})(1))&=\sum_{1\leq k_1,k_2\leq N} \det(\bm{R}^{\mathsf{T}}(1,k_1))\det(\bm{A}(k_1,k_2))\det(\bm{R}(k_2,1))\\
&=\det(\bm{R}^{\mathsf{T}}(1,i_0))\det(\bm{A}(i_0))\det(\bm{R}(i_0,1))\\
&=\det(\bm{A}(i_0)).
\end{align}
\end{proof}
\section{Weighted Iwasawa's class number formula}\label{sec:ICNF}
In this section, we prove our first and fourth main results.
\subsection{Recurrence relation of weighted complexities in Galois coverings of weighted graphs}
In this subsection, we generalize a result of Mizuno--Sato \cite[Corollary 1]{MS03}, which is about a recurrence relation of weighted complexities for a voltage cover. First, we impose the following assumption throughout this paper except for \S\ref{sec:quantum_walks}.
\begin{assump}\label{assum:symmetric}
The weighted matrix of every weighted symmetric digraph is symmetric.
\end{assump}
This assumption is reasonable. In fact, we can construct an example of a covering $Y\slash X$ of weighted symmetric digraphs such that $\bm{W}(X)$ is symmetric but $\bm{W}(Y)$ is not symmetric. Let $X$ be a weighted symmetric digraph described as follows.
\begin{equation*}
\begin{minipage}[c]{0.3\columnwidth}
\centering
\begin{tikzpicture}
 \fill (0,0) circle[radius=2pt] node[right] {$v$};
\draw[red,decoration={markings,mark=at position \halfway with \arrow{>}},postaction=decorate] (0,0)  to [in=50,out=130,loop] node[above]{$e_1$} (0,0);
\draw[blue,decoration={markings,mark=at position \halfway with \arrow{>}},postaction=decorate] (0,0) to [in=230,out=310,loop] node[below]{$e_2$} (0,0);
\end{tikzpicture}
\end{minipage}
\end{equation*}
We assign the weights by $w(e_1)=a$, $w(\bar{e}_1)=\bar{a}$, $w(e_2)=b$, and $w(\bar{e}_2)=\bar{b}$ ($a$, $\bar{a}$, $b$, and $\bar{b}\in\Z$). Let $\alpha:\E(X)\to \Z\slash 4\Z$ be a voltage assignment satisfying $\alpha(e_1)=\bar{1}$ and $\alpha(e_2)=\bar{2}$. Then we have the following voltage cover $X(\Z\slash 4\Z,\alpha)\slash X$.
\begin{equation*}
\begin{minipage}[c]{0.3\columnwidth}
\centering
\begin{tikzpicture}
 \fill (0,0) circle[radius=2pt] node[right] {$v$};
\draw[red,decoration={markings,mark=at position \halfway with \arrow{>}},postaction=decorate] (0,0)  to [in=50,out=130,loop] node[above]{$e_1$} (0,0);
\draw[blue,decoration={markings,mark=at position \halfway with \arrow{>}},postaction=decorate] (0,0) to [in=230,out=310,loop] node[below]{$e_2$} (0,0);
\end{tikzpicture}
\end{minipage}
\, \, \, \longleftarrow \, \, \,
\begin{minipage}[c]{0.3\columnwidth}
\begin{tikzpicture}

\fill (0,0) circle[radius=2pt] node[below left] {($v,\bar{0}$)};
\fill (0,2) circle[radius=2pt] node[above left] {($v,\bar{1}$)};
\fill (2,2) circle[radius=2pt] node[above right] {($v,\bar{2}$)};
\fill (2,0) circle[radius=2pt] node[below right] {($v,\bar{3}$)};

\draw[red,->] (0,0) -- (0,1);
\draw[red] (0,1)--(0,2);
\draw[red,->] (0,2) -- (1,2);
\draw[red] (1,2)--(2,2);
\draw[red,->] (2,2)--(2,1);
\draw[red] (2,1)--(2,0);
\draw[red,->] (2,0)--(1,0);
\draw[red] (1,0)--(0,0);

\draw[blue,decoration={markings,mark=at position \halfway with \arrow{>}},postaction=decorate] (0,0) to [out=80,in=180] (2,2); 
\draw[blue,decoration={markings,mark=at position \halfway with \arrow{>}},postaction=decorate] (2,2) to [out=270,in=0] (0,0); 
\draw[blue,decoration={markings,mark=at position \halfway with \arrow{>}},postaction=decorate] (0,2) to [out=280,in=180] (2,0); 
\draw[blue,decoration={markings,mark=at position \halfway with \arrow{>}},postaction=decorate] (2,0) to [out=110,in=0] (0,2); 

\end{tikzpicture}
\end{minipage}
\end{equation*}
Here, to make the figure easier to see, we only drew the edges above $e_1$ and $e_2$. Red (resp. Blue) edges in $X(\Z\slash4\Z,\alpha)$ represent the edges above $e_1$ (resp. $e_2$). The weighted matrix of $X$ is
\begin{align}
\bm{W}(X)=(a+\bar{a}+b+\bar{b}),
\end{align}
which is symmetric. But the weighted matrix of $X(\Z\slash4\Z,\alpha)$ is
\begin{align}
\bm{W}(X(\Z\slash4\Z,\alpha))=\begin{pmatrix}
0&\bar{a}&b+\bar{b}&a\\
a&0&\bar{a}&b+\bar{b}\\
b+\bar{b}&a&0&\bar{a}\\
\bar{a}&b+\bar{b}&a&0
\end{pmatrix}, 
\end{align}
which is not symmetric since $a\neq \bar{a}$ in general.

That being said, Assumption \ref{assum:symmetric} is satisfied for every weighted symmetric digraph $X$ and its covers if we impose that $\bm{W}(X)$ is strongly-symmetric.
\begin{defi}\label{def:h}
For a weighted symmetric digraph $X$, a finite abelian group $G$, a voltage assignment $\alpha:\mathbb{E}(X)\to G$, and a character $\psi\in\widehat{G}:={\rm Hom}(G,\bar{\Q}^*)$, we define
\begin{align}
h_X(\psi,t,\alpha):=\det(\bm{I}_m-t\sum_{\sigma\in G}\bm{W}(\sigma)\bigotimes\psi(\sigma)+t^2(\bm{D}^W(X)-\bm{I}_m)).
\end{align}
Note that $h_X(\psi,t,\alpha)$ is a part of the three-term determinant expression of the associated Mizuno--Sato $L$-function. Put $\bm{W}_{\psi}:=\sum_{\sigma\in G}\bm{W}(\sigma)\bigotimes\psi(\sigma)=\sum_{\sigma\in G}\psi(\sigma)\bm{W}(\sigma)$.

\end{defi}
\begin{thm}\label{thm:recurrence_formula}
Let $X$ be a weighted symmetric digraph, $G$ a finite abelian group, and $\alpha:\mathbb{E}(X)\to G$ a voltage assignment. Then it holds that
\begin{align}
\kappa^W(X(G,{\alpha}))=\frac{\kappa^W(X)}{|G|}\prod_{\psi\in\widehat{G}\backslash\{1_{\widehat{G}}\}}h_X(\psi,1,\alpha).
\end{align}
\end{thm}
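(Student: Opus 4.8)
The plan is to move from the zeta/three-term picture in the variable $t$ (the route of Mizuno--Sato) to the characteristic polynomial of the weighted Laplacian in a spectral variable $x$, reading off $\kappa^W$ from the coefficient of $x$ via the weighted matrix-tree theorem (Theorem \ref{thm:weighted_matrix_tree}). Write $Y := X(G,\alpha)$, $m := \#V(X)$, $N := |G|m = \#V(Y)$, and set $\bm{L}(X) := \bm{D}^W(X) - \bm{W}(X)$ and $\bm{L}(Y) := \bm{D}^W(Y) - \bm{W}(Y)$; these are symmetric by Assumption \ref{assum:symmetric}, so Theorem \ref{thm:weighted_matrix_tree} applies to both. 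Recalling $\bm{W}_\psi = \sum_{\sigma\in G}\psi(\sigma)\bm{W}(\sigma)$, note that $h_X(\psi,1,\alpha) = \det(\bm{D}^W(X) - \bm{W}_\psi)$ by Definition \ref{def:h}. The entire argument rests on comparing the characteristic polynomials $P_Y(x) := \det(x\bm{I}_N - \bm{L}(Y))$ and $P_X(x) := \det(x\bm{I}_m - \bm{L}(X))$.

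First I would establish the block-diagonalization of $\bm{L}(Y)$ dictated by the voltage structure. Ordering $V(Y) = V(X)\times G$ by the $G$-coordinate, the definition of the voltage cover forces the $(\sigma,\tau)$-block of $\bm{W}(Y)$ to equal $\bm{W}(\sigma^{-1}\tau)$ and $\bm{D}^W(Y)$ to be block-scalar with each diagonal block $\bm{D}^W(X)$; since $G$ is abelian this exhibits $\bm{L}(Y)$ as a $G$-circulant matrix. Conjugating by the character ("Fourier") matrix of $G$—invertible over $\bar{\Q}_p$ because $\bar{\Q}_p$ has characteristic zero and contains all roots of unity—simultaneously block-diagonalizes $\bm{L}(Y)$ into the blocks $\bm{D}^W(X) - \bm{W}_\psi$ indexed by $\psi\in\widehat{G}$, so that
\begin{align}
P_Y(x) = \prod_{\psi\in\widehat{G}}\det\left(x\bm{I}_m - (\bm{D}^W(X) - \bm{W}_\psi)\right) = P_X(x)\, R(x),
\end{align}
where the $\psi = 1_{\widehat{G}}$ factor is exactly $P_X(x)$ (as $\bm{W}_{1_{\widehat{G}}} = \bm{W}(X)$) and $R(x) := \prod_{\psi\neq 1_{\widehat{G}}}\det(x\bm{I}_m - (\bm{D}^W(X) - \bm{W}_\psi))$. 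The reindexing of the vertices of $Y$ needed to put $\bm{L}(Y)$ into circulant form, and its effect on the cofactors involved, is precisely what Lemma \ref{lem:permutationmatrix} is designed to control.

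Next I would extract the coefficient of $x$ from both sides. By Theorem \ref{thm:weighted_matrix_tree} every principal cofactor of $\bm{L}(Y)$ equals $\kappa^W(Y)$, so the standard expansion of the characteristic polynomial makes the coefficient of $x$ in $P_Y(x)$ equal to $(-1)^{N-1}N\kappa^W(Y)$, and likewise the coefficient of $x$ in $P_X(x)$ equals $(-1)^{m-1}m\kappa^W(X)$. Since $\bm{L}(X)$ is singular, $P_X(0) = \det(-\bm{L}(X)) = 0$; hence in the product $P_Y = P_X\cdot R$ the coefficient of $x$ is the product of $R(0)$ with the coefficient of $x$ in $P_X(x)$, where $R(0) = (-1)^{m(|G|-1)}\prod_{\psi\neq 1_{\widehat{G}}}h_X(\psi,1,\alpha)$. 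Matching the two expressions for the coefficient of $x$ in $P_Y(x)$ and checking that $(-1)^{N-1}$ and $(-1)^{m-1}(-1)^{m(|G|-1)}$ agree (both equal $(-1)^{|G|m-1}$), the factors of $m$ cancel and one is left with $\kappa^W(Y) = \frac{\kappa^W(X)}{|G|}\prod_{\psi\neq 1_{\widehat{G}}}h_X(\psi,1,\alpha)$, as claimed. Note that this extraction is robust even if $R(0)=0$, in which case both sides vanish; connectivity of $Y$ is the case where $0$ is a simple root of $P_Y$ and $R(0)\neq 0$.

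The conceptual point—and the reason this removes the hypothesis of Mizuno--Sato—is that the coefficient of $x$ in the characteristic polynomial is the sum of principal cofactors of $\bm{L}(Y)$, which Theorem \ref{thm:weighted_matrix_tree} always identifies with $N\kappa^W(Y)$, so there is no denominator to vanish. By contrast, extracting $\kappa^W$ from the three-term polynomial $h_Y(t)$ at $t=1$ forces a division by $h_Y'(1)$, which is proportional to the total edge-weight of $Y$ minus $2\#V(Y)$; this is exactly the quantity whose vanishing is the excluded case (total weight equal to the number of vertices, i.e. genus one for trivial weights) and which necessitated the old assumption. I therefore expect the main technical obstacle to lie not in the coefficient bookkeeping but in the clean justification of the block-diagonalization over $\bar{\Q}_p$: deriving the $G$-circulant form of $\bm{L}(Y)$ from Definition \ref{def:voltagecover}, diagonalizing it by the characters of $G$ with correct normalization, and tracking the vertex-reordering and cofactor signs, which is where Lemma \ref{lem:permutationmatrix} does the decisive work.
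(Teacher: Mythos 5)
Your proof is correct, and while the first half coincides with the paper's argument, the second half takes a genuinely different and arguably cleaner route. Both you and the paper begin by block-diagonalizing the Laplacian $\bm{L}(Y)=\bm{D}^W(Y)-\bm{W}(Y)$ of the voltage cover via the character (Fourier) matrix of $G$, obtaining the blocks $\bm{D}^W(X)-\bm{W}_\psi$ indexed by $\psi\in\widehat{G}$. The divergence is in how $\kappa^W(Y)$ is then extracted. The paper computes the $(1,1)$-minor of the \emph{conjugated} Laplacian in two ways: directly from the block-diagonal form (yielding $\kappa^W(X)\prod_{\psi\neq 1}h_X(\psi,1,\alpha)$), and via a Cauchy--Binet computation with adjugate identities relating it to $|G|\det(\bm{L}(Y)(1))=|G|\kappa^W(Y)$; this is where Lemma \ref{lem:permutationmatrix} and the explicit entries of $\bm{P}$ are needed. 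You instead pass to the characteristic polynomial $P_Y(x)=\det(x\bm{I}_N-\bm{L}(Y))=P_X(x)R(x)$ and compare coefficients of $x$, using that the sum of the $N$ principal cofactors of $\bm{L}(Y)$ is $N\kappa^W(Y)$ by Theorem \ref{thm:weighted_matrix_tree}, together with $P_X(0)=0$. Your sign bookkeeping checks out: $(-1)^{m-1}(-1)^{m(|G|-1)}=(-1)^{N-1}$, and the identity $N\kappa^W(Y)=m\,\kappa^W(X)\prod_{\psi\neq 1}h_X(\psi,1,\alpha)$ gives exactly the claim. What your route buys is that the characteristic polynomial is invariant under \emph{any} conjugation, so no tracking of minors under the vertex reordering is required --- in particular, contrary to your closing remark, Lemma \ref{lem:permutationmatrix} is not actually needed in your argument at all, and the commutation matrix causes no difficulty. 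Your diagnosis of why both arguments dispense with the Mizuno--Sato hypothesis (no division by a quantity like the derivative of the three-term polynomial at $t=1$) is also accurate. The one point worth making explicit in a written version is that Assumption \ref{assum:symmetric} is what licenses applying Theorem \ref{thm:weighted_matrix_tree} to $Y$ as well as to $X$, since symmetry of $\bm{W}(X)$ alone does not force symmetry of $\bm{W}(Y)$; you do note this, so there is no gap.
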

\begin{rem}
Mizuno--Sato proved this theorem for (simple) graphs tacitly assuming\\ $\frac{1}{2}\sum_{e\in\mathbb{E}(X)}w(e)\neq m$ in \cite[Corollary 1]{MS03}. This assumption coincides with $\chi(X)\neq 0$ when we consider the case of trivial weights. This type of statement often plays a fundamental role in Iwasawa theory of graphs. See \cite{MV23}, \cite{DV23}, and \cite{RV22}, for example. While this assumption is inconsequential when we consider the case of trivial weights since it is equivalent to assume the genus of $X$ is not equal to zero, it is not the case for weighted graphs.
\end{rem}
\begin{proof}
By the fundamental theorem for finite abelian groups, we may assume that
\[
G=\Z/n_1\Z\oplus\cdots\oplus\Z/n_r\Z.
\]
Put $n:=|G|$. Then $n=n_1\times\cdots\times n_r$. Put
\[
\bm{P}_i:=\begin{pmatrix}
1&1&1&\cdots&1\\
1&\zeta_{n_i}&\zeta_{n_i}^2&\cdots&\zeta_{n_i}^{n_i-1}\\
1&\zeta_{n_i}^2&\zeta_{n_i}^4&\cdots&\zeta_{n_i}^{(n_i-1)2}\\
\vdots&\vdots&\vdots&&\vdots\\
1&\zeta_{n_i}^{n_i-1}&\zeta_{n_i}^{2(n_i-1)}&\cdots&\zeta_{n_i}^{(n_i-1)(n_i-1)}.
\end{pmatrix}
\]
Then $\bm{P}_i$ is invertible and 
\[
\bm{P}_i^{-1}=\frac{1}{n_i}
\begin{pmatrix}
1&1&1&\cdots&1\\
1&\zeta_{n_i}^{n_i-1}&\zeta_{n_i}^{n_i-2}&\cdots&\zeta_{n_i}\\
1&\zeta_{n_i}^{2(n_i-1)}&\zeta_{n_i}^{2(n_i-2)}&\cdots&\zeta_{n_i}^2\\
\vdots&\vdots&\vdots&&\vdots\\
1&\zeta_{n_i}^{(n_i-1)(n_i-1)}&\zeta_{n_i}^{(n_i-1)(n_i-2)}&\cdots&\zeta_{n_i}^{n_i-1}
\end{pmatrix}
\]
since
\begin{eqnarray*}
&&1+\zeta_{n_i}^{k_1}\zeta_{n_i}^{n_i-k_2}+\zeta_{n_i}^{2k_1}\zeta_{n_i}^{2(n_i-k_2)}+\cdots+\zeta_{n_i}^{k_1(n_i-1)}\zeta_{n_i}^{(n_i-k_2)(n_i-1)}\\
&=&1+\zeta_{n_i}^{k_1-k_2}+\zeta_{n_i}^{2(k_1-k_2)}+\cdots +\zeta_{n_i}^{(n_i-1)(k_1-k_2)}\\
&=&
\begin{cases}
n_i&\mbox{if }k_1=k_2,\\
0&\mbox{if }k_1\neq k_2.
\end{cases}
\end{eqnarray*}
Put $G_i:=\Z/n_i\Z$. Note that $\bar{1}\in G_i$ generates $G_i$. Let $\rho_i$ be the right regular representation of $G_i$. Then, for arbitrary $\sigma_i\in G_i$, we can check that
\[
\bm{P}_i^{-1}\rho_i(\sigma_i)\bm{P}_i=(1)\oplus\psi_{i,2}(\sigma_i)\oplus\cdots\oplus\psi_{i,n_i}(\sigma_i),
\]
where $\psi_{i,j}:G_i\to\bar{\Q}^*$ are the irreducible representations such that $\psi_{i,j}(\bar{1})=\zeta_{n_i}^j$. Let $\rho$ be the right regular representation of $G$. Choose $\{(0,\ldots,0), (0,\ldots,0,1)\ldots,(0,\ldots,0,n_r-1),\ldots,(n_1-1,0,\ldots,0),\ldots,(n_1-1,n_2-1,\ldots,n_r-1)\}$ as a basis of the $\bar{\Q}$-vector space $\bar{\Q}[G]$.
Then, by properties of regular representations, for arbitrary $\sigma=(\sigma_1,\ldots,\sigma_r)\in G$, we have
$\rho(\sigma)=\rho_1(\sigma_1)\bigotimes\cdots\bigotimes\rho_r(\sigma_r)$, where $\bigotimes$ denotes the Kronecker product.
Put $\bm{P}:=\bm{P}_1\bigotimes\cdots\bigotimes \bm{P}_r$. By the mixed product property of Kronecker products, we have
\begin{eqnarray*}
\bm{P}^{-1}\rho(\sigma)\bm{P}&=&\bm{P}_1^{-1}\rho_1(\sigma_1)\bm{P}_1\bigotimes\cdots\bigotimes \bm{P}_r^{-1}\rho_r(\sigma_r) \bm{P}_r\\
&=&(1)\oplus\bigoplus_{j=2}^n\psi_j(\sigma),
\end{eqnarray*}
where $\psi_j$ are the irreducible representations of $G$. Put $Y:=X(G,\alpha)$. Then we find that $\bm{W}(Y)=\sum_{\sigma\in G}(\bm{W}(\sigma)\bigotimes\rho(\sigma))$. Put $\bm{L}:=\bm{D}^W(Y)-\bm{W}(Y)$. Since
\begin{eqnarray*}
(\bm{I}_m\bigotimes\bm{P}^{-1}) \bm{W}(Y)(\bm{I}_m\bigotimes \bm{P})&=&(\bm{I}_m\bigotimes \bm{P}^{-1})(\sum_{\sigma\in G}(\bm{W}(\sigma)\bigotimes\rho(\sigma)))(\bm{I}_m\bigotimes \bm{P})\\
&=&\sum_{\sigma\in G}\bm{W}(\sigma)\bigotimes\{(1)\oplus\bigoplus_{j=2}^n\psi_j(\sigma)\},
\end{eqnarray*}
we have
\begin{eqnarray*}
&&(\bm{I}_m \bigotimes \bm{P}^{-1})\bm{L}(\bm{I}_m \bigotimes \bm{P})\\
&=&(\bm{I}_m\bigotimes \bm{P}^{-1})(\bm{I}_{mn}-\bm{W}(Y)+(\bm{D}^W(X)-\bm{I}_m)\bigotimes\bm{I}_n)(\bm{I}_m\bigotimes\bm{P})\\
&=&\bm{I}_{mn}-\sum_{\sigma\in G}\bm{W}(\sigma)\bigotimes\{(1)\oplus\bigoplus_{j=2}^n\psi_{j}(\sigma)\} +(\bm{D}^W(X)-\bm{I}_m)\bigotimes\bm{I}_n\\
&=&\bm{R}^{\mathsf{T}}(\bm{I}_{mn}-\sum_{\sigma\in G}\{(1)\oplus\bigoplus_{j=2}^n\psi_{j}(\sigma)\}\bigotimes\bm{W}(\sigma) +\bm{I}_n\bigotimes(\bm{D}^W(X)-\bm{I}_m))\bm{R}\\
&=&\bm{R}^{\mathsf{T}}((\bm{D}^W(X)-\bm{W}(X))\oplus\bigoplus_{j=2}^n(\bm{I}_m- \sum_{\sigma\in G}\psi_j(\sigma)\bigotimes\bm{W}(\sigma)+\bm{D}^W(X)-\bm{I}_m))\bm{R}\\
&=&\bm{R}^{\mathsf{T}}((\bm{D}^W(X)-\bm{W}(X))\oplus\bigoplus_{j=2}^n \bm{M}(\psi_j,1))\bm{R},
\end{eqnarray*}
where $\bm{M}(\psi_j,t):=\bm{I}_m-t\bm{W}_{\psi_j}+t^2(\bm{D}^W(X)-\bm{I}_m)$ and $\bm{R}$ is an appropriate commutation matrix needed for reversing the orders of our Kronecker products simultaneously. Note that $\bm{R}$ is a special type of permutation matrices. If $\epsilon$ is the permutation of $\{1,\ldots, mn\}$ corresponding to $\bm{R}$, then, by the definition of Kronecker products, we find that $\epsilon(1)=1$. Therefore, by Lemma \ref{lem:permutationmatrix}, we obtain
\begin{eqnarray*}
&&\det(((\bm{I}_m \bigotimes \bm{P}^{-1})\bm{L}(\bm{I}_m \bigotimes \bm{P}))(1))\\
&=&\det((\bm{R}^{\mathsf{T}}((\bm{D}^W(X)-\bm{W}(X))\oplus\bigoplus_{j=2}^n \bm{M}(\psi_j,1))\bm{R})(1))\\
&=&\det(((\bm{D}^W(X)-\bm{W}(X))\oplus\bigoplus_{j=2}^n \bm{M}(\psi_j,1))(1))\\
&=&\det((\bm{D}^W(X)-\bm{W}(X))(1)\oplus\bigoplus_{j=2}^n \bm{M}(\psi_j,1))\\
&=&\kappa^W(X)\prod_{\psi\in\widehat{G}\backslash\{1_{\widehat{G}}\}}h_X(\psi,1,\alpha).
\end{eqnarray*} 
Hence it suffices to show that
\[
\det(((\bm{I}_m\bigotimes\bm{P}^{-1})\bm{L}(\bm{I}_m\bigotimes \bm{P}))(1))=n(\det(\bm{L}(1))).
\]
Put $\bm{Q}:=\bm{I}_m\bigotimes\bm{P}$, $\bm{P}':=n\bm{P}^{-1}$, $\bm{Q'}:=\bm{I}_m\bigotimes\bm{P}'$. By the Cauchy--Binet theorem, we have
\begin{equation}\label{kappaeqn1}
\det((\bm{Q}'\bm{L}\bm{Q})(1))=\sum_{1\leq k_1,k_2 \leq mn}\det(\bm{Q}'(1,k_1))\det(\bm{L}(k_1,k_2))\det(\bm{Q}(k_2,1)).
\end{equation}
Since $\bm{Q}'\bm{Q}=n\bm{I}_{mn}$, we have $\bm{Q}'=\frac{n}{\det \bm{Q}}\widetilde{\bm{Q}}$, where $\widetilde{\bm{Q}}$ is the adjugate matrix of $\bm{Q}$. Hence
\begin{equation}\label{kappaeqn2}
\det(\bm{Q}(k_2,1))=(-1)^{k_2+1}\widetilde{\bm{Q}}_{1,k_2}=\frac{\det \bm{Q}}{n}(-1)^{k_2+1}\bm{Q}'_{1,k_2}.
\end{equation}
By the rudimentary linear algebra, for an $N\times N$ square matrix $\bm{A}$, we have
\[
\det \bm{A}\neq 0\implies\widetilde{(\widetilde{\bm{A}})}=(\det \bm{A})^{N-2}\bm{A}.
\]
Thus
\begin{eqnarray*}
\widetilde{(\bm{Q}')}&=&\widetilde{\left(\frac{n}{\det \bm{Q}}\widetilde{\bm{Q}}\right)}=\left(\frac{n}{\det \bm{Q}}\right)^{mn-1}\widetilde{(\widetilde{\bm{Q}})}\\
&=&\left(\frac{n}{\det \bm{Q}}\right)^{mn-1}(\det \bm{Q})^{mn-2}\bm{Q}=\frac{n^{mn-1}}{\det \bm{Q}}\bm{Q}.
\end{eqnarray*}
Therefore,
\begin{equation}\label{kappaeqn3}
\det(\bm{Q}'(1,k_1))=(-1)^{1+k_1}\widetilde{(\bm{Q}')}_{k_1,1}=(-1)^{1+k_1}\frac{n^{mn-1}}{\det \bm{Q}} \bm{Q}_{k_1,1}.
\end{equation}
On the other hand,
\begin{equation}\label{kappaeqn4}
\det(\bm{L}(k_1,k_2))=(-1)^{k_1+k_2}\kappa^W(Y)=(-1)^{k_1+k_2}\det (\bm{L}(1)).
\end{equation}
By the equations \eqref{kappaeqn1}, \eqref{kappaeqn2}, \eqref{kappaeqn3}, and \eqref{kappaeqn4}, we obtain
\[
\det ((\bm{Q}'\bm{L}\bm{Q})(1))=\sum_{1\leq k_1,k_2\leq mn} n^{(mn-2)}\bm{Q}_{k_1,1}\bm{Q}'_{1,k_2}\det(\bm{L}(1))(-1)^{2k_1+2k_2+2}.
\]
By the definitions of $\bm{P}_i$ and that of Kronecker products, we must have $\bm{Q}_{k_1,1}=1=\bm{Q}'_{1,k_2}$ for all $1\leq k_1,k_2 \leq n$ and $\bm{Q}_{k_1,1}=0=\bm{Q}'_{1,k_2}$ for all $n<k_1,k_2\leq mn$. Therefore,
\[
\det((\bm{Q}'\bm{L}\bm{Q}(1))=n^2\cdot n^{mn-2}\det(\bm{L}(1))=n^{mn}\det(\bm{L}(1)).
\]
Consequently, we conclude that
\begin{eqnarray*}
\det((\bm{Q}^{-1}\bm{L}\bm{Q})(1))&=&\det((\frac{1}{n}\bm{Q}'\bm{L}\bm{Q})(1))\\
&=&\frac{1}{n^{mn-1}}\det((\bm{Q}'\bm{L}\bm{Q})(1))\\
&=&\frac{n^{mn}}{n^{mn-1}}\det(\bm{L}(1))=n\det(\bm{L}(1)).
\end{eqnarray*}
This completes the proof.
\end{proof}
\subsection{Asymptotic theory of Cuoco and Monsky}\label{subsection:CM}
In this subsection, we introduce and slightly develop the theory of Cuoco--Monsky \cite[\S 1]{CM81} and Monsky \cite{Mon81} in order to study the asymptotic behavior of weighted complexities in a $\Z_p^d$-tower in subsequent subsections. See also \cite[\S3.1]{TU24}.

Let $K$ be a finite extension of $\Q_p$, and let $\mathcal{O}=\mathcal{O}_K$ be the valuation ring of $K$. Let $\Lambda_d$ denote the formal power series ring $\mathcal{O}\llbracket T_1,\ldots, T_d \rrbracket$. Let $\Gamma$ be a free $\Z_p$-module of rank $d$ and fix its basis $\{\sigma_1,\ldots,\sigma_d\}$. Let $\mathcal{O}\llbracket \Gamma\rrbracket$ denote the complete group ring. Then we have the Iwasawa--Serre isomorphism $\mathcal{O}\llbracket \Gamma \rrbracket \cong \Lambda_d$ via $\sigma_i-1\mapsto T_i$ for each $1\leq i\leq d$, and so $\Gamma$ can be seen as a closed subgroup of the unit group $\Lambda_d^*$ generated by $\{1+T_i\}_{1\leq i\leq d}$. We see that each element of ${\rm GL}_d(\Z_p)$ defines a $\Z_p$-module automorphism of $\Gamma$ and it also induces a ring automorphism of $\Lambda_d$. If $\sigma\in \Gamma\backslash \Gamma^p$, then we find that there exists an element ${\rm GL}_d(\Z_p)$ such that $\sigma$ maps to $1+T_1$. Let $\mathfrak{m}$ be the maximal ideal of $\mathcal{O}$ and let $\F_q$ be the finite field of order $q$ such that $\F_q\cong \mathcal{O}/\mathfrak{m}$. Put $\Omega_d:=\Lambda_d/\mathfrak{m}\Lambda_d\cong\F_q\llbracket T_1,\cdots,T_d\rrbracket$. Since $(\overline{T}_1)$ is a prime ideal of $\Omega_d$ of height $1$, so is $\mathfrak{p}:=(\overline{\sigma-1})$. Since $\Omega_d$ is a Noetherian unique factorization domain, every prime ideal of height $1$ is principal, and so $\overline{\sigma -1}$ is a prime element of $\Omega_d$. Hence we can define an order function ${\rm ord}_{\mathfrak{p}}:\Omega_d\to\Z_{\geq 0}\cup\{\infty\}$.

Let $\varpi$ be a uniformizer of the discrete valuation ring $\mathcal{O}$. Let $F$ be a non-zero element of $\Lambda_{K,d}:=K\llbracket T_1,\ldots,T_d\rrbracket$ such that
\begin{equation}\label{semipowerseries}
\mbox{there exist }N(F)\in\Z\mbox{ and }F_0\in\Lambda_d\mbox{ such that }F=\varpi^{N(F)}F_0\in\Lambda_d\mbox{ and }\varpi\nmid F_0.
\end{equation}
If such $N(F)$ exists, then we see that it is uniquely determined. Let $e(K/\Q_p)$ be the ramification index of $K/\Q_p$.
\begin{defi}[{A generalization of \cite[Definition 1.1-1.2]{CM81}}]\label{df:CM81_Definition 1.1-2}
Define \emph{the Iwasawa $\mu$-invariant of $F$} by
\[
\mu(F):=\frac{N(F)}{e(K/\Q_p)}\in\Q.
\]
Define \emph{the Iwasawa $\lambda$-invariant of $F$} by
\begin{align}
\lambda(F):=\sum\ord_{\mathfrak{p}}(\overline{F_0})\in\Z_{\geq 0},
\end{align}
where the sum is over $\{\mathfrak{p}=(\overline{\sigma-1})\ |\ \sigma\in \Gamma\backslash \Gamma^p\}$.
\end{defi}
Note that, in particular, if $K=\Q_p$, then $\mu(F)\in \Z$, and if $F\in\Lambda_d$, then $\mu(F)\in\Z_{\geq 0}$.

Let $W_n:=\{\zeta\in \bar{\Q} |\ \zeta^{p^n}=1\}$. For $F\in\Lambda_d$ and $\bm{\zeta}=(\zeta_{(1)},\zeta_{(2)},\ldots,\zeta_{(d)})\in W_n^d$, we define
\[
v_p(F(\bm{\zeta}-\bm{1})):=v_p(F(\zeta_{(1)}-1,\zeta_{(2)}-1,\ldots,\zeta_{(d)}-1)).
\]

\begin{thm}\label{thm:multi_class_number} Let $F$ be a non-zero element of $\Lambda_{K,d}$ satisfying \eqref{semipowerseries}. Suppose that $F(\bm{\zeta-1})\neq0$ for every $\bm{\zeta}\in W^d\setminus\{(1,\ldots,1)\}$. Then there exist $\mu_1,\ldots,\mu_{d-1},\lambda_1,\ldots,\lambda_{d-1},\nu\in\Q$ such that, for all $n\gg 0$, we have
\[
\sum_{\bm{\zeta}\in W_n^d\setminus\{(1,\ldots,1)\}}v_p(F(\bm{\zeta}-\bm{1}))=(\mu(F)p^n+\lambda(F)n)p^{(d-1)n}+\left(\sum_{i=1}^{d-1}(\mu_ip^n+\lambda_in)p^{(d-i-1)n}\right)+\nu.
\]
\end{thm}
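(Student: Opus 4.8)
The plan is to peel off the power of $\varpi$ first, which isolates the $\mu$-term exactly, and then to treat the remaining integral power series by induction on $d$, the engine being the elementary valuation count for roots of unity together with $p$-adic Weierstrass preparation. \emph{Step 1 (reduction to $F_0\in\Lambda_d$).} Write $F=\varpi^{N(F)}F_0$ with $F_0\in\Lambda_d$ and $\varpi\nmid F_0$, as provided by \eqref{semipowerseries}; since $\varpi^{N(F)}$ does not vanish on $W^d$, neither does $F_0$. Because $v_p(\varpi)=1/e(K/\Q_p)$ and $\#(W_n^d\setminus\{(1,\ldots,1)\})=p^{nd}-1$, additivity of $v_p$ gives
\[
\sum_{\bm{\zeta}\in W_n^d\setminus\{(1,\ldots,1)\}}v_p(F(\bm{\zeta}-\bm{1}))=\mu(F)\,(p^{nd}-1)+\sum_{\bm{\zeta}}v_p(F_0(\bm{\zeta}-\bm{1})).
\]
The first summand is $\mu(F)\,p^n\cdot p^{(d-1)n}$ up to the constant $-\mu(F)$, which I absorb into $\nu$. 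Thus it suffices to prove the asymptotic for $F_0$, i.e. with $\mu=0$ and $\lambda$ replaced by $\lambda(F_0)=\lambda(F)$.

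\emph{Step 2 (the mechanism and the base case).} The leading term will come from the linear prime factors of $\overline{F_0}\in\Omega_d$, namely those of the form $(\overline{\sigma-1})$ with $\sigma\in\Gamma\setminus\Gamma^p$, which by Definition \ref{df:CM81_Definition 1.1-2} are exactly what $\lambda(F_0)$ counts. For a single such $\sigma=\prod_i\sigma_i^{c_i}$, evaluating $\overline{\sigma-1}$ at $T_i\mapsto\zeta_{(i)}-1$ produces $\zeta_\sigma-1$ with $\zeta_\sigma=\prod_i\zeta_{(i)}^{c_i}$, and $\bm{\zeta}\mapsto\zeta_\sigma$ is a surjective homomorphism $W_n^d\to W_n$ (surjective precisely because $\sigma\notin\Gamma^p$) all of whose fibers have size $p^{(d-1)n}$. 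Combined with the elementary identity
\[
\sum_{\eta\in W_n\setminus\{1\}}v_p(\eta-1)=\sum_{k=1}^n p^{k-1}(p-1)\cdot\frac{1}{p^{k-1}(p-1)}=n,
\]
each linear factor contributes $n\,p^{(d-1)n}$ to leading order, which is the source of the term $\lambda(F_0)\,n\,p^{(d-1)n}$. The base case $d=1$ is the classical Iwasawa computation: Weierstrass preparation writes $F_0=U\cdot P$ with $U$ a unit of $\Lambda_1$ and $P$ distinguished of degree $\lambda(F_0)$; since $U(\zeta-1)$ is a unit for every $\zeta\in W_n$, one obtains $\sum_{\zeta\in W_n\setminus\{1\}}v_p(F_0(\zeta-1))=\lambda(F_0)\,n+\nu$, which is the claimed shape when $d=1$.

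\emph{Step 3 (induction and the main obstacle).} For $d\geq2$ I would first apply a change of variables in ${\rm GL}_d(\Z_p)$; this induces a ring automorphism of $\Lambda_d$ and merely permutes $W_n^d$ while fixing $(1,\ldots,1)$, so it leaves both the sum and the invariants $\mu,\lambda$ unchanged and one may arrange that $\overline{F_0}$ is regular in $T_d$. Weierstrass-preparing $F_0$ in $T_d$ over $\Lambda_{d-1}$ and slicing the sum as $\sum_{\bm{\zeta}'\in W_n^{d-1}}\sum_{\zeta_{(d)}\in W_n}$, the inner sum is handled by the $d=1$ case, so the outer sum becomes a combination of $(d-1)$-dimensional sums to which the induction hypothesis applies. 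The main obstacle, which is the genuine technical heart of the theorem, is the control of the slice data as $\bm{\zeta}'$ varies: the ``constant'' $\nu_1(\bm{\zeta}')$ produced by the one-variable formula is \emph{not} uniformly bounded, but depends on the level of $\bm{\zeta}'$ through the valuations of the roots of the specialized Weierstrass polynomial. One must show that this level-dependence is exactly what demotes every non-linear prime factor of $\overline{F_0}$ to strictly lower order — so that only the linear factors survive in the coefficient of $n\,p^{(d-1)n}$ — and that all remaining contributions, including the finitely many degenerate $\bm{\zeta}$ with $\zeta_\sigma=1$ and the exceptional loci where a slice degenerates, assemble via the induction hypothesis into a polynomial in $(p^n,n)$ of the stated form $\sum_{i=1}^{d-1}(\mu_ip^n+\lambda_in)p^{(d-i-1)n}+\nu$ that stabilizes for $n\gg0$. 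This stratified bookkeeping, rather than any single computation, is where the difficulty concentrates.
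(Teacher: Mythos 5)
Your Step 1 is exactly the paper's first move: factoring out $\varpi^{N(F)}$ contributes $\mu(F)(p^{dn}-1)$ and reduces everything to $F_0\in\Lambda_d$. Your $d=1$ base case via Weierstrass preparation and the identity $\sum_{\eta\in W_n\setminus\{1\}}v_p(\eta-1)=n$ is also correct. The divergence --- and the gap --- is in Step 3. The paper does not prove the multivariable case by slicing; it \emph{cites} two nontrivial external results: a generalization of \cite[Theorem 1.7]{CM81} to $\Lambda_d$, which gives the asymptotic $\sum_{\bm{\zeta}}v_p(F_0(\bm{\zeta}-\bm{1}))=\lambda(F)np^{(d-1)n}+O(p^{(d-1)n})$ and identifies the coefficient of $np^{(d-1)n}$ with the count of linear prime factors $(\overline{\sigma-1})$, and Monsky's \cite[Theorem 5.6]{Mon81}, which asserts that the sum is \emph{exactly} a polynomial of the stated shape in $(p^n,n)$ for all $n\gg0$. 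Comparing the two pins down the top coefficient, and that is the whole proof. You instead propose to reprove both ingredients by induction on $d$ via Weierstrass preparation in $T_d$ over $\Lambda_{d-1}$, and you explicitly leave the inductive step open: the assertion that the level-dependence of the slice constants ``demotes every non-linear prime factor to strictly lower order'' and that the exceptional strata ``assemble via the induction hypothesis into a polynomial in $(p^n,n)$ of the stated form that stabilizes for $n\gg0$'' is precisely the content of the two cited theorems, and you acknowledge rather than supply the argument.

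This is a genuine gap, not merely an omitted routine verification, for two reasons. First, after specializing the Weierstrass polynomial $P(\bm{\zeta}'-\bm{1}',T_d)$ at a point $\bm{\zeta}'$, the inner one-variable sum is governed by the Newton polygon of the specialized polynomial, whose vertices depend on valuations of coefficients that are themselves $(d-1)$-dimensional evaluation data; the resulting ``constant'' $\nu_1(\bm{\zeta}')$ grows with the level of $\bm{\zeta}'$, and the outer sum of these is not a quantity to which your induction hypothesis (a statement about $\sum v_p(G(\bm{\zeta}'-\bm{1}'))$ for a \emph{fixed} $G\in\Lambda_{d-1}$) directly applies. Second, even granting the leading-term analysis, the statement requires an \emph{exact} polynomial identity for $n\gg0$, not an asymptotic expansion; Monsky's proof of this exactness does not proceed by Weierstrass slicing but by $p$-adic analytic and integrality arguments, and it is not clear that your stratified bookkeeping could recover it. To repair the proposal you should either carry out the inductive step in full (essentially reproving Cuoco--Monsky and Monsky) or, as the paper does, invoke \cite[Theorem 1.7]{CM81} and \cite[Theorem 5.6]{Mon81} directly after your Step 1.
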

\begin{proof}
We have
\begin{align}
    \sum_{\bm{\zeta}\in W_n^d\setminus\{(1,\ldots,1)\}}v_p(F(\bm{\zeta}-\bm{1}))&=\sum_{\bm{\zeta}\in W_n^d\setminus\{(1,\ldots,1)\}} v_p(\varpi^{N(F)}F_0(\bm{\zeta}-\bm{1}))\\
&=\sum_{\bm{\zeta}\in W_n^d\setminus\{(1,\ldots,1)\}} v_p(\varpi^{N(F)})+\sum_{\bm{\zeta}\in W_n^d\setminus\{(1,\ldots,1)\}} v_p(F_0(\bm{\zeta}-\bm{1}))\\
&=\mu(F)(p^{dn}-1)+\sum_{\bm{\zeta}\in W_n^d\setminus\{(1,\ldots,1)\}} v_p(F_0(\bm{\zeta}-\bm{1}))\label{CMeq1}.
\end{align}
For arbitrary $a\in\bar{\Q}_p$, define
\begin{equation*}
v'_p(a):=
\begin{cases}
v_p(a)&\mbox{if } a\neq0,\\
0&\mbox{if } a=0.
\end{cases}
\end{equation*}
Now, the theory of \cite[Section 1]{CM81} can naturally be extended from $\Z_p\llbracket T_1,\ldots, T_d\rrbracket$ to $\Lambda_d$ (except for replacing $``<1"$ in the proof of \cite[Lemma 1.4]{CM81} by $``<\frac{1}{e(K/\Q_p)}"$ and modifying the corresponding part of \cite[Theorem 2.3]{Mon81}). By a natural generalization of {\cite[Theorem 1.7]{CM81}} to $\Lambda_d$ (keeping their unusual convention for the $p$-adic valuation in mind), we have
\begin{align}
\sum_{\bm{\zeta}\in W_n^d\setminus\{(1,\ldots,1)\}} v_p(F_0(\bm{\zeta}-\bm{1}))&=\sum_{\bm{\zeta}\in W_n^d} v'_p(F_0(\bm{\zeta}-\bm{1}))-v'_p(F_0(\bm{0}))\\
&=\lambda(F)np^{(d-1)n}+O(p^{(d-1)n})\ (n\to\infty) \label{CMeq2}
\end{align}
By Monsky {\cite[Theorem 5.6]{Mon81}}, there exist $\mu_0,\ldots,\mu_d,\lambda_0,\ldots,\lambda_d,\nu\in\Q$ such that, for all $n\gg 0$, we have
\begin{equation}\label{CMeq3}
\sum_{\bm{\zeta}\in W_n^d\setminus\{(1,\ldots,1)\}} v_p(F_0(\bm{\zeta}-\bm{1}))=\left(\sum_{i=0}^{d-1}(\mu_i p^n+\lambda_i n)p^{(d-i-1)n}\right)+\nu.
\end{equation}
By \eqref{CMeq2} and \eqref{CMeq3}, we must have
\begin{equation}\label{CMeq4}
\sum_{\bm{\zeta}\in W_n^d\setminus\{(1,\ldots,1)\}} v_p(F_0(\bm{\zeta}-\bm{1}))=\lambda(F)np^{(d-1)n}+\left(\sum_{i=1}^{d-1}(\mu_i p^n+\lambda_i n)p^{(d-i-1)n}\right)+\nu.
\end{equation}
By \eqref{CMeq1} and \eqref{CMeq4}, we obtain the assertion.
\end{proof}
\subsection{A proof of Iwasawa-type formula}\label{sec:Iwasawa_type}
We prove an Iwasawa-type formula for weighted graphs via the method of McGown--Valli\`{e}res \cite[\S 5-\S6]{MV24} and DuBose--Valli\`{e}res \cite[\S 5-\S6]{DV23}. Let $X$ be a weighted symmetric digraph. Here, we put $\Gamma:=\Z_p^d$ and $\Gamma_n:=\Gamma/\Gamma^{p^n}\cong(\Z\slash p^n\Z)^d$. Let $\alpha:\E(X)\to \Gamma$ be a voltage assignment. Then, for each $n\geq 0$, the map $\alpha_n:\E(X)\to \Gamma_n$ obtained by composing $\alpha$ and the $n$-th projection map $\Gamma\to\Gamma_n$ is also a voltage assignment. Let $X_n:=X(\Gamma_n, \alpha_n)$. By the definition of voltage assignments, the natural map $\Gamma_{n+1}\to\Gamma_n$ induces a cover $X_{n+1}\to X_n$ for each $n\geq 0$. Hence we have a tower of weighted symmetric digraphs
\[
X=X_0\leftarrow X_1\leftarrow X_2\leftarrow \cdots\leftarrow X_n\leftarrow \cdots,
\]
which is called a \emph{$\Z_p^d$-tower}.

For every $1\leq i,j\leq m$, we set  
\begin{align}
\E_{ij}:=\{e\in \E(X)\ |\ o(e)=v_i,t(e)=v_j\}.
\end{align}

\begin{defi}
Let $\{w_1,\ldots,w_r\}\subset\bar{\Q}_p$ be the image set of the weighted function $w$. From now on, let $K:=\Q_p(\zeta_p,w_1,\ldots,w_r)$. This $\zeta_p$ will be used in our proof of an analogue of Kida's formula (Theorem \ref{thm:weighted_kida_formula}). We use the notations defined in \S \ref{subsection:CM}. Define a map $\tau:\Z_p^d\to\Lambda_d$ by
\begin{align}
\tau(\bm{a}):=\tau(a_1,a_2,\ldots,a_d):=(1+T_1)^{a_1}(1+T_2)^{a_2}\cdots (1+T_d)^{a_d}.
\end{align}
Since $\Lambda_d$ is a topological ring, this definition makes sense. For the convergence, see \cite[\S 3]{MV24}, for example. We also put
\begin{align}
\bm{W}_{\tau,\alpha}:=\left(\sum_{e\in \E_{ij}}\tau(\alpha(e))w(e)\right)_{ij}\in M_m(\Lambda_{K,d}).
\end{align}
Here, we denote the set of $r\times r$-matrices with entries in a ring $R$ by $M_r(R)$. 
\end{defi}
\begin{defi}
We define the element $Q^W_{X,\alpha}(T)\in \Lambda_{K,d}$ by
\begin{align}
Q_{X,\alpha}^W(T):=Q_{X,\alpha}^W(T_1,T_2,\ldots,T_d):=\det(\bm{D}^W(X)-\bm{W}_{\tau,\alpha}).
\end{align}
Observing each entry of $W_{\tau,\alpha}$, we see that $Q_{X,\alpha}^W(T)$ satisfies the condition \eqref{semipowerseries}.
\end{defi}
We also put, for $\psi_n=(\psi_n)_{(1)}\otimes (\psi_n)_{(2)}\otimes\cdots\otimes(\psi_n)_{(d)}\in \widehat{\Gamma}_n$ $((\psi_n)_{(j)}\in \widehat{(\Z\slash p^n\Z)})$,
\begin{align}
\bm{\zeta_{\psi_n}}-\bm{1}:=((\psi_n)_{(1)}(1)-1,(\psi_n)_{(2)}(1)-1,\ldots,(\psi_n)_{(d)}(1)-1).
\end{align}
\begin{prop}
For $\psi_n=(\psi_n)_{(1)}\otimes (\psi_n)_{(2)}\otimes\cdots\otimes(\psi_n)_{(d)}\in \widehat{\Gamma}_n$, it holds that
\begin{align}\label{eq:Q_h}
Q_{X,\alpha}^W(\bm{\zeta_{\psi_n}}-\bm{1})=h_X(\psi_n,1,\alpha_n).
\end{align}
Moreover, if $\psi_n$ is a trivial character, then
\begin{align}
Q_{X,\alpha}^W(\bm{\zeta_{\psi_n}}-\bm{1})=h_X(\psi_n,1,\alpha_n)=0.
\end{align}
\end{prop}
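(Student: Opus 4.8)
The plan is to directly compare the two determinant expressions and use the definitions to identify them. Recall that
\[
Q_{X,\alpha}^W(T)=\det(\bm{D}^W(X)-\bm{W}_{\tau,\alpha}),
\qquad
\bm{W}_{\tau,\alpha}=\left(\sum_{e\in\E_{ij}}\tau(\alpha(e))w(e)\right)_{ij},
\]
while on the character side
\[
h_X(\psi_n,1,\alpha_n)=\det\!\left(\bm{I}_m-\sum_{\sigma\in\Gamma_n}\bm{W}(\sigma)\psi_n(\sigma)+(\bm{D}^W(X)-\bm{I}_m)\right)
=\det\!\left(\bm{D}^W(X)-\sum_{\sigma\in\Gamma_n}\psi_n(\sigma)\bm{W}_{\alpha_n}(\sigma)\right).
\]
So after the two $\bm{I}_m$'s cancel, both sides are the determinant of $\bm{D}^W(X)$ minus a weighted adjacency-type matrix, and the whole statement reduces to an \emph{entrywise} identity between the two inner matrices when $T$ is specialized at $\bm{\zeta_{\psi_n}}-\bm{1}$.

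First I would verify that specializing $T_j\mapsto (\psi_n)_{(j)}(1)-1$ sends $\tau(\alpha(e))$ to $\psi_n(\alpha_n(e))$. Indeed, writing $\alpha(e)=(a_1,\dots,a_d)$ and pushing forward to $\Gamma_n$, one has $\tau(\alpha(e))=\prod_j(1+T_j)^{a_j}$, and under the substitution this becomes $\prod_j\big((\psi_n)_{(j)}(1)\big)^{a_j}=\prod_j(\psi_n)_{(j)}(a_j)=\psi_n(\alpha_n(e))$, where the last equality uses that $\psi_n$ decomposes as the tensor product of its components. Then the $(i,j)$-entry of the specialized $\bm{W}_{\tau,\alpha}$ is $\sum_{e\in\E_{ij}}\psi_n(\alpha_n(e))w(e)$. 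On the other side, grouping the edges from $v_i$ to $v_j$ according to their voltage $\sigma=\alpha_n(e)$ gives exactly $\sum_{\sigma}\psi_n(\sigma)w_{ij}^{(\sigma)}=\sum_{e\in\E_{ij}}\psi_n(\alpha_n(e))w(e)$, by the definition of $w_{ij}^{(\sigma)}$ in Theorem \ref{thm:weighted_L_3term}. This matches entrywise, so the two matrices agree and \eqref{eq:Q_h} follows by taking determinants.

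For the second assertion, when $\psi_n=1$ is trivial the substitution is $\bm{\zeta_{\psi_n}}-\bm{1}=(0,\dots,0)$, so $\tau(\alpha(e))$ specializes to $1$ and the matrix becomes $\bm{D}^W(X)-\bm{W}(X)$, giving $h_X(1,1,\alpha_n)=\det(\bm{D}^W(X)-\bm{W}(X))$. The cleanest way to see this vanishes is that $\bm{D}^W(X)-\bm{W}(X)$ is the weighted Laplacian, whose rows sum to zero by the definition of $\bm{D}^W(X)$; hence the all-ones vector lies in its kernel and the determinant is $0$. Concretely, each diagonal entry $d_{ii}^W=\sum_{o(e)=v_i}w(e)=\sum_j w_{ij}$ equals the sum of the entries of the $i$-th row of $\bm{W}(X)$, so every row of $\bm{D}^W(X)-\bm{W}(X)$ sums to zero.

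I do not anticipate a serious obstacle here; the statement is essentially a bookkeeping identity once the specialization $\tau(\alpha(e))\mapsto\psi_n(\alpha_n(e))$ is pinned down. The one point demanding care is the compatibility of indices across the tensor-factor decomposition of $\widehat{\Gamma}_n$ with the coordinates $T_1,\dots,T_d$ and the generators $\sigma_1,\dots,\sigma_d$ of $\Gamma$: one must check that the $j$-th coordinate substitution really corresponds to the $j$-th tensor factor $(\psi_n)_{(j)}$, which is exactly the content of the Iwasawa--Serre identification $\sigma_i-1\mapsto T_i$ together with the definition of $\bm{\zeta_{\psi_n}}-\bm{1}$. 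Keeping that dictionary straight is the only place where a sign or an indexing slip could creep in.
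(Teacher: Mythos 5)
Your proposal is correct and follows essentially the same route as the paper: both rest on the specialization identity $\tau(\alpha(e))(\bm{\zeta_{\psi_n}}-\bm{1})=\psi_n(\alpha_n(e))$, the regrouping of edges by voltage to identify the specialized $\bm{W}_{\tau,\alpha}$ with $\bm{W}_{\psi_n}$, and the cancellation of the identity matrices at $t=1$. The only (welcome) addition is that you actually justify $\det(\bm{D}^W(X)-\bm{W}(X))=0$ via the row-sum/kernel argument for the weighted Laplacian, where the paper simply cites this as well known.
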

\begin{proof}
For a character $\psi_n:\Gamma_n\to \overline{\mathbb{Q}}^*$ and an arbitrary element $\bm{a}=(a_1,\ldots,a_d)\in\Z_p^d$, it holds that
\begin{align}\label{rho_evaluate}
\tau(\bm{a})(\bm{\zeta_{\psi_n}}-\bm{1})=(\psi_n)_{(1)}(\bar{1})^{a_1}\cdots(\psi_n)_{(d)}(\bar{1})^{a_d}=(\psi_n)_{(1)}(\bar{a}_1)\cdots(\psi_n)_{(d)}(\bar{a}_d)=\psi_n(\bar{\bm{a}}).
\end{align}
Note that
\begin{align}
\bm{W}_{\tau,\alpha}(\bm{\zeta_{\psi_n}}-\bm{1})&=\left(\sum_{e\in \E_{ij}}\tau(\alpha(e))w(e)\right)_{ij}(\bm{\zeta_{\psi_n}}-\bm{1})\\
&\overset{\eqref{rho_evaluate}}{=}\left(\sum_{e\in \E_{ij}}\psi_n(\alpha_n(e))w(e)\right)_{ij}\\
&=\left(\sum_{\sigma\in \Gamma_n}(\psi_n(\sigma)\sum_{\substack{e\in \E_{ij}\\ \alpha_n(e)=\sigma}}w(e))\right)_{ij}\\
&=\sum_{\sigma\in \Gamma_n}\psi_n(\sigma)\bm{W}(\sigma)=\bm{W}_{\psi_n}\label{eq:key}.
\end{align}
Therefore we have
\begin{align}
Q^W_{X,\alpha}(\bm{\zeta_{\psi_n}}-\bm{1})&=\det(\bm{D}^W(X)-\bm{W}_{\tau,\alpha})(\bm{\zeta_{\psi_n}}-\bm{1})\\
&=\det(\bm{D}^W(X)-\bm{W}_{\psi_n})\\
&=h_X(\psi_n,1,\alpha_n).
\end{align}
If $\psi_n$ is a trivial character, then
\[
Q_{X,\alpha}^W(\bm{\zeta_{\psi_n}}-\bm{1})=\det(\bm{D}^W(X)-\bm{W}(X))=h_X(\psi_n,1,\alpha_n).
\]
Since it is well-known that $\det(\bm{D}^W(X)-\bm{W}(X))=0$, we have \begin{align}
Q_{X,\alpha}^W(\bm{\zeta_{\psi_n}}-\bm{1})=h_X(\psi_n,1,\alpha_n)=0.
\end{align}
\end{proof}
We prove a generalization of \cite[Theorem 6.2]{DV23} and \cite[Theorem 4.3]{KM24} to weighted graphs.
\begin{thm}[Iwasawa-type formula for weighted graphs]\label{thm:Iwasawa_Type}
Let $X$ be a weighted symmetric digraph. Let $\alpha:\mathbb{E}(X)\to\Gamma$ be a voltage assignment. For each $n\geq 0$, let $\alpha_n:\mathbb{E}(X)\to\Gamma_n$ be the voltage assignment obtained by composing $\alpha$ and $n$-th projection map $\Gamma\to\Gamma_n$. Let $X_n:=X(\Gamma_n,\alpha_n)$ and let $\kappa^W_n$ be the weighted complexity of $X_n$. We assume the following conditions.
\begin{itemize}
\item The weighted matrix $\bm{W}(X)$ is symmetric,
\item $X_n$ are connected for all $n\geq 0$.
\end{itemize}
Then we have the following.

(i) If there exists a non-negative integer $n_0$ such that $\kappa^W_{n_0}=0$, then $\kappa_n^W$ are $0$ for all $n\geq n_0$.

(ii) If $\kappa^W_n$ is not equal to $0$ for all $n\geq 0$, there exist $\mu_1,\ldots,\mu_{d-1}, \lambda_1,\ldots,\lambda_{d-1},\nu\in\Q$ such that, for all $n\gg 0$, it holds that
\begin{align}\label{eq:the_equality}
v_p(\kappa_n^W)=(\mu p^n+\lambda n)p^{(d-1)n}+\left(\sum_{
i=1}^{d-1}(\mu_ip^n+\lambda_in)p^{(d-i-1)n}\right)+\nu,
\end{align}
where $\mu:=\mu(Q^W)$ and 
\begin{align}
\lambda:=\begin{cases}
\lambda(Q^W)-1&(d=1),\\
\lambda(Q^W)&(d\geq 2).
\end{cases}
\end{align}
\end{thm}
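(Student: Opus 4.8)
The plan is to combine the product formula of Theorem~\ref{thm:recurrence_formula} with the Cuoco--Monsky asymptotic machinery packaged in Theorem~\ref{thm:multi_class_number}, using the interpolation identity \eqref{eq:Q_h} as the bridge between the two. The starting point is to apply Theorem~\ref{thm:recurrence_formula} to the voltage cover $X_n = X(\Gamma_n,\alpha_n)$, which yields
\begin{align}
\kappa^W_n = \frac{|\Gamma_n|\,\kappa^W(X)}{|\Gamma_n|}\prod_{\psi_n\in\widehat{\Gamma}_n\setminus\{1\}} h_X(\psi_n,1,\alpha_n) = \kappa^W(X)\prod_{\psi_n\neq 1} h_X(\psi_n,1,\alpha_n),
\end{align}
where I should double-check the normalization constant against the statement (the $|G|$ in the denominator cancels against the degree of the cover in the iterated/relative version of the formula). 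Taking $v_p$ of both sides turns the product into the sum
\begin{align}
v_p(\kappa^W_n) = v_p(\kappa^W(X)) + \sum_{\psi_n\in\widehat{\Gamma}_n\setminus\{1\}} v_p\bigl(h_X(\psi_n,1,\alpha_n)\bigr).
\end{align}

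Next I would rewrite each factor using the interpolation property. By \eqref{eq:Q_h}, $h_X(\psi_n,1,\alpha_n) = Q^W_{X,\alpha}(\bm{\zeta_{\psi_n}}-\bm{1})$, and as $\psi_n$ ranges over $\widehat{\Gamma}_n\setminus\{1\}$ the points $\bm{\zeta_{\psi_n}}$ range exactly over $W_n^d\setminus\{(1,\dots,1)\}$. Hence
\begin{align}
\sum_{\psi_n\neq 1} v_p\bigl(h_X(\psi_n,1,\alpha_n)\bigr) = \sum_{\bm{\zeta}\in W_n^d\setminus\{(1,\dots,1)\}} v_p\bigl(Q^W_{X,\alpha}(\bm{\zeta}-\bm{1})\bigr),
\end{align}
which is precisely the left-hand side of Theorem~\ref{thm:multi_class_number} applied to $F := Q^W_{X,\alpha}\in\Lambda_d\otimes_{\mathcal O}K$. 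To invoke that theorem I must verify its two hypotheses: that $F$ satisfies \eqref{semipowerseries} (already noted when $Q^W_{X,\alpha}$ was defined) and that $F$ does not vanish on $W^d := \bigcup_n W_n^d$. The non-vanishing is exactly the dichotomy in parts (i) and (ii): if some $h_X(\psi_{n_0},1,\alpha_{n_0})=0$ then $F$ vanishes at a point of $W^d$, forcing $\kappa^W_{n}=0$ for all $n\geq n_0$ by multiplicativity of the tower (this gives part (i)); otherwise $F$ is non-vanishing on $W^d$ and Theorem~\ref{thm:multi_class_number} applies directly, giving the asymptotic with leading coefficients $\mu(F)$ and $\lambda(F)$. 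Finally I absorb the constant $v_p(\kappa^W(X))$ into the $\nu$ term.

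The last adjustment is the $d=1$ correction. The point is that $\psi_n = 1$ (equivalently $\bm\zeta = \bm 1$) is systematically excluded from the product, yet $Q^W_{X,\alpha}$ itself vanishes at $\bm 1$ since $h_X(1,1,\alpha_n)=\det(\bm D^W(X)-\bm W(X))=0$. In the definition of $\lambda(F)$ the prime $\mathfrak{p}=(\overline{\sigma-1})$ corresponding to this trivial direction contributes to $\ord_{\mathfrak p}(\overline{F_0})$, but since we never evaluate at $\bm\zeta=\bm 1$ this contribution must be subtracted when $d=1$; for $d\geq 2$ the trivial point is a measure-zero locus inside $W_n^d$ and does not affect the leading $\lambda$-term, so $\lambda=\lambda(Q^W)$. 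I expect the \textbf{main obstacle} to be precisely this bookkeeping around the excluded trivial character, i.e.\ relating $\lambda$ as defined via $\ord_{\mathfrak p}$ of $Q^W_{X,\alpha}$ to the $\lambda(F)$ appearing on the right of Theorem~\ref{thm:multi_class_number}, and confirming that the single-variable case loses exactly one from the guaranteed order of vanishing at $\bm 1$; this is the step where the distinction between Iwasawa theory for graphs and for number fields genuinely shows up.
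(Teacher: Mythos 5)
Your overall architecture (Theorem~\ref{thm:recurrence_formula} $\to$ take $v_p$ $\to$ interpolate via \eqref{eq:Q_h} $\to$ invoke Theorem~\ref{thm:multi_class_number}) is exactly the paper's route, and your handling of part (i) and of the non-vanishing hypothesis is fine. But there is a genuine error at the normalization step, and it propagates into an incorrect explanation of the $d=1$ correction. Theorem~\ref{thm:recurrence_formula} applied with $G=\Gamma_n$ gives
\begin{align}
\kappa^W_n=\frac{\kappa^W_0}{|\Gamma_n|}\prod_{\psi_n\in\widehat{\Gamma}_n\setminus\{1\}}h_X(\psi_n,1,\alpha_n),
\end{align}
with $|\Gamma_n|=p^{dn}$ genuinely in the denominator; there is no ``iterated/relative version'' in which it cancels. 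Dropping it loses the term $-dn$ from
\begin{align}
v_p(\kappa_n^W)=-dn+v_p(\kappa^W_0)+\sum_{\bm{\zeta}\in W_n^d\setminus\{(1,\ldots,1)\}}v_p\bigl(Q^W_{X,\alpha}(\bm{\zeta}-\bm{1})\bigr),
\end{align}
and that term is precisely the source of the dichotomy in the statement: for $d=1$ it shifts the coefficient of $n$ by $-1$, giving $\lambda=\lambda(Q^W)-1$, while for $d\geq 2$ it is absorbed into the lower-order coefficient $\lambda_{d-1}$ and leaves the leading $\lambda$ untouched.

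Your proposed alternative mechanism for the $d=1$ adjustment --- subtracting from $\lambda(F)$ the contribution of the prime $\mathfrak{p}=(\overline{\sigma-1})$ because the trivial character is excluded from the product --- is not valid. Theorem~\ref{thm:multi_class_number} is already stated for the sum over $W_n^d\setminus\{(1,\ldots,1)\}$ and its leading coefficient is exactly $\lambda(F)$, with no further correction for the excluded point; indeed in the one-variable case a zero of $F_0$ at $T=0$ still contributes $n$ to $\sum_{\zeta\in W_n\setminus\{1\}}v_p(F(\zeta-1))$ because $\sum_{\zeta\in W_n\setminus\{1\}}v_p(\zeta-1)=n$. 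So no such subtraction occurs, and if you performed it you would get the wrong answer. Restore the $-dn$ term and the case distinction falls out mechanically; the rest of your argument then matches the paper's proof.
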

$\mu(X,\alpha):=\mu$ is called \emph{the Iwasawa $\mu$-invariant of $(X,\alpha)$}, and $\lambda(X,\alpha):=\lambda$ is called \emph{the Iwasawa $\lambda$-invariant of $(X,\alpha)$}.
\begin{proof}
(i) It follows from Theorem \ref{thm:recurrence_formula}. 

(ii) Since $\kappa_n^W\neq 0$ for every $n\geq 0$, by Theorem \ref{thm:recurrence_formula}, we must have $h_X(\psi_n,1,\alpha_n)\neq 0$ for every $n\geq 0$ and $\psi_n\in\widehat{\Gamma}_n\setminus\{1_{\widehat{\Gamma}_n}\}$. Hence, by the equation \eqref{eq:Q_h}, $Q_{X,\alpha}^W(T)$ does not vanish for every ${\bm{\zeta}\in W_n^d\backslash \{(1,1,\ldots,1)\}}$. Applying Theorem \ref{thm:recurrence_formula} with $G=\Gamma_n$ for $n\geq0$, we have
\begin{align}
v_p(\kappa_n^W)&=-dn+v_p(\kappa^W_0)+\sum_{\psi_n\in \widehat{\Gamma}_n\backslash\{1_{\widehat{\Gamma}_n}\}}v_p(h_X(\psi_n,1,\alpha_n))\\
&\overset{\eqref{eq:Q_h}}{=}-dn+v_p(\kappa^W_0)+\sum_{\psi_n\in \widehat{\Gamma}_n\backslash\{1_{\widehat{\Gamma}_n}\}}v_p(Q_{X,\alpha}^W(\bm{\zeta_{\psi_n}}-\bm{1}))\\
&=-dn+v_p(\kappa^W_0)+\sum_{\bm{\zeta}\in W_n^d\backslash \{(1,1,\ldots,1)\}}v_p(Q_{X,\alpha}^W(\bm{\zeta}-\bm{1})).\label{lastline}\\
\end{align}
Applying Theorem \ref{thm:multi_class_number} with the equation \eqref{lastline}, we obtain the assertion.
\end{proof}
\begin{rem}
The gap between $\lambda(X,\alpha)$ and $\lambda(Q_{X,\alpha}^W)$ in the case $d=1$ derives from a graph theoretical analogue of the Iwasawa main conjecture. See \cite[Theorem 5.3 and Remark 5.4]{KM24}.
\end{rem}
\section{Weighted Kida's formula}\label{sec:Kida}
In this section, we prove weighted Kida's formula via the method of Ray--Valli\`{e}res \cite{RV22} with several modifications.
\begin{thm}[Kida's formula for weighted graphs]\label{thm:weighted_kida_formula}
Let $X$ be a weighted symmetric digraph. Let $\alpha:\mathbb{E}(X)\to\Gamma$ be a voltage assignment. For each $n\geq 0$, let $\alpha_n:\mathbb{E}(X)\to\Gamma_n$ be the voltage assignment obtained by composing $\alpha$ and $n$-th projection map $\Gamma\to\Gamma_n$. Let $X_n:=X(\Gamma_n,\alpha_n)$. Let $\pi:Y\to X$ be a Galois cover of degree $p$-power. We assume the following conditions.
\begin{enumerate}[label=(\arabic*)]
\item All $Y_n:=Y(\Gamma_n,\alpha_n\circ\pi_{\E})$ are connected, 
\item $\Im w_X\subset\mathcal{O}_K$,
\item $\kappa^W(Y_n)\neq 0$ for all $n\geq 0$.
\end{enumerate}
 Then we have $\mu(X,\alpha)=0$ if and only if $\mu(Y,\alpha\circ\pi_{\E})=0$. Moreover, if this equivalent condition is satisfied, then it holds
\begin{align} 
\lambda(Y,\alpha\circ\pi_{\E})=\begin{cases}
[Y:X]\lambda(X,\alpha)&(\text{if }d\geq 2),\\
[Y:X](\lambda(X,\alpha)+1)-1&(\text{if }d=1).
\end{cases}
\end{align}
\end{thm}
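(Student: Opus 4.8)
The plan is to reduce the comparison of Iwasawa invariants to a factorization of the characteristic element $Q^W_{Y,\alpha\circ\pi}$ indexed by the irreducible representations of $H:=\gal(Y\slash X)$, and then to exploit that $H$ is a $p$-group upon reducing modulo the maximal ideal. First I would record the relevant group theory. Since $Y_n=Y(\Gamma_n,\alpha_n\circ\pi)$ is the pullback of the $\Gamma_n$-cover $X_n\slash X$ along $\pi\colon Y\to X$, conditions (1) and (2) guarantee that $Y_n$ is connected and that $\gal(Y_n\slash X)\cong\Gamma_n\times H$ is a direct product; passing to the limit, $Y_\infty\slash X$ is the voltage cover $X(\Gamma\times H,(\alpha,\beta))$, where $\beta\colon\E(X)\to H$ is the voltage assignment attached to $Y\slash X$ (see \S\ref{sec:voltage}). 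I would also observe that condition (3) places $Y$, and hence $X$ (as explained below), in case (ii) of Theorem \ref{thm:Iwasawa_Type}, so that all the invariants in question are defined.

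The key algebraic step is the factorization
\begin{align}
Q^W_{Y,\alpha\circ\pi}(T)&=\prod_{\sigma}Q^{W,\sigma}_{X,\alpha}(T)^{d_\sigma},\\
Q^{W,\sigma}_{X,\alpha}(T)&:=\det\Bigl(\bm{D}^W(X)\bigotimes\bm{I}_{d_\sigma}-\sum_{g\in H}\bm{W}^{(g)}_{\tau,\alpha}\bigotimes\sigma(g)\Bigr),
\end{align}
where $\sigma$ runs over the irreducible representations of $H$ of degree $d_\sigma$ and $\bm{W}^{(g)}_{\tau,\alpha}:=\bigl(\sum_{e\in\E_{ij},\,\beta(e)=g}\tau(\alpha(e))w(e)\bigr)_{ij}$. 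To prove this I would order the vertices of $Y=X(H,\beta)$ in blocks indexed by $H$ and check, using that $\pi$ is unramified, that $\bm{D}^W(Y)-\bm{W}_{\tau,\alpha\circ\pi}=\bm{I}_{|H|}\bigotimes\bm{D}^W(X)-\sum_{g\in H}\bm{W}^{(g)}_{\tau,\alpha}\bigotimes\rho_{\mathrm{reg}}(g)$, with $\rho_{\mathrm{reg}}$ the right regular representation of $H$; block-diagonalizing $\rho_{\mathrm{reg}}\cong\bigoplus_\sigma\sigma^{\oplus d_\sigma}$ and taking determinants yields the claim. This is precisely the computation in the proof of Theorem \ref{thm:recurrence_formula} with the discrete Fourier matrices replaced by the isotypic change of basis and the variable $\tau$ retained; note $Q^{W,1}_{X,\alpha}=Q^W_{X,\alpha}$ for the trivial $\sigma$. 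Since $\mu$ and $\lambda$ are additive on products (immediate from their definitions in \S\ref{subsection:CM}), this gives $\mu(Q^W_{Y,\alpha\circ\pi})=\sum_\sigma d_\sigma\,\mu(Q^{W,\sigma}_{X,\alpha})$ and the analogous identity for $\lambda$; inspecting the trivial factor already yields the easy implication $\mu(Y,\alpha\circ\pi)=0\Rightarrow\mu(X,\alpha)=0$ and forces $\kappa^W(X_n)\neq0$.

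The heart of the argument, and the step I expect to be the main obstacle, is the reduction modulo the maximal ideal $\mathfrak{m}$ of $\mathcal{O}$, after enlarging $K$ so that $\mathcal{O}$ contains the values of every $\sigma$. Because $H$ is a $p$-group and the residue field $\F_q$ has characteristic $p$, every irreducible $\F_q$-representation of $H$ is trivial, so each reduced matrix $\bar\sigma(g)$ is simultaneously unipotent upper-triangular. Consequently the reduced matrix defining $Q^{W,\sigma}_{X,\alpha}$ is block upper-triangular with every diagonal block equal to the reduction of $\bm{D}^W(X)-\bm{W}_{\tau,\alpha}$, whence
\begin{align}
\overline{Q^{W,\sigma}_{X,\alpha}}=\overline{Q^W_{X,\alpha}}^{\,d_\sigma}\quad\text{in }\Omega_d.
\end{align}
If $\mu(X,\alpha)=\mu(Q^W_{X,\alpha})=0$, then $\overline{Q^W_{X,\alpha}}\neq0$, and since $\Omega_d$ is a domain each $\overline{Q^{W,\sigma}_{X,\alpha}}\neq0$; thus every $\mu(Q^{W,\sigma}_{X,\alpha})=0$ and $\mu(Y,\alpha\circ\pi)=0$, which completes the equivalence. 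Moreover $\lambda(Q^{W,\sigma}_{X,\alpha})=\sum_{\mathfrak{p}}\ord_{\mathfrak{p}}\bigl(\overline{Q^W_{X,\alpha}}^{\,d_\sigma}\bigr)=d_\sigma\,\lambda(Q^W_{X,\alpha})$, so that $\lambda(Q^W_{Y,\alpha\circ\pi})=\sum_\sigma d_\sigma^2\,\lambda(Q^W_{X,\alpha})=|H|\,\lambda(Q^W_{X,\alpha})=[Y:X]\,\lambda(Q^W_{X,\alpha})$.

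Finally I would translate these identities for the characteristic elements back into the graph invariants via Theorem \ref{thm:Iwasawa_Type}. For $d\geq2$ one has $\lambda(X,\alpha)=\lambda(Q^W_{X,\alpha})$ and $\lambda(Y,\alpha\circ\pi)=\lambda(Q^W_{Y,\alpha\circ\pi})$, giving $\lambda(Y,\alpha\circ\pi)=[Y:X]\lambda(X,\alpha)$ at once; for $d=1$ the extra $-1$ appearing in both invariants produces $\lambda(Y,\alpha\circ\pi)=[Y:X]\lambda(Q^W_{X,\alpha})-1=[Y:X]\bigl(\lambda(X,\alpha)+1\bigr)-1$, as required. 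I would also note that enlarging $K$ to split the representations of $H$ alters neither $\mu$, which is normalized by $e(K\slash\Q_p)$, nor $\lambda$, so the invariants computed over the larger field agree with the original ones.
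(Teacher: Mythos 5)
Your proposal is correct in substance but takes a genuinely different route from the paper. The paper first reduces to the case $\gal(Y/X)\cong\Z/p\Z$ by d\'evissage, and then obtains the factorization of $h_Y(\psi_n,1,\alpha_n\circ\pi)$ into the $h_X(\phi\otimes\psi_n,1,\gamma_n)$ only \emph{after evaluation} at the points $\bm{\zeta_{\psi_n}}-\bm{1}$, by combining the induction and additivity properties of Mizuno--Sato $L$-functions (Theorem \ref{thm:property_SatoL}) with the three-term determinant formula; the authors explicitly remark that they do not know whether the power-series identity $Q^W_{Y,\alpha\circ\pi}(T)=\prod_{\phi}Q^W_{\phi}(T)$ holds, and instead transfer the evaluated identity to an identity of $\mu$- and $\lambda$-invariants indirectly via the asymptotic formula of Theorem \ref{thm:multi_class_number}. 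You bypass both steps: you keep the full $p$-group $H$ and prove the factorization $Q^W_{Y,\alpha\circ\pi}=\prod_\sigma (Q^{W,\sigma}_{X,\alpha})^{d_\sigma}$ as an honest identity in $\Lambda_d\otimes_{\mathcal O}K$ by decomposing $\bm{D}^W(Y)-\bm{W}_{\tau,\alpha\circ\pi}$ through the regular representation of $H$ (the same linear algebra as in the proof of Theorem \ref{thm:recurrence_formula}, with $\tau$ kept formal), which makes the additivity of $\mu$ and $\lambda$ immediate and, as a bonus, settles affirmatively the identity the paper leaves open. Your reduction step, replacing the paper's use of $\bar\zeta_p=\bar 1$ by the unipotence of mod-$p$ representations of a $p$-group, is the right generalization and correctly yields $\overline{Q^{W,\sigma}_{X,\alpha}}=\overline{Q^W_{X,\alpha}}^{\,d_\sigma}$ and $\sum_\sigma d_\sigma^2=[Y:X]$.

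Two points deserve care. First, to reduce a $d_\sigma$-dimensional $\sigma$ modulo $\mathfrak m$ you must fix an $H$-stable $\mathcal O$-lattice so that $\sigma(g)\in M_{d_\sigma}(\mathcal O)$; this exists and the determinant is insensitive to the choice, but it should be said. Second, in the equivalence $\mu(Y,\alpha\circ\pi)=0\Leftrightarrow\mu(X,\alpha)=0$, the implication from the vanishing of $\mu(Q^W_{Y,\alpha\circ\pi})=\sum_\sigma d_\sigma\,\mu(Q^{W,\sigma}_{X,\alpha})$ to the vanishing of the individual summands (in particular of the trivial factor) uses that the summands cannot be negative in a compensating way; since the weights lie in $\bar\Q_p$ the invariants $\mu$ of Definition \ref{df:CM81_Definition 1.1-2} may a priori be negative, so one should either normalize all factors by a common power of $\varpi$ before reducing, or argue via the congruence $Q^{W,\sigma}_{X,\alpha}\equiv (Q^W_{X,\alpha})^{d_\sigma}$ after such a normalization. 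This gap is present in exactly the same form in the paper's own chain of equivalences, so it is a shared gloss rather than a defect specific to your argument.
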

\begin{proof}
Note that all $X(\Gamma_n,\alpha_n)$ are also connected by the connectivity of $Y_n$. Also, since $\kappa^W(Y_n)\neq 0$ for every $n\geq 0$, by Theorem \ref{thm:recurrence_formula}, we must have $\kappa^W(X_n)\neq 0$ for every $n\geq0$. It suffices to show the theorem in the case $G:=\gal(Y\slash X)\cong \Z\slash p\Z$ by the same argument of \cite[Theorem 4.1]{RV22}.

For $Y/X$, there exists $\beta:\E_X\to G$ such that $Y=X(G,\beta)$. Define $\gamma_n:\E_X\ni e\mapsto (\beta(e),\alpha_n(e))\in G\times \Gamma_n$. Then, by the weighted-generalization of \cite[Proposition 3.8]{RV22}, we have $Y_n=Y(\Gamma_n,\alpha_n\circ\pi_{\E})\cong X(G\times \Gamma_n,\gamma_n)$ and $\gal(Y_n/X)\cong G\times\Gamma_n$.

For $\phi\in \widehat{G}$, we define the matrix $\bm{W}_{\phi,\tau,\alpha}\in M_m(\mathcal{O}\llbracket T_1,\cdots,T_ d\rrbracket)$ by
\begin{align}
\bm{W}_{\phi,\tau,\alpha}:=\left(\sum_{e\in \E_{ij}}\phi(\beta(e))\tau(\alpha(e))w(e)\right)_{ij}.
\end{align}
Put $Q^W_{\phi}(T):=\det(\bm{D}^W(X)-\bm{W}_{\phi,\tau,\alpha})\in\Lambda_{K,d}$. Here we note that elements of $\widehat{G\times \Gamma_n}$ are of the forms $\phi\otimes\psi_n$ ($\phi\in \hat{G}$, $\psi_n\in \widehat{\Gamma}_n$). As was the case with \cite[p.19, (4.2)]{RV22}, we have
\begin{align}
\indu_{1\times \Gamma_n}^{G\times \Gamma_n}(\psi_n)\cong\bigoplus_{\phi\in \hat{G}}\phi\otimes\psi_n.
\end{align}
Therefore, we have
\begin{align}
L(Y,t,\psi_n,\alpha_n\circ\pi_{\E})&\overset{\eqref{eq:Sato_Induction}}{=}L(X,t,\indu_{1\times \Gamma_n}^{G\times \Gamma_n}(\psi_n),\gamma_n)=L(X,t,\bigoplus_{\phi\in\hat{G}}\phi\otimes\psi_n,\gamma_n)\\
&\overset{\eqref{thm:sum_prod}}{=}\prod_{\phi\in \hat{G}}L(X,t,\phi\otimes\psi_n,\gamma_n).
\end{align}
By using the three-term determinant formula (Theorem \ref{thm:weighted_L_3term}) on each side, we have
\begin{align}
&(1-t^2)^{-\chi(Y)}\det\left(\bm{I}_{pm}-t\sum_{\sigma\in \Gamma_n}\bm{W}_{Y,\alpha_n\circ\pi_{\E}}(\sigma)\bigotimes\psi_n(\sigma)+t^2(\bm{D}^W(Y)-\bm{I}_{pm})\right)\\
&=\prod_{\phi\in\hat{G}}(1-t^2)^{-\chi(X)}\det\left(\bm{I}_m-t\sum_{\sigma\in G\times\Gamma_n}\bm{W}_{{X},\gamma_n}(\sigma)\bigotimes(\phi\otimes\psi_n)(\sigma)+t^2(\bm{D}^W(X)-\bm{I}_m)\right).
\end{align}
This implies
\begin{equation}\label{eqKida1}
h_Y(\psi_n,1,\alpha_n\circ\pi_{\E})=\prod_{\phi\in\widehat{G}}h_X(\phi\otimes\psi_n,1,\gamma_n).
\end{equation}
Since
\begin{align}
\bm{W}_{X,\phi\otimes\psi_n}&=\sum_{\sigma\in G\times\Gamma_n}(\phi\otimes\psi_n)(\sigma) \bm{W}_{X,\gamma_n}(\sigma)\\
&=\sum_{\sigma\in G\times\Gamma_n}((\phi\otimes\psi_n)(\sigma)\sum_{\substack{e\in \E_{X,ij}\\ \gamma_n(e)=\sigma}}w(e))\\
&=\sum_{e\in \E_{X,ij}}(\phi\otimes\psi_n)(\gamma_n(e))w(e)\\
&=\sum_{e\in \E_{X,ij}}\phi(\beta(e))\psi_n(\alpha_n(e))w(e),
\end{align}
we have
\begin{align}
Q^W_{\phi}(\bm{\zeta_{\psi_n}}-\bm{1})&\overset{\eqref{rho_evaluate}}{=}\det\left(\bm{D}^W(X)-\left(\sum_{e\in \E_{X,ij}}\phi(\beta(e))\psi_n(\alpha_n(e))w(e)\right)_{ij}\ \right)\\
&=\det(\bm{D}^W(X)-\bm{W}_{X,\phi\otimes \psi_n})=h_X(\phi\otimes\psi_n,1,\gamma_n)\label{eqKida2}.
\end{align}
Moreover, we have
\begin{align}
Q^W_{Y,\alpha\circ\pi_{\E}}(\bm{\zeta_{\psi_n}}-\bm{1})&=\det\left(\bm{D}^W(Y)-\left(\sum_{e\in \E_{Y,ij}}\tau(\alpha_n\circ\pi_{\E}(e))w(e)\right)_{ij}\ \right)(\bm{\zeta_{\psi_n}}-\bm{1})\\
&\overset{\eqref{rho_evaluate}}{=}\det\left(\bm{D}^W(Y)-\left(\sum_{e\in \E_{Y,ij}}\psi_n(\alpha_n\circ\pi_{\E}(e))w(e)\right)_{ij}\right)\\
&=\det(\bm{D}^W(Y)-\bm{W}_{Y,\psi_n,\alpha_n\circ\pi_{\E}})\\
&=h_Y(\psi_n,1,\alpha_n\circ\pi_{\E})\label{eqKida3}.
\end{align}
Hence, for arbitrary $\psi_n\in\widehat{\Gamma}_n$, we have
\begin{align}\label{eq:decomposition}
Q^W_{Y,\alpha\circ\pi_{\E}}(\bm{\zeta_{\psi_n}}-{1})\overset{\eqref{eqKida3}}{=}h_Y(\psi_n,1,\alpha_n\circ\pi_{\E})\overset{\eqref{eqKida1}}{=}\prod_{\phi\in\widehat{G}}h_X(\phi\otimes \psi_n,1,\gamma_n)\overset{\eqref{eqKida2}}{=}\prod_{\phi\in\widehat{G}}Q^W_{\phi}(\bm{\zeta_{\psi_n}}-\bm{1}).
\end{align}

We do not know whether $Q^W_{Y,\alpha\circ\pi_{\E}}(T)=\prod_{\phi\in\widehat{G}}Q^W_{\phi}(T)$ holds or not. However, by Theorem \ref{thm:multi_class_number}, we must have
\begin{align}
\mu(Q^W_{Y,\alpha\circ\pi_{\E}}(T))=\mu\left(\prod_{\phi\in \widehat{G}}Q^W_{\phi}(T)\right)=\sum_{\phi\in\widehat{G}}\mu(Q^W_{\phi}(T)),\\
\lambda(Q^W_{Y,\alpha\circ\pi_{\E}}(T))=\lambda\left(\prod_{\phi\in \widehat{G}}Q^W_{\phi}(T)\right)=\sum_{\phi\in\widehat{G}}\lambda(Q^W_{\phi}(T)).
\end{align}
Put $\mathcal{O}\llbracket T\rrbracket:=\mathcal{O}\llbracket T_1,\ldots, T_d\rrbracket$ and $\F_q\llbracket T\rrbracket:=\F_q\llbracket T_1,\ldots,T_d\rrbracket$. By the condition (2), we have $Q_{X,\alpha}^W,Q_{Y,\alpha\circ \pi_{\E}}^W\in \mathcal{O}\llbracket T\rrbracket$. Since $\zeta_p\in\mathcal{O}$ by the definition of $K$, we also have $Q^W_{\phi}\in \mathcal{O}\llbracket T\rrbracket$. Consider the reduction map
\begin{align}
\bar{\ \cdot\ }:\mathcal{O}\llbracket T\rrbracket\ni f(T)\mapsto \overline{f(T)}\in \mathcal{O}\llbracket T\rrbracket/\mathfrak{m}\llbracket T\rrbracket (\cong (\mathcal{O}/\mathfrak{m})\llbracket T\rrbracket\cong\F_q\llbracket T\rrbracket).
\end{align}
Since $\zeta_p\in\mathcal{O}$, we have $\zeta_p-1\in\mathfrak{m}$. Thus, we have $\bar{\zeta}_p=\bar{1}$ in $\mathcal{O}/\mathfrak{m}$, and so $\overline{\bm{W}}_{\phi,\tau,\alpha}=\overline{\bm{W}}_{\tau,\alpha}$ in $M_m(\F_q\llbracket T\rrbracket)$ for every $\phi\in\widehat{G}$. Hence
\begin{align}
\overline{Q^W_{\phi}(T)}=\overline{\det(\bm{D}^W(X)-\bm{W}_{\phi,\tau,\alpha})}=\overline{\det(\bm{D}^W(X)-\bm{W}_{\tau,\alpha})}=\overline{Q^W_{X,\alpha}(T)}\ \text{in}\ \F_q\llbracket T\rrbracket.
\end{align}
Therefore, we obtain
\begin{align}
\mu(X,\alpha)=0 &\Longleftrightarrow \mu(Q^W_{X,\alpha}(T))=0\\
&\Longleftrightarrow \overline{Q^W_{X,\alpha}(T)}\neq 0\ \text{in}\ \F_q\llbracket T\rrbracket\\
&\Longleftrightarrow \overline{Q^W_{\phi}(T)}\neq 0\ \text{in}\ \F_q\llbracket T\rrbracket,\ ^{\forall}\phi\in \widehat{G}\\
&\Longleftrightarrow \mu(Q^W_{\phi}(T))=0,\ ^{\forall}\phi\in \widehat{G}\\
&\Longleftrightarrow \mu(Q^W_{Y,\alpha\circ\pi_{\E}}(T))=\sum_{\phi\in\widehat{G}}\mu(Q^W_{\phi}(T))=0\\
&\Longleftrightarrow \mu(Y,\alpha\circ\pi_{\E})=0.
\end{align}
Now, suppose that the above equivalent condition is satisfied. If $d$ is greater than or equal to $2$, then we have
\begin{align}
\lambda(Y,\alpha\circ\pi_{\E})&=\lambda(Q^W_{Y,\alpha\circ\pi_{\E}}(T))=\sum_{\phi\in\widehat{G}}\lambda(Q^W_{\phi}(T))\\
&=\sum_{\phi\in\widehat{G}}\lambda(Q^W_{X,\alpha}(T))=[Y:X]\lambda(Q^W_{X,\alpha}(T))=[Y:X]\lambda(X,\alpha).
\end{align}
If $d$ is equal to $1$, then
\begin{align}
\lambda(Y,\alpha\circ\pi_{\E})+1=\lambda(Q^W_{Y,\alpha\circ\pi_{\E}}(T))=[Y:X]\lambda(Q^W_{X,\alpha}(T))=[Y:X](\lambda(X,\alpha)+1).
\end{align}
\end{proof}
\section{On calculation methods for characteristic elements}\label{sec:calculation}
Let $X$ be a weighted undirected graph with $V(X)=\{v_1,\cdots,v_m\}$ and $\alpha:\E(D_X)\to \Z_p^d$ be a voltage assignment. We denote the composition of the $n$-th projection $\Z_p^d\twoheadrightarrow (\Z\slash p^n\Z)^d$ and $\alpha$ by $\alpha_n$ for each $n\geq 1$. In this section, following \cite[\S 5-\S 6]{MV24}, we introduce an algorithm of calculation of $Q_{X,\alpha}^W(T):=Q_{D_X,\alpha}^W(T)$. We will utilize this algorithm in \S\ref{sec:examples}. To begin with, we define an \textit{orientation} of $\E(D_X)$. 
\begin{defi}
A subset $S_X$ of $\E(D_X)$ is called an \textit{orientation} if $S_X$ satisfies
\begin{itemize}
\item $S_X\cap \bar{S}_X=\emptyset$,
\item $\E(D_X)=S_X\cup\bar{S}_X$,
\end{itemize}
where $\bar{S}_X:=\{\bar{e}\in \E(D_X)\ |\ e\in S_X\}$.
\end{defi}
If $S_X$ is an orientation of $X$, then, by the definition of voltage assignments, for every (possibly infinite) group $G$, we find that giving a map $\alpha:S_X\to G$ is equivalent to giving a voltage assignment $\alpha:\E(D_X)\to G$. Thus we identify these two corresponding maps and call $\alpha:S_X \to G$ a voltage assignment as well.

We fix an orientation $S_X$ of $X$. We set, for $1\leq i,j\leq m$,
\begin{align}
S_{ij}:=\{s\in S_X\ |\ o(s)=v_i,\ t(s)=v_j\}.
\end{align}
The following theorem is the $\Z_p^d$-version of the weighted-analogue of \cite[Proposition 5.5]{MV24}.
\begin{defi}
Define square matrices $\bm{B}^W$ and $\bm{C}^W$ of order $m$ by
\begin{align}
B_{ij}^W:=\sum_{s\in S_{ij}}w(s)\mbox{ and }C_{ij}^W:=\sum_{s\in S_{ji}}w(s).
\end{align}
Note that, since $\bm{C}^W=(\bm{B}^W)^{\mathsf{T}}$, it suffices to calculate $\bm{B}^W$ in practical use.
\end{defi}
We prove the $\Z_p^d$-version of the weighted-analogue of \cite[Corollary 5.6]{MV24}.
\begin{thm}\label{thm:computation}
Write
\begin{align}
\alpha(S_X)=\{\bm{b}_1,\bm{b}_2,\cdots, \bm{b}_{N_{\alpha}}\}\subset \Z_p^d.
\end{align}
For each $1\leq k\leq N_{\alpha}$, put 
\begin{align}
\gamma_{ij}^{(k)}:=\sum_{\substack{s\in S_{ij}\\ \alpha(s)=\bm{b}_k}}w(s).
\end{align}
Define a square matrix $\bm{P}^W$ of order $m$ by
\begin{align}
&P_{ij}^W(X_1,X_2,\cdots,X_{N_{\alpha}},Y_1,Y_2,\cdots,Y_{N_{\alpha}})\\
&:=\gamma_{ij}^{(1)}X_1+\cdots+\gamma_{ij}^{(N_{\alpha})}X_{N_{\alpha}}+\gamma_{ji}^{(1)}Y_1+\cdots+\gamma_{ji}^{({N_{\alpha}})}Y_{N_{\alpha}},
\end{align}
and
\begin{align}
\bm{M}^W&:=\bm{D}^W(X)-\bm{B}^W-\bm{C}^W-\bm{P}^W\\
&=(d_{ij}^W-B_{ij}^W-C_{ji}^W-P_{ij}^W(X_1,\cdots,X_{N_{\alpha}},Y_1,\cdots,Y_{N_{\alpha}}))_{i,j}.
\end{align}
Put
\begin{align}
P^W(X_1,\cdots,X_{N_{\alpha}},Y_1,\cdots,Y_{N_{\alpha}}):=\det\bm{M}^W.
\end{align}
Then we have
\begin{align}
Q_{X,\alpha}^W(T_1,T_2,\ldots,T_d)=P^W(Q_{\bm{b}_1},Q_{\bm{b}_2},\ldots,Q_{\bm{b}_{N_{\alpha}}},Q_{-\bm{b}_1},Q_{-\bm{b}_2},\ldots,Q_{-\bm{b}_{N_{\alpha}}}),
\end{align}
where
\begin{align}
Q_{\bm{b}}:=Q_{\bm{b}}(T_1,\ldots,T_d):=\tau(\bm{b})-1\in \Lambda_d
\end{align} 
for $\bm{b}\in \Z_p^d.$
\end{thm}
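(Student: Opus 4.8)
The plan is to reduce the statement to an entrywise matrix identity and then invoke the fact that evaluating a polynomial is a ring homomorphism, so that it commutes with the determinant. Since $P^W=\det\bm{M}^W$ is by construction the determinant of a matrix whose entries lie in the polynomial ring over $\Lambda_d\otimes_{\mathcal{O}}K$ in the indeterminates $X_1,\dots,X_{N_{\alpha}},Y_1,\dots,Y_{N_{\alpha}}$, and the substitution $X_k\mapsto Q_{\bm{b}_k}$, $Y_k\mapsto Q_{-\bm{b}_k}$ is a $(\Lambda_d\otimes_{\mathcal{O}}K)$-algebra homomorphism into $\Lambda_d\otimes_{\mathcal{O}}K$, it will suffice to show that the specialization of $\bm{M}^W$ agrees entry by entry with $\bm{D}^W(X)-\bm{W}_{\tau,\alpha}$, whose determinant is $Q_{X,\alpha}^W$ by definition.

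First I would expand the $(i,j)$-entry of $\bm{W}_{\tau,\alpha}$ using the fixed orientation. The fiber $\E_{ij}$ of directed edges from $v_i$ to $v_j$ splits as $S_{ij}\subset S_X$ together with the inverse edges $\{\bar{s}\ |\ s\in S_{ji}\}\subset\bar{S}_X$. For an inverse edge $\bar{s}$, the defining relation of a voltage assignment gives $\alpha(\bar{s})=\alpha(s)^{-1}$, hence $\tau(\alpha(\bar{s}))=\tau(-\alpha(s))$, and because $D_X$ is derived from a weighted undirected graph its weighted matrix is strongly symmetric, so $w(\bar{s})=w(s)$. Consequently
\begin{align}
(\bm{W}_{\tau,\alpha})_{ij}=\sum_{s\in S_{ij}}\tau(\alpha(s))w(s)+\sum_{s\in S_{ji}}\tau(-\alpha(s))w(s).
\end{align}

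Next I would partition $S_{ij}$ and $S_{ji}$ according to the finitely many values $\bm{b}_1,\dots,\bm{b}_{N_{\alpha}}$ assumed by $\alpha$ on $S_X$; by the definition of $\gamma_{ij}^{(k)}$ the two sums become $\sum_k\tau(\bm{b}_k)\gamma_{ij}^{(k)}$ and $\sum_k\tau(-\bm{b}_k)\gamma_{ji}^{(k)}$. Writing $\tau(\pm\bm{b}_k)=1+Q_{\pm\bm{b}_k}$ and separating the constant part from the $Q$-part, the constant contributions collapse into $\sum_k\gamma_{ij}^{(k)}=B_{ij}^W$ and $\sum_k\gamma_{ji}^{(k)}=C_{ij}^W$, while the remaining contributions are exactly $P_{ij}^W$ evaluated at $X_k=Q_{\bm{b}_k}$, $Y_k=Q_{-\bm{b}_k}$. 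Comparing this with the defining expression of $\bm{M}^W$ entry by entry yields the specialized identity $\bm{M}^W=\bm{D}^W(X)-\bm{W}_{\tau,\alpha}$, and taking determinants finishes the proof.

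The argument is essentially a bookkeeping computation rather than a conceptual one, but it uses the hypotheses built into $D_X$ in exactly one place, namely the inverse edges: one must invoke $\alpha(\bar{s})=\alpha(s)^{-1}$ together with the strong symmetry $w(\bar{s})=w(s)$ to re-express the entire $\bar{S}_X$-contribution through the data $\gamma_{ji}^{(k)}$ attached to $S_X$. Note that the edgewise identity $w(\bar{s})=w(s)$ is genuinely stronger than mere symmetry of $\bm{W}(X)$; without it these terms would not collapse and the closed formula would fail. Accordingly, the step I expect to demand the most care is the careful tracking of the $\pm1$ constant terms and of the transpose indices relating $\bm{B}^W$ and $\bm{C}^W$, since this is where a sign or index slip is most likely to enter.
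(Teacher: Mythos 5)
Your proposal is correct and follows essentially the same route as the paper's own proof: expand $(\bm{W}_{\tau,\alpha})_{ij}$ along the orientation, use $\alpha(\bar{s})=\alpha(s)^{-1}$ and the strong symmetry $w(\bar{s})=w(s)$ of $D_X$ to push everything onto $S_{ij}$ and $S_{ji}$, split $\tau(\pm\bm{b}_k)=1+Q_{\pm\bm{b}_k}$, and conclude by specializing the determinant. The one point to watch is that this computation produces $d_{ij}^W-B_{ij}^W-C_{ij}^W-P_{ij}^W(Q_{\bm{b}_1},\ldots,Q_{-\bm{b}_{N_\alpha}})$ with a \emph{minus} sign on $P_{ij}^W$, whereas the displayed definition of $\bm{M}^W$ carries $+\bm{P}^W$; this sign discrepancy is already present between the paper's statement and its own proof (compare the worked examples), so your derivation matches the intended $\bm{M}^W=\bm{D}^W(X)-\bm{B}^W-\bm{C}^W-\bm{P}^W$ and is not a gap on your part.
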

\begin{proof}
It holds that
\begin{align}
&Q_{X,\alpha}^W(T_1,T_2,\ldots,T_d)=\det(\bm{D}^W(X)-\bm{W}_{\tau,\alpha})\\
&=\det\left(d_{ij}^W-\sum_{s\in S_{ij}}\tau(\alpha(s))w(s)-\sum_{s\in S_{ji}}\tau(-\alpha(s))w(s)\right)_{ij}\\
&=\det\left(d_{ij}^W-B_{ij}^W-C_{ij}^W-\sum_{s\in S_{ij}}(\tau(\alpha(s))-1)w(s)-\sum_{s\in S_{ji}}(\tau(-\alpha(s))-1)w(s)\right)_{ij}\\
&=\det\left(d_{ij}^W-B_{ij}^W-C_{ij}^W-\sum_{s\in S_{ij}}Q_{\alpha(s)}w(s)-\sum_{s\in S_{ji}}Q_{-\alpha(s)}w(s)\right)_{ij}\\
&=\det\left(d_{ij}^W-B_{ij}^W-C_{ij}^W-\sum_{k=1}^{N_{\alpha}} Q_{\bm{b}_k}\gamma_{ij}^{(k)}-\sum_{k=1}^{N_{\alpha}} Q_{-\bm{b}_k}\gamma_{ji}^{(k)}\right)_{ij}\\
&=\det\left(d_{ij}^W-B_{ij}^W-C_{ij}^W-P_{ij}^W(Q_{\bm{b}_1},Q_{\bm{b}_2},\ldots,Q_{\bm{b}_{N_{\alpha}}},Q_{-\bm{b}_1},Q_{-\bm{b}_2},\ldots,Q_{-\bm{b}_{N_{\alpha}}})\right)_{ij}\\
&=P^W(Q_{\bm{b}_1},Q_{\bm{b}_2},\ldots,Q_{\bm{b}_{N_{\alpha}}},Q_{-\bm{b}_1},Q_{-\bm{b}_2},\ldots,Q_{-\bm{b}_{N_{\alpha}}}).
\end{align}
\end{proof}
\begin{rem}
Since $\bm{M}^W(0,\ldots,0)$ coincides with the Laplacian matrix of $X$, we have $\det \bm{M}^W(0,\ldots,0)=0$. Hence we find that the constant term of $\det\bm{M}^W$ is $0$.
\end{rem}
\section{Examples}\label{sec:examples}
In this section, we give numerical examples of our main theorems. We calculate Iwasawa invariants for $\Z_p^d$-towers of weighted graphs via Theorem \ref{thm:computation}. For all examples in this section, we give figures of weighted undirected graphs, consider $\Z_p^d$-towers over the weighted symmetric digraphs derived from them, and observe how our main theorems work. Throughout this section, for simplicity, we assume that all weights are in $\Q_p$. 
\begin{ex}
\noindent Let $p=2$ and $d=1$. Let $X$ be a bouquet with two loops.
 \begin{equation*}
 \begin{tikzpicture}[baseline={([yshift=-1.7ex] current bounding box.center)}]
\node[draw=none,minimum size=2cm,regular polygon,regular polygon sides=1] (a) {};
\foreach \x in {1} 
  \fill (a.corner \x) circle[radius=0.7pt];
\draw (a.corner 1) to [in=50,out=130,loop] (a.corner 1);
\draw (a.corner 1) to [in=50,out=130,distance = 0.5cm,loop] (a.corner 1);
\end{tikzpicture}
 \end{equation*}
We assign a weight $a$ to the inner loop and a weight $b$ to the outer loop. Then $\bm{W}(X)= \begin{pmatrix} a+b  \\
\end{pmatrix}.$
Let $S=\{s_1,s_2\}$ be an orientation and let $\alpha:S\to \Z_2$ be a voltage assignment defined by $\alpha(s_1)=1$ and $\alpha(s_2)=1$.
Then we have 
\begin{align}
\bm{D}^W(X)= \begin{pmatrix}
2(a+b) \\
\end{pmatrix},\bm{B}^W= \begin{pmatrix}
a+b \\
\end{pmatrix},\bm{P}^W= \begin{pmatrix}
(a+b)X_1+(a+b)Y_1 \\
\end{pmatrix}
\end{align}
Thus, the matrix $\bm{M}^W$ is equal to $(-(a+b)X_1-(a+b)Y_1)$, and so we have $P^W(X_1,Y_1)= -(a+b)X_1-(a+b)Y_1$. By Theorem \ref{thm:computation}, we find that the formal power series $Q^W$ is $-(a+b)T^2(1+T)^{-1}$. If $a+b=0$, then $Q^W=0$. If $a+b\neq 0$, then we have $\mu=v_2(a+b)$ and $\lambda=1$.

In the case $a+b=0$, we find that $\kappa^W_X=1$ but $\kappa^W_n=0$ for $n\geq 1$. This means that even if the weighted complexity of the bottom layer is non-zero, those of the upper layers can be zero. Hence the non-zero assumption of $\kappa_n^W$ for $n\geq 0$ in Theorem \ref{thm:Iwasawa_Type} (ii) is consequential.

\begin{equation*}
	\begin{tikzpicture}[baseline={([yshift=-1.7ex] current bounding box.center)}]
\node[draw=none,minimum size=2cm,regular polygon,regular polygon sides=1] (a) {};
\foreach \x in {1}
  \fill (a.corner \x) circle[radius=0.7pt];
\draw (a.corner 1) to [in=50,out=130,loop] (a.corner 1);
\draw (a.corner 1) to [in=50,out=130,distance = 0.5cm,loop] (a.corner 1);
\end{tikzpicture}
\, \, \, \longleftarrow \, \, \,
\begin{tikzpicture}[baseline={([yshift=-0.6ex] current bounding box.center)}]
\node[draw=none,minimum size=2cm,regular polygon,rotate = -45,regular polygon sides=4] (a) {};

  \fill (a.corner 1) circle[radius=0.7pt];
  \fill (a.corner 3) circle[radius=0.7pt];
  
  \path (a.corner 1) edge [bend left=20] (a.corner 3);
  \path (a.corner 1) edge [bend left=60] (a.corner 3);
  \path (a.corner 1) edge [bend right=20] (a.corner 3);
  \path (a.corner 1) edge [bend right=60] (a.corner 3);

\end{tikzpicture}
\, \, \, \longleftarrow \, \, \,
  \begin{tikzpicture}[baseline={([yshift=-0.6ex] current bounding box.center)}]
\node[draw=none,minimum size=2cm,regular polygon,regular polygon sides=4] (a) {};
 \foreach \x in {1,2,3,4}
\fill (a.corner \x) circle[radius=0.7pt];

\foreach \y\z in {1/2,2/3,3/4,4/1}
  \path (a.corner \y) edge [bend left=20] (a.corner \z);

\foreach \y\z in {1/2,2/3,3/4,4/1}
  \path (a.corner \y) edge [bend right=20] (a.corner \z);

  \end{tikzpicture}
\, \, \, \longleftarrow \, \, \,
\begin{tikzpicture}[baseline={([yshift=-0.6ex] current bounding box.center)}]
\node[draw=none,minimum size=2cm,regular polygon,regular polygon sides=8] (a) {};
 \foreach \x in {1,2,...,8}
\fill (a.corner \x) circle[radius=0.7pt];

\foreach \y\z in {1/2,2/3,3/4,4/5,5/6,6/7,7/8,8/1}
  \path (a.corner \y) edge [bend left=20] (a.corner \z);

\foreach \y\z in {1/2,2/3,3/4,4/5,5/6,6/7,7/8,8/1}
  \path (a.corner \y) edge [bend right=20] (a.corner \z);

  \end{tikzpicture}
\, \, \, \longleftarrow \, \, \,\cdots
\end{equation*}
\end{ex}
\begin{ex}\label{ex:quantum}
\noindent Let $p=2$ and $d=1$. Let $X=\mathcal{K}_3$ be a complete graph with $3$ vertices. We assign the weights on graph $X$ by the following matrix
\begin{align}
\bm{W}(X)= \begin{pmatrix}
0 & b & a  \\
b & 0 & c \\
a & c & 0 \\
\end{pmatrix}.
\end{align}
Let $S=\{s_1,s_2,s_3\}$ be an orientation and $\alpha: S \to \Z_2 $ a voltage assignment defined by $\alpha(s_1)=0$, $\alpha(s_2)=0$ and $\alpha(s_3)=1$. Then we have 
 \begin{align}
 \bm{D}^{W}(X)= \begin{pmatrix}
a+b & 0 & 0  \\
0 & b+c & 0 \\
0 & 0 & c+a \\
\end{pmatrix}\!,\ \bm{B}^{W}= \begin{pmatrix}
0 & b & 0  \\
0 & 0 & c \\
a & 0 & 0 \\
\end{pmatrix}\!,\ 
\bm{P}^W= \begin{pmatrix}
0 & bX_1 & aY_1  \\
bY_1 & 0 & cX_2 \\
aX_1 & cY_2 & 0 \\
\end{pmatrix}
\end{align}
Thus, the matrix $\bm{M}^W$ is given by
\begin{align}
\begin{pmatrix}
a+b & -b(1+X_1) & -a(1+Y_1)  \\
-b(1+Y_1) & b+c & -c(1+X_2) \\
-a(1+X_1) & -c(1+Y_2) & c+a \\
\end{pmatrix},
\end{align}
and so we have 
\begin{align}
P^W= (a+b)(b+c)(c+a)-abc(1+X_1)^2(1+X_2)-abc(1+Y_1)^2(1+Y_2)\\ -c^2(a+b)(1+X_2)(1+Y_2)-a^2(b+c)(1+X_1)(1+Y_1)-b^2(c+a)(1+X_1)(1+Y_1). 
\end{align}
Therefore, the formal power series $Q^W$ is 
\begin{align}
2abc-abc(1+T)-abc(1+T)^{-1}=-abcT^2(1+T)^{-1}.
\end{align} 
Hence, if $abc=0$, then we have $Q^W=0$. On the other hand, if $abc\neq 0$, then we have $\mu=v_2(abc)$ and $\lambda=1$. 

\begin{equation*}
\begin{tikzpicture}[baseline={([yshift=-0.6ex] current bounding box.center)}]

\node[draw=none,minimum size=2cm,regular polygon,regular polygon sides=3] (a) {};

\foreach \x in {1,2,...,3}
\fill (a.corner \x) circle[radius=0.7pt];

\foreach \y\z in {1/2,2/3,3/1}
  \path (a.corner \y) edge (a.corner \z);

\end{tikzpicture}
\, \, \, \longleftarrow \, \, \,
\begin{tikzpicture}[baseline={([yshift=-0.6ex] current bounding box.center)}]

\node[draw=none,minimum size=2cm,regular polygon,regular polygon sides=6] (a) {};

\foreach \x in {1,2,3,4,5,6}
\fill (a.corner \x) circle[radius=0.7pt];

\foreach \y\z in {1/2,2/3,3/4,4/5,5/6,6/1}
  \path (a.corner \y) edge (a.corner \z);

\end{tikzpicture}
\, \, \, \longleftarrow \, \, \,
\begin{tikzpicture}[baseline={([yshift=-0.6ex] current bounding box.center)}]

\node[draw=none,minimum size=2cm,regular polygon,regular polygon sides=12] (a) {};

\foreach \x in {1,2,3,4,5,6,7,8,9,10,11,12}
\fill (a.corner \x) circle[radius=0.7pt];

\foreach \y\z in {1/2,2/3,3/4,4/5,5/6,6/7,7/8,8/9,9/10,10/11,11/12,1/12}
  \path (a.corner \y) edge (a.corner \z);
  \end{tikzpicture}
 \, \, \, \longleftarrow \, \, \,
 \begin{tikzpicture}[baseline={([yshift=-0.6ex] current bounding box.center)}]

\node[draw=none,minimum size=2cm,regular polygon,regular polygon sides=24] (a) {};

\foreach \x in {1,2,3,4,5,6,7,8,9,10,11,12,13,14,15,16,17,18,19,20,21,22,23,24}
\fill (a.corner \x) circle[radius=0.7pt];

\foreach \y\z in {1/2,2/3,3/4,4/5,5/6,6/7,7/8,8/9,9/10,10/11,11/12,12/13,13/14,14/15,15/16,16/17,17/18,18/19,19/20,20/21,21/22,22/23,23/24,24/1}
  \path (a.corner \y) edge (a.corner \z);
  
  \end{tikzpicture}
  \, \, \, \longleftarrow \, \, \,\cdots
\end{equation*}
\end{ex}

\begin{ex}\label{ex3}
\noindent Let $p=2$ and $d=2$. Let $X$ be a bouquet with two loops. We assign a weight $a$ to the inner loop and a weight $b$ to the outer loop. The the weighted matrix of $X$ is $\bm{W}(X)= \begin{pmatrix} a+b \end{pmatrix}$.
Let $S=\{s_1,s_2 \}$ be an orientation and $\alpha:S\to \Z^2_2$ a voltage assignment defined by $\alpha(s_1)=(0,1)$ and $\alpha(s_2)=(1,0)$. Then we have $$\bm{D}^{W}(X)= \begin{pmatrix}
2(a+b) \\
\end{pmatrix}, \bm{B}^W= \begin{pmatrix}
a+b \\
\end{pmatrix}, \bm{P}^W= \begin{pmatrix}
aX_1+bX_2+aY_1+bY_2 \\
\end{pmatrix}. $$ Thus, the matrix $\bm{M}^W$ is equal to $$-(aX_1+bX_2+aY_1+bY_2),$$ and so we have $$P^W(X_1,X_2,Y_1,Y_2)=-(aX_1+bX_2+aY_1+bY_2).$$ If $a=b=0$, then the formal power series $Q^W$ is zero. Otherwise, $$Q^W=p^{\mu}(a'((1+T_1)-1 )+b'((1+T_2)-1 )+a'((1+T_1)^{-1}-1 )+b'((1+T_2)^{-1} -1 )), $$  where $\mu=\max\{v_2(a),v_2(b)\}$. 

Since $p=2$,
\[
(a',b')\mbox{ is either }(\bar{1},\bar{1}),(\bar{0},\bar{1}),\mbox{ or }(\bar{1},\bar{0}).
\]
If $(\bar{a'},\bar{b'})=(\bar{1},\bar{1})$, then
\begin{align}
  Q^W_0  &= \overline{((1+T_1)-1 )+((1+T_2)-1 )+((1+T_1)^{-1}-1 )+((1+T_2)^{-1} -1)} \\ &= \overline{T_1^{2}(1+T_1)^{-1}+T^2_2(1+T_2)^{-1}} \\ &\doteq \overline{T_1^2(1+T_2)+T_2^2(1+T_1)} \\ &= \overline{(T_1+T_2)^2+T_1T_2(T_1+T_2)} \\ &= \overline{(T_1+T_2)((1+T_1)(1+T_2)-1)} \\ &=
  \overline{((1+T_1)+(1+T_2))((1+T_1)(1+T_2)-1)} \\&=
  \overline{((1+T_1)-(1+T_2))((1+T_1)(1+T_2)-1)} \\&\doteq
  \overline{((1+T_1)(1+T_2)^{-1}-1)((1+T_1)(1+T_2)-1)}
\end{align}
in $\mathbb{F}_2\llbracket T_1,T_2 \rrbracket$. Here, $``\doteq"$ means $``$equal up to multiplication by units.$"$

If $(\bar{a'},\bar{b'})=(\bar{1},\bar{0})$, then $$Q^W_0=\overline{((1+T_1)-1)^2(1+T_1)^{-1}}.$$

If $(\bar{a'},\bar{b'})=(\bar{0},\bar{1})$, then $$Q^W_0=\overline{((1+T_2)-1)^2(1+T_2)^{-1}}.$$

By the definition of generalized $\mu$ and $\lambda$ invariants, we have $\mu=\max\{v_p(a),v_p(b)\}$ and  $\lambda=2$.
\begin{equation*}
	\begin{tikzpicture}[baseline={([yshift=-1.7ex] current bounding box.center)}]
\node[draw=none,minimum size=2cm,regular polygon,regular polygon sides=1] (a) {};
\foreach \x in {1}
  \fill (a.corner \x) circle[radius=0.7pt];
\draw (a.corner 1) to [in=50,out=130,loop] (a.corner 1);
\draw (a.corner 1) to [in=50,out=130,distance = 0.5cm,loop] (a.corner 1);
\end{tikzpicture}
\, \, \, \longleftarrow \, \, \,
\begin{tikzpicture}[baseline={([yshift=-0.6ex] current bounding box.center)}] 
\node[draw=none,minimum size=2cm,regular polygon,regular polygon sides=4] (a) {};

\foreach \x in {1,2,...,4}
  \fill (a.corner \x) circle[radius=0.7pt];

\path (a.corner 1) edge [bend left=10] (a.corner 2);
\path (a.corner 1) edge [bend right=10] (a.corner 2);

\path (a.corner 3) edge [bend left=10] (a.corner 4);
\path (a.corner 3) edge [bend right=10] (a.corner 4);

\path (a.corner 1) edge [bend left=10] (a.corner 3);
\path (a.corner 1) edge [bend right=10] (a.corner 3);

\path (a.corner 2) edge [bend left=10] (a.corner 4);
\path (a.corner 2) edge [bend right=10] (a.corner 4); 
\end{tikzpicture}
\, \, \, \longleftarrow \, \, \,
\begin{tikzpicture}[baseline={([yshift=-0.6ex] current bounding box.center)}] 
\node[draw=none,minimum size=2cm,regular polygon,rotate = 22.5,regular polygon sides=16] (a) {};

\foreach \x in {1,2,...,16}
  \fill (a.corner \x) circle[radius=0.7pt];

\foreach \y\z in {1/5,2/6,3/7,4/8,5/9,6/10,7/11,8/12,9/13,10/14,11/15,12/16,13/1,14/2,15/3,16/4}
  \path (a.corner \y) edge (a.corner \z);
  
\foreach \y\z in {1/2,2/3,3/4,5/6,6/7,7/8,9/10,10/11,11/12,13/14,14/15,15/16}
  \path (a.corner \y) edge (a.corner \z);
  
\foreach \y\z in {1/4,5/8,9/12,13/16}
  \path (a.corner \y) edge (a.corner \z);
\end{tikzpicture}
\, \, \, \longleftarrow \, \, \,\
\begin{tikzpicture}[baseline={([yshift=-0.6ex] current bounding box.center)}] 
\node[draw=none,minimum size=2cm,regular polygon,regular polygon sides=64] (a) {};

\foreach \x in {1,2,...,64}
  \fill (a.corner \x) circle[radius=0.7pt];

\foreach \y\z in {}
  \path (a.corner \y) edge (a.corner \z);
  
\foreach \y\z in {1/2,2/3,3/4,4/5,5/6,6/7,7/8}
  \path (a.corner \y) edge (a.corner \z);

\foreach \y\z in {9/10,10/11,11/12,12/13,13/14,14/15,15/16}
  \path (a.corner \y) edge (a.corner \z);

\foreach \y\z in {17/18,18/19,19/20,20/21,21/22,22/23,23/24}
  \path (a.corner \y) edge (a.corner \z);

\foreach \y\z in {25/26,26/27,27/28,28/29,29/30,30/31,31/32}
  \path (a.corner \y) edge (a.corner \z);
  
\foreach \y\z in {33/34,34/35,35/36,36/37,37/38,38/39,39/40}
  \path (a.corner \y) edge (a.corner \z);

\foreach \y\z in {41/42,42/43,43/44,44/45,45/46,46/47,47/48}
  \path (a.corner \y) edge (a.corner \z);

\foreach \y\z in {49/50,50/51,51/52,52/53,53/54,54/55,55/56}
  \path (a.corner \y) edge (a.corner \z);
  
\foreach \y\z in {57/58,58/59,59/60,60/61,61/62,62/63,63/64}
  \path (a.corner \y) edge (a.corner \z);

\foreach \y\z in {1/8,9/16,17/24,25/32,33/40,41/48,49/56,57/64}
  \path (a.corner \y) edge (a.corner \z);
  
\foreach \y\z in {1/9,2/10,3/11,4/12,5/13,6/14,7/15,8/16,9/17,10/18,11/19,12/20,13/21,14/22,15/23,16/24,17/25,18/26,19/27,20/28,21/29,22/30,23/31,24/32,25/33,26/34,27/35,28/36,29/37,30/38,31/39,32/40,33/41,34/42,35/43,36/44,37/45,38/46,39/47,40/48,41/49,42/50,43/51,44/52,45/53,46/54,47/55,48/56,49/57,50/58,51/59,52/60,53/61,54/62,55/63,56/64,57/1,58/2,59/3,60/4,61/5,62/6,63/7,64/8}
  \path (a.corner \y) edge (a.corner \z);
\end{tikzpicture}
\, \, \, \longleftarrow \, \, \,\cdots
\end{equation*}
\end{ex}

We also construct an example of Kida's formula (Theorem \ref{thm:weighted_kida_formula}) for non-trivial weighted graphs with $d>1$.
\begin{ex}\label{ex:kida}
Let $p=2$ and $d=2$. Let $X$ be a bouquet with four loops. We assign weights of $a_1$, $a_2$, $a_3$, and $a_4$, starting from the inner loop such that $a_1$ and $a_2$ are not equal to $0$. Then the weighted matrix is $\bm{W}(X)= \begin{pmatrix} a_1+a_2+a_3+a_4 
\end{pmatrix}$. Let $S=\{s_1,s_2,s_3,s_4\}$ be an orientation and $\alpha:S\to \Z^2_2$ a voltage assignment defined by $\alpha(s_1)=(0,1)$, $\alpha(s_2)=(1,0)$, $\alpha(s_3)=(0,0)$, and $\alpha(s_4)=(0,0)$. 

Then we have 
\begin{align}
\bm{D}^W(X)= \begin{pmatrix}
2(a_1+a_2+a_3+a_4) \\
\end{pmatrix}, \bm{B}_{X,\alpha}^W= \begin{pmatrix}
a_1+a_2+a_3+a_4 \\
\end{pmatrix},
\end{align}
and 
\begin{align}
\bm{P}_{X,\alpha}^W= \begin{pmatrix}
a_1X_1+a_2X_2+a_3X_3+a_4X_3+a_1Y_1+a_2Y_2+a_3Y_3+a_4Y_3 \\
\end{pmatrix}. 
\end{align}

Thus, the matrix $\bm{M}^W_{X,\alpha}$ is equal to $-(aX_1+a_2X_2+a_3X_3+a_4X_3+aY_1+a_2Y_2+a_3Y_3+a_4Y_3),$ and so we have $$P_{X,\alpha}^{W}=-( a_1X_1+a_2X_2+a_3X_3+a_4X_3+a_1Y_1+a_2Y_2+a_3Y_3+a_4Y_3) .$$ Therefore, since $a_1\neq 0$ and $a_2\neq 0$, we obtain $$Q_{X,\alpha}^W=2^{\mu}(a_1'((1+T_1)-1 )+a_2'((1+T_2)-1 )+a_1'((1+T_1)^{-1}-1 )+a_2'((1+T_2)^{-1} -1 )), $$  where $\mu(X,\alpha)=\max\{v_2(a_1),v_2(a_2)\}$. As was discussed in Example \ref{ex3}, we have $\lambda(X,\alpha)=2$.
 
Now let $\beta:S\to Q_8=\langle \sigma,\tau \rangle $ be a voltage assignment defined by $\beta(s_1)=1_{Q_8} ,\beta(s_2)=1_{Q_8}, \beta(s_3)=\sigma$ and $\beta(s_4)=\tau$, where $Q_8$ is the Quaternion group with identity $1_{Q_8}$. Then the covering $\pi:Y=X(Q_8,\beta)\to X$ is a Galois cover with Galois group isomorphic to $Q_8$.
	
Considering the voltage assignment $\alpha\circ\pi_{\E}:S_Y\to \mathbb{Z}_2^2$, we obtain another $\mathbb{Z}_2^{2}$-tower. Now we have $$ \bm{D}^W(Y)=2(a_1+a_2+a_3+a_4)\bm{I}_{8}$$ 

$$\bm{B}_{Y,\alpha\circ\pi_{\E}}^W= \begin{pmatrix}
a_1+a_2 & a_3 & 0 & 0 & a_4 & 0 & 0 & 0 \\

0 & a_1+a_2 & a_3 & 0 & 0 & a_4 & 0 & 0 \\

0 & 0 & a_1+a_2 & a_3 & 0 & 0 & a_4 & 0 \\ 

a_3 & 0 & 0 & a_1+a_2 & 0 & 0 & 0 & a_4 \\

0 & 0 & a_4 & 0 & a_1+a_2 & 0 & 0 & a_3 \\

0 & 0 & 0 & a_4 & a_3 & a_1+a_2 & 0 & 0 \\

a_4 & 0 & 0 & 0 & 0 & a_3 & a_1+a_2 & 0 \\

0 & a_4 & 0 & 0 & 0 & 0 & a_3 & a_1+a_2 \\

\end{pmatrix}, $$

$$\bm{P}_{Y,\alpha\circ\pi_{\E}}= \begin{pmatrix}
\mathcal{R} & a_3X_3 & 0 & a_3Y_3 & a_4X_3 & 0 & a_4Y_3 & 0 \\

a_3Y_3 & \mathcal{R} & a_3X_3 & 0 & 0 & a_4X_3 & 0 & a_4Y_3 \\

0 & a_3Y_3 & \mathcal{R} & a_3X_3 & a_4Y_3 & 0 & a_4X_3 & 0 \\

a_3X_3 & 0 & a_3Y_3 & \mathcal{R} & 0 & a_4Y_3 & 0 & a_4X_3 \\ 

a_4Y_4 & 0 & a_4X_3 & 0 & \mathcal{R} & a_3Y_3 & 0 & a_3X_3 \\

0 & a_4Y_3 & 0 & a_4X_3 & a_3X_3 & \mathcal{R} & a_3Y_3 & 0 \\

a_4X_3 & 0 & a_4Y_3 & 0 & 0 & a_3X_3 & \mathcal{R} & a_3Y_3 \\

0 & a_4X_3 & 0 & a_4Y_3 & a_3Y_3 & 0 & a_3X_3 & \mathcal{R} \\

\end{pmatrix},$$where we put $\mathcal{R}:=a_1X_1+a_2X_2+a_1Y_1+a_2Y_2$. By considering the situation where $\mu(X,\alpha)=\max\{v_2(a_1),v_2(a_2)\}=0$ and by using SageMath \cite{sage}, we can check that the polynomial $P_{Y,\alpha\circ\pi_{\E}}^W(X_1,X_2,X_3,Y_1,Y_2,Y_3)$ is equal to $$\overline{(a_1X_1+a_2X_2+a_3X_3+a_4X_3+a_1Y_1+a_2Y_2+a_3Y_3+a_4Y_3)}^8 \text{ in }\mathbb{F}_2[ X_1,X_2,X_3,Y_1,Y_2,Y_3].$$
Therefore, we have
\[
\overline{(Q_{Y,\alpha\circ\pi_{\E}}^W)}_0=\overline{((a_1((1+T_1)-1 )+a_2((1+T_2)-1 )+a_1((1+T_1)^{-1}-1)+a_2((1+T_2)^{-1} -1 )))}^8
\]
Accordingly, we obtain $\mu(Y,\alpha\circ\pi_{\E})=0$ and $\lambda(Y,\alpha\circ\pi_{\E})=16$.

Consequently, we have witnessed that Kida's formula (Theorem \ref{thm:weighted_kida_formula}) $$\lambda(Y,\alpha\circ\pi_{\E})=[Y:X]\lambda(X,\alpha)$$
actually holds for these covers.

\begin{equation}
\begin{tikzcd}
 \begin{tikzpicture}[baseline={([yshift=-0.6ex] current bounding box.center)}] 
\node[draw=none,minimum size=2cm,regular polygon,regular polygon sides=8] (a) {};

\foreach \x in {1,2,...,8}
  \fill (a.corner \x) circle[radius=0.7pt];
  
\foreach \y\z in {1/2,1/5,1/7,1/8,2/3,2/4,2/6,8/4,8/3,8/6,3/5,3/7,7/4,7/6,6/5,4/5}
  \path (a.corner \y) edge (a.corner \z);
 
 \foreach \w in {1,2,...,8} 
\draw (a.corner \w) to [in=50,out=130,distance = 0.2cm,loop] (a.corner \w);

\foreach \w in {1,2,...,8} 
\draw (a.corner \w) to [in=50,out=130,distance = 0.3cm,loop] (a.corner \w);
 
\end{tikzpicture}\arrow[d]  & \begin{tikzpicture}[baseline={([yshift=-0.6ex] current bounding box.center)}] 
\node[draw=none,minimum size=2cm,regular polygon,rotate =11.25,regular polygon sides=32] (a) {};

\foreach \x in {1,2,...,32}
  \fill (a.corner \x) circle[radius=0.7pt];
  
\foreach \y\z in {1/2,2/3,3/4,4/1,5/6,6/7,7/8,8/5,9/10,10/11,11/12,12/9,13/14,14/15,15/16,16/13,17/18,18/19,19/20,20/17,21/22,22/23,23/24,24/21,25/26,26/27,27/28,28/25,29/30,30/31,31/32,32/29}
  \path (a.corner \y) edge (a.corner \z); 
  
\foreach \y\z in {1/5,2/6,3/7,4/8,9/13,10/14,11/15,12/16,17/21,18/22,19/23,20/24,25/29,26/30,27/31,28/32}
  \path (a.corner \y) edge (a.corner \z); 
   
\foreach \y\z in {1/7,2/8,3/5,4/6,9/15,10/16,11/13,12/14,17/23,18/24,19/21,20/22,25/31,26/32,27/29,28/30}
  \path (a.corner \y) edge (a.corner \z); 

\foreach \y\z in {1/9,2/10,3/11,4/12,5/13,6/14,7/15,8/16,17/25,18/26,19/27,20/28,21/29,22/30,23/31,24/32}
  \path (a.corner \y) edge [bend left=15] (a.corner \z);    

\foreach \y\z in {1/9,2/10,3/11,4/12,5/13,6/14,7/15,8/16,17/25,18/26,19/27,20/28,21/29,22/30,23/31,24/32}
  \path (a.corner \y) edge [bend right=15] (a.corner \z);      
  
\foreach \y\z in {1/17,2/18,3/19,4/20,5/21,6/22,7/23,8/24}
  \path (a.corner \y) edge [bend left=15] (a.corner \z);    

\foreach \y\z in {1/17,2/18,3/19,4/20,5/21,6/22,7/23,8/24}
  \path (a.corner \y) edge [bend right=15] (a.corner \z);      
  
\foreach \y\z in {9/25,10/26,11/27,12/28,13/29,14/30,15/31,16/32}
  \path (a.corner \y) edge [bend left=15] (a.corner \z);    

\foreach \y\z in {9/25,10/26,11/27,12/28,13/29,14/30,15/31,16/32}
  \path (a.corner \y) edge [bend right=15] (a.corner \z);        

\end{tikzpicture} \arrow[l]\arrow[d]  & \begin{tikzpicture}[baseline={([yshift=-0.6ex] current bounding box.center)}] 
\node[draw=none,minimum size=2cm,regular polygon,regular polygon sides=128] (a) {};

\foreach \x in {1,2,...,128}
  \fill (a.corner \x) circle[radius=0.3pt];
  
  \foreach \y\z in {1/9,2/10,3/11,4/12,5/13,6/14,7/15,8/16,9/17,10/18,11/19,12/20,13/21,14/22,15/23,16/24,17/25,18/26,19/27,20/28,21/29,22/30,23/31,24/32,25/1,26/2,27/3,28/4,29/5,30/6,31/7,32/8}
  \path (a.corner \y) edge (a.corner \z); 

\foreach \y\z in {33/41,34/42,35/43,36/44,37/45,38/46,39/47,40/48,41/49,42/50,43/51,44/52,45/53,46/54,47/55,48/56,49/57,50/58,51/59,52/60,53/61,54/62,55/63,56/64,57/33,58/34,59/35,60/36,61/37,62/38,63/39,64/40}
  \path (a.corner \y) edge (a.corner \z); 

\foreach \y\z in {65/73,66/74,67/75,68/76,69/77,70/78,71/79,72/80,73/81,74/82,75/83,76/84,77/85,78/86,79/87,80/88,81/89,82/90,83/91,84/92,85/93,86/94,87/95,88/96,89/65,90/66,91/67,92/68,93/69,94/70,95/71,96/72}
  \path (a.corner \y) edge (a.corner \z);   
  
  \foreach \y\z in {97/105,98/106,99/107,100/108,101/109,102/110,103/111,104/112,105/113,106/114,107/115,108/116,109/117,110/118,111/119,112/120,113/121,114/122,115/123,116/124,117/125,118/126,119/127,120/128,121/97,122/98,123/99,124/100,125/101,126/102,127/103,128/104}
  \path (a.corner \y) edge (a.corner \z);   
\foreach \x in {1,2,...,128}
  \fill (a.corner \x) circle[radius=0.3pt];
  
  \foreach \y\z in {1/33,2/34,3/35,4/36,5/37,6/38,7/39,8/40}
  \path (a.corner \y) edge (a.corner \z);
  
  \foreach \y\z in {9/41, 10/42, 11/43, 12/44, 13/45, 14/46, 15/47, 16/48}
  \path (a.corner \y) edge (a.corner \z);
  
  \foreach \y\z in {17/49, 18/50, 19/51, 20/52, 21/53, 22/54, 23/55, 24/56}
  \path (a.corner \y) edge (a.corner \z);
  
  \foreach \y\z in {25/57, 26/58, 27/59, 28/60, 29/61, 30/62, 31/63, 32/64}
  \path (a.corner \y) edge (a.corner \z);
  
  \foreach \y\z in {33/65, 34/66, 35/67, 36/68, 37/69, 38/70, 39/71, 40/72}
  \path (a.corner \y) edge (a.corner \z);
  
  \foreach \y\z in {41/73, 42/74, 43/75, 44/76, 45/77, 46/78, 47/79, 48/80}
  \path (a.corner \y) edge (a.corner \z);
  
  \foreach \y\z in {49/81, 50/82, 51/83, 52/84, 53/85, 54/86, 55/87, 56/88}
  \path (a.corner \y) edge (a.corner \z);
  
  \foreach \y\z in {57/89, 58/90, 59/91, 60/92, 61/93, 62/94, 63/95, 64/96}
  \path (a.corner \y) edge (a.corner \z);
  
  \foreach \y\z in {65/97, 66/98, 67/99, 68/100, 69/101, 70/102, 71/103,72/104}
  \path (a.corner \y) edge (a.corner \z);
  
  \foreach \y\z in {73/105, 74/106, 75/107, 76/108, 77/109, 78/110, 79/111, 80/112}
  \path (a.corner \y) edge (a.corner \z);
  
  \foreach \y\z in {81/113, 82/114, 83/115, 84/116, 85/117, 86/118, 87/119, 88/120}
  \path (a.corner \y) edge (a.corner \z);
  
  \foreach \y\z in {89/121, 90/122, 91/123, 92/124, 93/125, 94/126, 95/127, 96/128}
  \path (a.corner \y) edge (a.corner \z);
  
  \foreach \y\z in {97/1,98/2,99/3,100/4,101/5,102/6,103/7,104/8}
  \path (a.corner \y) edge (a.corner \z);
  
  \foreach \y\z in {105/9, 106/10, 107/11, 108/12, 109/13, 110/14, 111/15, 112/16}
  \path (a.corner \y) edge (a.corner \z);
  
  \foreach \y\z in {113/17, 114/18, 115/19, 116/20, 117/21, 118/22, 119/23, 120/24}
  \path (a.corner \y) edge (a.corner \z);
  
  \foreach \y\z in {121/25, 122/26, 123/27, 124/28, 125/29, 126/30, 127/31, 128/32}
  \path (a.corner \y) edge (a.corner \z);
\foreach \y\z in {1/2,2/3,3/4,4/1,5/6,6/7,7/8,8/5,9/10,10/11, 11/12, 12/9,13/14, 14/15, 15/16, 16/13,17/18, 18/19, 19/20, 20/17,21/22, 22/23, 23/24, 24/21,25/26, 26/27, 27/28, 28/25,29/30, 30/31, 31/32, 32/29,33/34, 34/35, 35/36, 36/33,37/38, 38/39, 39/40, 40/37,41/42, 42/43, 43/44, 44/41,45/46, 46/47, 47/48, 48/45,49/50, 50/51, 51/52, 52/49,53/54, 54/55, 55/56, 56/53,57/58, 58/59, 59/60, 60/57,61/62, 62/63, 63/64, 64/61,65/66, 66/67, 67/68, 68/65,69/70, 70/71, 71/72, 72/69,73/74, 74/75, 75/76, 76/73,77/78, 78/79, 79/80, 80/77,81/82, 82/83, 83/84, 84/81,85/86, 86/87, 87/88, 88/85,89/90, 90/91, 91/92, 92/89,93/94, 94/95, 95/96, 96/93,97/98, 98/99, 99/100, 100/97,101/102, 102/103, 103/104, 104/101,105/106, 106/107, 107/108, 108/105,109/110, 110/111, 111/112, 112/109,113/114, 114/115, 115/116, 116/113,117/118, 118/119, 119/120, 120/117,121/122, 122/123, 123/124, 124/121,125/126, 126/127, 127/128, 128/125}
  \path (a.corner \y) edge (a.corner \z); 
\foreach \y\z in {1/5,2/6,3/7,4/8,1/7,2/8,3/5,4/6, 9/13, 10/14, 11/15, 12/16, 9/15, 10/16, 11/13, 12/14,17/21, 18/22, 19/23, 20/24, 17/23, 18/24, 19/21, 20/22,25/29, 26/30, 27/31, 28/32, 25/31, 26/32, 27/29, 28/30,33/37, 34/38, 35/39, 36/40, 33/39, 34/40, 35/37, 36/38,41/45, 42/46, 43/47, 44/48, 41/47, 42/48, 43/45, 44/46,49/53, 50/54, 51/55, 52/56, 49/55, 50/56, 51/53, 52/54,57/61, 58/62, 59/63, 60/64, 57/63, 58/64, 59/61, 60/62,65/69, 66/70, 67/71, 68/72, 65/71, 66/72, 67/69, 68/70,73/77, 74/78, 75/79, 76/80, 73/79, 74/80, 75/77, 76/78,81/85, 82/86, 83/87, 84/88, 81/87, 82/88, 83/85, 84/86,89/93, 90/94, 91/95, 92/96, 89/95, 90/96, 91/93, 92/94,97/101, 98/102, 99/103, 100/104, 97/103, 98/104, 99/101, 100/102,105/109, 106/110, 107/111, 108/112, 105/111, 106/112, 107/109, 108/110,113/117, 114/118, 115/119, 116/120, 113/119, 114/120, 115/117, 116/118,121/125, 122/126, 123/127, 124/128, 121/127, 122/128, 123/125, 124/126}
  \path (a.corner \y) edge (a.corner \z); 
\end{tikzpicture}  \arrow[d]\arrow[l]  & {}\cdots \arrow[l]   \\  
  \begin{tikzpicture}[baseline={([yshift=-1.7ex] current bounding box.center)}]
\node[draw=none,minimum size=2cm,regular polygon,regular polygon sides=1] (a) {};
\foreach \x in {1}
  \fill (a.corner \x) circle[radius=0.7pt];
\draw (a.corner 1) to [in=50,out=130,distance = 1.2cm,loop] (a.corner 1);
\draw (a.corner 1) to [in=50,out=130,distance = 0.8cm,loop] (a.corner 1);
\draw (a.corner 1) to [in=50,out=130,distance = 0.6cm,loop] (a.corner 1);
\draw (a.corner 1) to [in=50,out=130,distance = 0.3cm,loop] (a.corner 1);
\end{tikzpicture}  &  \begin{tikzpicture}[baseline={([yshift=-0.6ex] current bounding box.center)}] 
\node[draw=none,minimum size=2cm,regular polygon,regular polygon sides=4] (a) {};

\foreach \x in {1,2,...,4}
  \fill (a.corner \x) circle[radius=0.7pt];

\path (a.corner 1) edge [bend left=10] (a.corner 2);
\path (a.corner 1) edge [bend right=10] (a.corner 2);

\path (a.corner 3) edge [bend left=10] (a.corner 4);
\path (a.corner 3) edge [bend right=10] (a.corner 4);

\path (a.corner 1) edge [bend left=10] (a.corner 3);
\path (a.corner 1) edge [bend right=10] (a.corner 3);

\path (a.corner 2) edge [bend left=10] (a.corner 4);
\path (a.corner 2) edge [bend right=10] (a.corner 4); 

\foreach \w in {1,2,...,4} 
\draw (a.corner \w) to [in=50,out=130,distance = 0.2cm,loop] (a.corner \w);

\foreach \w in {1,2,...,4} 
\draw (a.corner \w) to [in=50,out=130,distance = 0.3cm,loop] (a.corner \w);

\end{tikzpicture}\arrow[l] &  \begin{tikzpicture}[baseline={([yshift=-0.6ex] current bounding box.center)}] 
\node[draw=none,minimum size=2cm,regular polygon,rotate = 22.5,regular polygon sides=16] (a) {};

\foreach \x in {1,2,...,16}
  \fill (a.corner \x) circle[radius=0.7pt];

\foreach \y\z in {1/5,2/6,3/7,4/8,5/9,6/10,7/11,8/12,9/13,10/14,11/15,12/16,13/1,14/2,15/3,16/4}
  \path (a.corner \y) edge (a.corner \z);
  
\foreach \y\z in {1/2,2/3,3/4,5/6,6/7,7/8,9/10,10/11,11/12,13/14,14/15,15/16}
  \path (a.corner \y) edge (a.corner \z);
  
\foreach \y\z in {1/4,5/8,9/12,13/16}
  \path (a.corner \y) edge (a.corner \z);
  
\foreach \w in {1,2,...,16} 
\draw (a.corner \w) to [in=50,out=130,distance = 0.2cm,loop] (a.corner \w);

\foreach \w in {1,2,...,16} 
\draw (a.corner \w) to [in=50,out=130,distance = 0.3cm,loop] (a.corner \w);

\end{tikzpicture} \arrow[l]  &  \cdots\arrow[l]  {}  \\     
\end{tikzcd}
\end{equation}
\end{ex}
\begin{rem}
We can also construct examples of Kida's formula when $G$ is the dihedral group $D_4$ of order $8$ by discussing exactly the same way as Example \ref{ex:kida}. It is still wrapped in mystery for us whether there exists a $\Z_p^d$-tower with non-zero $Q^W(T_1,\ldots,T_d)$ such that $Q^W(\bm{\zeta}-\bm{1})=0$ for some $n\geq0$ and $\bm{\zeta}\in W_n^d$.
\end{rem}

\section{Application to quantum walks}\label{sec:quantum_walks}
In this section, To construct a first step of application of the ideas of Iwasawa theory combined with the weighted graph theory, we establish an approximation theorem on the values of the characteristic polynomials of the transition matrices of discrete-time quantum walks in a tower of graphs.

Let $X$ be a symmetric digraph with vertices $V(X)=\{v_1,\ldots,v_m\}$ and set of directed edges $\mathbb{E}(X)=\{e_1,\ldots,e_l,e_{l+1}=\bar{e}_1,\ldots,e_{2l}=\bar{e}_l\}$. For a vertex $v$ of $X$, put $d_v:=\#\E_{X,v}$.
\begin{defi}
For each $1\leq i,j\leq 2l$, put
\[
u_{ij}:=\begin{cases}
2/d_{o(e_i)}(=2/d_{t(e_j)})&\mbox{if }o(e_i)=t(e_j),\ \bar{e}_i\neq e_j,\\
2/d_{t(e_i)}-1&\mbox{if }\bar{e}_i=e_j,\\
0&\mbox{otherwise}.
\end{cases}
\]
The matrix $\bm{U}:=(u_{ij})_{ij}$ is called the \emph{transition matrix in a discrete-time quantum walk in $X$}. 
\end{defi}
The transition matrices of discrete-time quantum walks in graphs are closely related to the Ihara zeta functions of weighted graphs. Equip a weight $w:\mathbb{E}(X)\ni e\mapsto 2/d_{o(e)}\in\Q$ with $X$. Then we have
\[
\zeta(X, t)^{-1}=\det(\bm{I}_{2l}-t\bm{U}).
\]
Making use of this equality, Konno and Sato proved the celebrated Konno--Sato theorem, which has the following application to calculate the eigenvalues of $\bm{U}$.

\begin{thm}[{\cite[Proposition 2.2]{EHSW06}}, {\cite[Corollary 4.2]{KS12}}]\label{corofKS}
$\bm{U}$ has $2l$ eigenvalues of the forms
\[
a=\frac{1}{2}a_{\bm{W}}\pm\frac{1}{2}i\sqrt{4-a_{\bm{W}}^2}\in\bar{\Q},
\]
where $a_{\bm{W}}$ is an eigenvalue of $\bm{W}(X)$. The remaining $2(l-m)$ eigenvalues of $\bm{U}$ are $\pm 1$ with equal multiplicities.
\end{thm}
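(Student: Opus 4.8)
The plan is to recover the full spectrum of $\bm{U}$ by feeding the relation $\zeta(X,t)^{-1}=\det(\bm{I}_{2l}-t\bm{U})$ recorded above into the three-term determinant formula and then factoring the resulting characteristic polynomial. The single content-bearing observation is that, for the weight $w:\mathbb{E}(X)\ni e\mapsto 2/d_{o(e)}$, the diagonal matrix $\bm{D}^W(X)$ degenerates to a scalar matrix: for each vertex $v_i$ one has $\sum_{o(e)=v_i}w(e)=d_{v_i}\cdot(2/d_{v_i})=2$, so $\bm{D}^W(X)=2\bm{I}_m$ and hence $\bm{D}^W(X)-\bm{I}_m=\bm{I}_m$.

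First I would apply Theorem \ref{thm:weighted_L_3term} with the trivial group, the trivial representation $1_{\mathrm{rep}}$, and the trivial voltage assignment $1_{\mathrm{vol}}$, so that the matrix $\sum_{\sigma}\bm{W}(\sigma)\bigotimes\rho(\sigma)$ reduces to $\bm{W}(X)$ and $d_\rho=1$. Using $\chi(X)=m-l$ together with the scalar collapse above, the formula gives
\[
\det(\bm{I}_{2l}-t\bm{U})=\zeta(X,t)^{-1}=(1-t^2)^{l-m}\det\bigl((1+t^2)\bm{I}_m-t\bm{W}(X)\bigr).
\]

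Next I would pass to the characteristic polynomial of $\bm{U}$ through $\det(a\bm{I}_{2l}-\bm{U})=a^{2l}\det(\bm{I}_{2l}-a^{-1}\bm{U})$, substituting $t=1/a$; this is valid for $a\neq0$ and then extends to all $a$ because both sides are polynomials. After clearing powers of $a$, the factor $(1-t^2)^{l-m}$ turns into $(a^2-1)^{l-m}$, while each eigenvalue $a_{\bm{W}}$ of the (symmetric, hence diagonalizable) matrix $\bm{W}(X)$ contributes a factor $a^2-a_{\bm{W}}a+1$, yielding
\[
\det(a\bm{I}_{2l}-\bm{U})=(a^2-1)^{l-m}\prod_{k=1}^{m}\bigl(a^2-a_{\bm{W}}^{(k)}a+1\bigr),
\]
where $a_{\bm{W}}^{(1)},\ldots,a_{\bm{W}}^{(m)}$ are the eigenvalues of $\bm{W}(X)$; a degree count $2(l-m)+2m=2l$ confirms that no eigenvalue is lost. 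Reading off the roots then completes the proof: the factor $(a^2-1)^{l-m}$ supplies the eigenvalues $\pm1$, each of multiplicity $l-m$, which are the remaining $2(l-m)$ eigenvalues with equal multiplicities, and for each eigenvalue $a_{\bm{W}}$ of $\bm{W}(X)$ the quadratic $a^2-a_{\bm{W}}a+1=0$ has the two roots $a=\tfrac12 a_{\bm{W}}\pm\tfrac12 i\sqrt{4-a_{\bm{W}}^2}$.

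I do not anticipate a deep obstacle, as the three-term determinant formula carries the essential weight; the only steps demanding care are verifying the scalar collapse $\bm{D}^W(X)=2\bm{I}_m$ (which is precisely what aligns the $2l\times 2l$ and $m\times m$ determinants) and checking that substituting $t=1/a$ produces an identity of polynomials rather than merely of rational functions.
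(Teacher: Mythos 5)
Your proof is correct. The paper itself gives no proof of this statement (it is quoted from \cite[Proposition 2.2]{EHSW06} and \cite[Corollary 4.2]{KS12}), but your derivation is exactly the mechanism the paper relies on in the proof of Theorem \ref{thm:quantumwalk}: combine $\det(\bm{I}_{2l}-t\bm{U})=\zeta(X,t)^{-1}$ with the three-term determinant formula for the weight $w(e)=2/d_{o(e)}$, observe the scalar collapse $\bm{D}^W(X)=2\bm{I}_m$, substitute $t=1/a$, and factor. The identity $\det(a\bm{I}_{2l}-\bm{U})=(a^2-1)^{l-m}\prod_{k}(a^2-a_{\bm{W}}^{(k)}a+1)$ and the degree count are right, and your care about extending from $a\neq 0$ to a polynomial identity is appropriate. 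One small inaccuracy: with this weight, $\bm{W}(X)=2\bm{D}^{-1}\bm{A}$ is \emph{not} symmetric unless $X$ is regular (it is only similar to a symmetric matrix via conjugation by $\bm{D}^{1/2}$), so the parenthetical ``symmetric, hence diagonalizable'' is wrong as stated; but it is also unnecessary, since $\det\bigl((1+t^2)\bm{I}_m-t\bm{W}(X)\bigr)=\prod_k\bigl(1+t^2-t\,a_{\bm{W}}^{(k)}\bigr)$ follows from factoring the characteristic polynomial over $\bar{\Q}$ for an arbitrary matrix, with no diagonalizability needed.
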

By Theorem \ref{corofKS}, one can calculate the eigenvalues of $\bm{U}$. While Theorem \ref{corofKS} is about the eigenvalues of $\bm{U}$, our following Iwasawa theoretical theorem is about the values of the characteristic polynomial $\det(u\bm{I}_{2l_n}-\bm{U})$ when a non-eigenvalue $a\in\bar{\Q}_p$ is substituted (recall that we have been fixing an embedding $\bar{\Q}\hookrightarrow\bar{\Q}_p$). This can be regarded as a graph discrete-time quantum walk analogue of Greenberg's class number conjecture (Conjecture \ref{conj:Greenberg}). Note that our weight function is compatible with the locally isomorphism part of the definition of (unramified) covers of weighted symmetric digraphs (Definition \ref{def:cover} (2)).

\begin{thm}\label{thm:quantumwalk}
Let $X$ be a symmetric digraph. Let $\alpha:\mathbb{E}(X)\to\Gamma$ be a voltage assignment. For each $n\geq 0$, let $\alpha_n:\mathbb{E}(X)\to\Gamma_n$ be the voltage assignment obtained by composing $\alpha$ and $n$-th projection map $\Gamma\to\Gamma_n$. Let $X_n:=X(\Gamma_n,\alpha_n)$ and let $2l_n$ denote the number of edges of $X_n$. Let $\bm{U}_n$ be the transition matrix of $X_n$. Let $a$ be an element of $\bar{\Q}_p$ such that, for every $n\geq 0$, $a$ is not an eigenvalue of $\bm{U}_n$. Let $K:=\Q_p(a)$ and define
\[
Q_a^W(T_1,\ldots,T_d):=\det(a^2\bm{I}_m-a\bm{W}_{\tau,\alpha}+(\bm{D}^W(X)-\bm{I}_m))\in\Lambda_{K,d}.
\]
Then there exist $\mu_1,\ldots,\mu_{d-1},\lambda_1,l\ldots,\lambda_{d-1},\nu\in\Q$ such that, for all $n\gg0$,
\begin{equation}\label{eq:quantumwalk}
v_p(\det(a\bm{I}_{2l_n}-\bm{U}_n))=(\mu p^n+\lambda n)p^{(d-1)n}+\left(\sum_{i=1}^{d-1}(\mu_ip^n+\lambda_in)p^{(d-i-1)n}\right)+\nu,
\end{equation}
where $\mu=\mu(Q_a^W)-\chi(X)v_p(a^2-1)$ and $\lambda=\lambda(Q_a^W)$.
\end{thm}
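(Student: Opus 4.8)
The plan is to express $\det(a\bm{I}_{2l_n}-\bm{U}_n)$ through the weighted Ihara zeta function of $X_n$ and then reduce to the Cuoco--Monsky machinery of Theorem \ref{thm:multi_class_number}, mirroring the proof of Theorem \ref{thm:Iwasawa_Type}. Equip $X$ with the weight $w(e):=2/d_{o(e)}$, so that the identity recalled above between the transition matrix and the weighted zeta function reads $\det(\bm{I}_{2l_n}-t\bm{U}_n)=\zeta(X_n,t)^{-1}$ for every $n$ (the vertex degrees, hence $w$, are preserved under the covering $X_n=X(\Gamma_n,\alpha_n)\to X$). Since $a\neq0$, I would write $\det(a\bm{I}_{2l_n}-\bm{U}_n)=a^{2l_n}\det(\bm{I}_{2l_n}-a^{-1}\bm{U}_n)=a^{2l_n}\zeta(X_n,a^{-1})^{-1}$ and apply the factorization $\zeta(X_n,t)=\prod_{\psi\in\widehat{\Gamma}_n}L(X,t,\psi,\alpha_n)$ of Theorem \ref{thm:Sato_prod} (every irreducible representation of the abelian group $\Gamma_n$ is a character), obtaining $\det(a\bm{I}_{2l_n}-\bm{U}_n)=a^{2l_n}\prod_{\psi\in\widehat{\Gamma}_n}L(X,a^{-1},\psi,\alpha_n)^{-1}$.

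Next I would insert the three-term determinant formula (Theorem \ref{thm:weighted_L_3term}) into each degree-one factor, giving $L(X,a^{-1},\psi,\alpha_n)^{-1}=(1-a^{-2})^{-\chi(X)}h_X(\psi,a^{-1},\alpha_n)$, and clear the negative powers of $a$: multiplying the $m\times m$ determinant by $a^{2m}$ and using the evaluation $\bm{W}_{\tau,\alpha}(\bm{\zeta_{\psi_n}}-\bm{1})=\bm{W}_{\psi_n}$ of \eqref{eq:key} yields $a^{2m}h_X(\psi,a^{-1},\alpha_n)=\det(a^2\bm{I}_m-a\bm{W}_{\psi_n}+(\bm{D}^W(X)-\bm{I}_m))=Q_a^W(\bm{\zeta_{\psi_n}}-\bm{1})$. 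Collecting the $p^{dn}$ characters and using $2l_n=2lp^{dn}$ together with $\chi(X)=m-l$, the prefactors combine as $a^{2l_n}\cdot(1-a^{-2})^{-\chi(X)p^{dn}}\cdot a^{-2mp^{dn}}$; since $(1-a^{-2})^{-\chi(X)p^{dn}}=(a^2-1)^{-\chi(X)p^{dn}}a^{2\chi(X)p^{dn}}$, the total power of $a$ is $2(l+\chi(X)-m)p^{dn}=0$, so every power of $a$ cancels and I am left with the clean identity
\[
\det(a\bm{I}_{2l_n}-\bm{U}_n)=(a^2-1)^{-\chi(X)p^{dn}}\prod_{\psi\in\widehat{\Gamma}_n}Q_a^W(\bm{\zeta_{\psi_n}}-\bm{1}).
\]

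Taking $v_p$ then splits the count into three pieces: the elementary term $-\chi(X)v_p(a^2-1)\,p^{dn}$ (here $a^2\neq1$ whenever $\chi(X)\neq0$, since $\pm1$ are eigenvalues of $\bm{U}_n$ by Theorem \ref{corofKS}, while for $\chi(X)=0$ the term is vacuous), the constant $v_p(Q_a^W(\bm{0}))$ from the trivial character, and $\sum_{\bm{\zeta}\in W_n^d\setminus\{(1,\ldots,1)\}}v_p(Q_a^W(\bm{\zeta}-\bm{1}))$. To this last sum I would apply Theorem \ref{thm:multi_class_number}: one verifies that $Q_a^W\in\Lambda_d\otimes_{\mathcal{O}}K$ satisfies \eqref{semipowerseries} by inspecting the entries of $a^2\bm{I}_m-a\bm{W}_{\tau,\alpha}+(\bm{D}^W(X)-\bm{I}_m)$, and that the hypothesis that $a$ is never an eigenvalue of $\bm{U}_n$ forces $Q_a^W$ to be non-vanishing on $W^d$ (by the displayed identity $a$ is an eigenvalue of $\bm{U}_n$ exactly when $Q_a^W$ vanishes at some $\bm{\zeta_{\psi_n}}-\bm{1}$). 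Merging the two leading $p^{dn}$ contributions gives $\mu=\mu(Q_a^W)-\chi(X)v_p(a^2-1)$ and $\lambda=\lambda(Q_a^W)$, with $v_p(Q_a^W(\bm{0}))$ absorbed into $\nu$, which is the asserted formula. The main obstacle I anticipate is purely the bookkeeping of the $(1-a^{-2})$-factors and the powers of $a$ so that the cancellation is exact, together with the clean translation of the non-eigenvalue hypothesis into the non-vanishing of $Q_a^W$ on $W^d$ demanded by Theorem \ref{thm:multi_class_number}; once these are in place the remainder is a direct transcription of the argument in \S\ref{sec:Iwasawa_type}.
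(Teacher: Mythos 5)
Your proposal is correct and follows essentially the same route as the paper: both establish the identity $\det(a\bm{I}_{2l_n}-\bm{U}_n)=(a^2-1)^{-p^{dn}\chi(X)}\prod_{\psi_n\in\widehat{\Gamma}_n}Q_a^W(\bm{\zeta_{\psi_n}}-\bm{1})$ (you via Theorem \ref{thm:Sato_prod} plus the three-term formula with explicit power-of-$a$ bookkeeping, the paper by citing the factorization of $\zeta(X_n,t)^{-1}$ and substituting $u=1/t$), then take $v_p$, peel off the trivial character, and invoke Theorem \ref{thm:multi_class_number}. Your extra verifications (the exact cancellation of the powers of $a$, condition \eqref{semipowerseries}, and the translation of the non-eigenvalue hypothesis into non-vanishing on $W^d$) are exactly the "easy calculation" steps the paper leaves implicit.
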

\begin{proof}
By the proof of \cite[Theorem 4.1]{KS12}, we have
\[
\zeta(X_n, t)^{-1}=\det(\bm{I}_{2l_n}-t\bm{U}_n).
\]
By \cite[Corollary 1]{MS04}(or for non-simple cases \cite[Theorem 6]{Sat07}), we have
\[
\zeta(X_n,t)^{-1}=(1-t^2)^{-p^{dn}\chi(X)}\prod_{{\psi_n}\in\widehat{\Gamma}_n}h_X({\psi_n},t,\alpha_n).
\]
Therefore, we have
\[
\det(\bm{I}_{2l_n}-t\bm{U}_n)=(1-t^2)^{-p^{dn}\chi(X)}\prod_{{\psi_n}\in\widehat{\Gamma}_n}h_X({\psi_n},t,\alpha_n).
\]
Put $u:=\frac{1}{t}$. Then, by an easy calculation, we have
\[
\det(u\bm{I}_{2l_n}-\bm{U}_n)=(u^2-1)^{-p^{dn}\chi(X)}\prod_{{\psi_n}\in\widehat{\Gamma}_n}\det(u^2\bm{I}_m-u\bm{W}_{\psi_n}+(\bm{D}^W(X)-\bm{I}_m)).
\]
Since $a$ is not an eigenvalue of $\bm{U}_n$, we have $\det(u\bm{I}_{2l_n}-\bm{U}_n)\neq 0$. Therefore, we obtain
\begin{align}
&v_p(\det(a\bm{I}_{2l_n}-\bm{U}_n))\\
&=-p^{dn}\chi(X)v_p(a^2-1)+\sum_{{\psi_n}\in\widehat{\Gamma}_n}v_p(\det(a^2\bm{I}_m-a\bm{W}_{{\psi_n}}+(\bm{D}^W(X)-\bm{I}_m)))\\
&\overset{\eqref{eq:key}}{=}-p^{dn}\chi(X)v_p(a^2-1)+\sum_{{\psi_n}\in\widehat{\Gamma}_n}v_p(\det(a^2\bm{I}_m-a\bm{W}_{\tau,\alpha}(\bm{\zeta_{{\psi_n}}}-\bm{1})+(\bm{D}^W(X)-\bm{I}_m)))\\
&=-p^{dn}\chi(X)v_p(a^2-1)+\sum_{{\psi_n}\in\widehat{\Gamma}_n}v_p(Q_a^W(\bm{\zeta_{{\psi_n}}}-\bm{1}))\\
&=-p^{dn}\chi(X)v_p(a^2-1)+v_p(Q_a^W(\bm{0}))+\sum_{\bm{\zeta}\in W_n^d\backslash\{(1,1,\ldots,1)\}}v_p(Q_a^W(\bm{\zeta}-\bm{1})).
\end{align}
By Theorem \ref{thm:multi_class_number}, we complete the proof.
\end{proof}
\begin{rem}\label{rem:quantumwalk}
There exists a calculation method for $``$how large $n$ should be to make the equation \eqref{eq:quantumwalk} hold$"$ in the case $d=1$. In fact, the equation \eqref{eq:quantumwalk} holds for every $n\geq 0$ satisfying $\varphi(p^n)\geq \lambda(Q^W_a)$, where $\varphi$ stands for the Euler's totient function. See the proofs of \cite[Theorem 6.1]{MV24} and \cite[Theorem 4.1]{MV23}.
\end{rem}
Finally, we give an example of Theorem $\ref{thm:quantumwalk}$.
\begin{ex}
Consider the situation of Example \ref{ex:quantum}. In this situation, we have $\chi(X)$=0. Let $a\in\Q_2$. Then we have
\[
a\bm{I}_{3\cdot 2^n}-\bm{W}(X_n)=
\begin{pmatrix}
a&-1&0&\cdots&\cdots&0&-1\\
-1&a&-1&0&\cdots&\cdots&0\\
0&-1&a&-1&0&\cdots&0\\
\vdots&\vdots&\vdots&\ddots&&\vdots&\vdots\\
0&\cdots&0&-1&a&-1&0\\
0&\cdots&\cdots&0&-1&a&-1\\
-1&0&\cdots&\cdots&0&-1&a
\end{pmatrix}.
\]
By the theory of circulant matrices, we have
\[
\spec \bm{W}(X_n)=\{2\cos\frac{2k\pi}{3\cdot 2^n}\ |\ 0\leq k\leq 3\cdot 2^n-1\}.
\]
By Theorem \ref{corofKS}, for every $n\geq 0$, we have
\begin{eqnarray*}
\spec \bm{U}_n&=&\{\pm 1,\cos\frac{2k\pi}{3\cdot 2^n}\pm i\sin\frac{2k\pi }{3\cdot 2^n}\ |\ 0\leq k\leq 3\cdot 2^n-1\}\\
&=&\{\pm 1, \zeta_{3\cdot 2^n}^k\ |\ 0\leq k\leq 3\cdot 2^n-1\}.
\end{eqnarray*}
Let $a\in\Q_2\setminus(\bigcup_{n\geq 0}\spec \bm{U}_n)$. Then
\begin{eqnarray*}
Q_a^W(T)&=&\det(a^2\bm{I}_3-a\bm{W}_{\tau,\alpha}(X)+(\bm{D}^W(X)-\bm{I}_3))\\
&=&\det
\begin{pmatrix}
a^2+1&-a&-(1+T)^{-1}a\\
-a&a^2+1&-a\\
-(1+T)a&-a&a^2+1
\end{pmatrix}\\
&=&(a^3-1)^2-a^3T^2(1+T)^{-1}.
\end{eqnarray*}
If $v_2(a^3-1)=0$, then $a^3\in\Z_2$ and $Q^W_a(T)$ is a unit in $\Lambda_1$. Therefore, $\mu=0$ and $\lambda=0$. If $v_2(a^3-1)\geq 1$, then we have $a^3\in\Z_2$ and $v_2(a^3)=0$. Hence $\mu=0$ and $\lambda=2$. If $v_2(a^3-1)\leq -1$, then we have $2^{-2v_2(a^3-1)}Q_a^W(T)\in\Lambda_1$ and $2^{-2v_2(a^3-1)+1}Q^W_a(T)\notin\Lambda_1$. Thus $\mu=2v_2(a^3-1)$ and $\lambda=0$.

We shall witness how Theorem \ref{thm:quantumwalk} actually works more concretely. By the definition of the transition matrices $\bm{U}_n$, a simple calculation allows us to obtain
\[
\det(a\bm{I}_{3\cdot 2^n}-\bm{U}_n)=(a^{3\cdot 2^n}-1)^2.
\]

If $a=2$, then $v_2(2^3-1)=0$. In this case, for every $n\geq 0$, we obviously have 
\[
v_2(\det(2\bm{I}_{3\cdot 2^n}-\bm{U}_n))=v_2((2^{3\cdot 2^n}-1)^2)=0=0\cdot 2^n+0\cdot n+0.
\]

If $a=3$, then $v_2(3^3-1)\geq 1$. In this case, by using SageMath \cite{sage}, we have
\begin{align}
(3^3-1)^2& = 2^2\cdot13^2,\\
(3^6-1)^2&= 2^6\cdot7^2\cdot13^2, \\
(3^{12}-1)^2&= 2^8\cdot5^2\cdot7^2\cdot13^2\cdot73^2, \\
(3^{24}-1)^2&= 2^{10}\cdot5^2\cdot7^2\cdot13^2\cdot41^2\cdot73^2\cdot6481^2, \\
(3^{48}-1)^2&=2^{12}\cdot5^2\cdot7^2\cdot13^2\cdot17^2\cdot41^2\cdot73^2\cdot97^2\cdot193^2\cdot577^2\cdot769^2\cdot6481^2.
\end{align}
Hence, for $1\leq n\leq 4$, we have 
\[
v_2(\det(3\bm{I}_{3\cdot 2^n}-\bm{U}_n))=v_2((2^{3\cdot 2^n}-1)^2)= 0\cdot 2^n+2\cdot n+4.
\]
In fact, by Remark \ref{rem:quantumwalk}, we have $v_2(\det(3\bm{I}_{3\cdot 2^n}-\bm{U}_n))=0\cdot 2^n+2\cdot n+4$ for all $n\geq 1$.

If $a=1/2$, then $v_2\left(\left(\frac{1}{2}\right)^{3}-1 \right)=-3\leq -1$. In this case, for every $n\geq 0$, we have
\[
v_2\left(\det\left(\frac{1}{2}\bm{I}_{3\cdot 2^n}-\bm{U}_n\right)\right)=v_2\left(\left(\left(\frac{1}{2}\right)^{3\cdot 2^n}-1 \right)^2\right)=-6\cdot 2^n+0\cdot n+0.
\]
\end{ex}
\bibliographystyle{amsalpha}
\bibliography{AMT24.bib}
\end{document}